\newtheorem{definition}{Definition}
\newtheorem{corollary}[definition]{Corollary}
\newtheorem{lemma}[definition]{Lemma}
\newtheorem{proposition}[definition]{Proposition}
\newtheorem{assumption}[definition]{Assumption}
\newtheorem{remark}[definition]{Remark}
\newtheorem{theorem}[definition]{Theorem}
\newcommand{\R}{\mathbb{R}}
\newcommand{\ep}{\varepsilon}
\DeclareMathOperator{\sech}{sech}
\title{Computation and optimal perturbation of finite-time coherent sets for aperiodic flows without trajectory integration}
\author{Gary Froyland\thanks{School of Mathematics and Statistics, The University of New South Wales, Sydney NSW 2052, Australia
.}
\and P\'eter Koltai\thanks{Institute of Mathematics, Freie Universit\"at Berlin, 14195 Berlin, Germany.}
\and Martin Stahn\thanks{Institute of Mathematics, Universit{\"a}t Potsdam, 14476 Potsdam, Germany.}}
\begin{document}

\maketitle

\begin{abstract}
Understanding the macroscopic behavior of dynamical systems is an important tool to unravel transport mechanisms in complex  flows. A decomposition of the state space into coherent sets is a popular way to reveal this essential macroscopic evolution.  To compute coherent sets from an aperiodic time-dependent dynamical system we consider the relevant transfer operators and their infinitesimal generators on an augmented space-time manifold. This space-time generator approach avoids trajectory integration, and creates a convenient linearization of the aperiodic evolution. This linearization can be further exploited to create a simple and effective spectral optimization methodology for diminishing or enhancing coherence.  We obtain explicit solutions for these optimization problems using Lagrange multipliers and illustrate this technique by increasing and decreasing mixing of spatial regions through small velocity field perturbations.
\end{abstract}

\section{Introduction}
\label{sec:intro}

Analysing complicated flows through their transport and mixing behavior has been and still is attracting a great amount of attention~\cite{mackay1984transport, romkedar_etal_1990, wiggins_92, haller1998finite, aref_02, jones2002invariant, wiggins2005dynamical, shadden2005definition, froyland_padberg_09, Thi12, FrPa14, KaKe16, KoRe18, HaKaKo18}, both from geometric and probabilistic points of view.
Non-autonomous time-aperiodic dynamics poses additional difficulties, especially the case of finite time, where asymptotic notions cannot be applied.

The current work has two main contributions.
\begin{enumerate}[(i)]
\item
Extending the work from \cite{FK17}, that deals with the time-periodic case, to the situation of general aperiodic finite-time dynamics. We detail a method to compute finite-time coherent sets~\cite{Froyland2013} of aperiodic flows that does not require any time-integration of trajectories.
\item
A technique to find a small perturbation of the underlying aperiodic vector field in a prescribed ball in a space or subspace of vector fields, which optimally enhances or destroys the existing finite-time coherent sets. This extends optimization results in~\cite{Froyland2016}, which considered the time-periodic setup of~\cite{FK17}, to (a) aperiodic dynamics and to (b) infinite-dimensional velocity field space.
\end{enumerate}

\subsection{Augmentation}

The key construction in \cite{FK17} is the representation of a $\tau$-periodically forced flow on phase space~$X \subset \R^d$ as an autonomous flow on time-augmented phase space~$\tau S^1\times X$, where $S^1$ denotes the unit circle.
On this time-expanded phase space, the time coordinate is simply advanced at a constant rate:
\[
x'(t) = v(t, x(t)) \quad\leadsto\quad
\left\{
\begin{aligned}
\theta'(t) &= 1, \\
x'(t) &= v\big(\theta(t), x(t)\big).
\end{aligned}
\right.
\]
Finite-time coherent sets on the time interval $[0,\tau]$, as introduced in~\cite{Froyland2010b,Froyland2013}, were extracted from singular functions of the transfer operator~$\mathcal{P}_{0,\tau}$, where the transfer operator is the linear operator describing the evolution of distributions under the dynamics subject to a small random perturbation. The crucial observation is that singular modes of $\mathcal{P}_{0,\tau}$ are eigenmodes of $\mathcal{P}_{0,\tau}^*\mathcal{P}_{0,\tau}$, where $\mathcal{P}_{0,\tau}^*$ denotes the dual of~$\mathcal{P}_{0,\tau}$, and that for area-preserving dynamics (corresponding to divergence-free velocity fields~$v$) the dual is the transfer operator of the time-reversed dynamics, i.e., the (again, slightly stochastically perturbed) flow governed by the time-reflected velocity field~$(t,x)\mapsto -v(\tau-t,x)$. The dynamic interpretation of this operator-based characterization is that (finite-time) coherent are those sets that are to a large extent mapped back to themselves by a noisy forward-backward evolution of the dynamics. This operator-based framework not only gives a qualitative framework for coherence; the singular values of $\mathcal{P}_{0,\tau}$ provide \emph{quantitative} bounds for coherence~\cite{Froyland2013,FrPa14}---namely the closer the singular value is to one, the less mixing occurs between the coherent set and its exterior under the noisy dynamics.

By concatenating the ``forward-time'' and ``backward-time'' velocity fields on time intervals $[0,\tau]$ and $[\tau,2\tau]$---see \eqref{hatvdef} below---we will construct a system on the augmented space~$[0,2\tau]\times X$ that mimics the forward-backward evolution of the dynamics. In this way, the eigenmodes of the (Fokker--Planck-) generator $\bm{\hat{G}}$ of this augmented system yield the eigenmodes of $\mathcal{P}_{0,\tau}^*\mathcal{P}_{0,\tau}$; i.e., the singular modes of $\mathcal{P}_{0,\tau}$, and from these singular modes the desired finite-time coherent sets. Again, the corresponding eigenvalues of the generator can be used to give quantitative bounds on coherence/mixing, see Theorem~\ref{thm:cohratio}. We note that a numerical approach to extract coherent sets from~$\mathcal{P}_{0,\tau}$ by solving the Fokker--Planck equation has been described in~\cite{DJM}. In contrast to \cite{DJM} we do not require time-integration over $[0,\tau]$, which is especially advantageous once we consider the optimization of coherence and mixing.

Connecting the spectral properties of the generator of the augmented-space system with the (finite-time) dynamical properties of the original system is a generalization of the results from~\cite{FK17}, where it has been done for time-periodic velocity fields on infinite time intervals.
Through time-reflection of the finite-time problem we construct a time-periodic one, to which we apply the concepts of~\cite{FK17}.
We also remove an assumption from~\cite{FK17} (the ``niceness'') on the so computed sets, thus strengthening the approach.
There are several further non-trivial adjustments needed to fit the theory of~\cite{FK17} to this time-reflected setting, and the necessary details are covered in sections~\ref{sec:convdiff} and~\ref{sec:aug_gen}.

The interplay between the spectra of the dynamics in augmented space and the non-autonomous dynamics in the original space has strong connections to the correspondence between \emph{evolution semigroups}~\cite{How74} and two-parameter evolution families, as elaborated in, e.g.,~\cite{Chicone1999}; see also~\cite[Section VI.9]{EnNa00} for a general introduction.
We mention that by a similar construction, spatio-temporal dynamical patterns were extracted in~\cite{GiDa17} by considering the generator of the Koopman operator (the adjoint of the transfer operators considered here) associated with the augmented-space dynamics.

\subsection{Manipulation of coherence and mixing}

There are several different ways to measure mixing and mixedness under (stochastically perturbed) dynamics, such as considering dispersion statistics or the change of variation in a concentration field; see e.g.~\cite{Pro99,LiHa04,ThDoGi04,Thi08}. Multiscale norms of mixing measure how ``oscillatory'' a concentration field is~\cite{MaMePe05}; see~\cite{Thi12} for a review.
The most widely used approaches to the problem of mixing optimization search for switching protocols between some fixed velocity fields in order to optimize some topological~\cite{BoArSt00} or other mixing measure~\cite{MMGVP07, CoAdGi08, OBP15}. Other strategies include optimising the diffusion component of the dynamics~\cite{FGTW16}, the optimal distribution of concentration sources~\cite{ThPa08} and geometric dynamical systems techniques~\cite{Bal15}. An interesting theoretical result is that arbitrary mixedness under advection-diffusion can be achieved in finite time solely by sufficiently increasing the strength of the (otherwise fixed) advective flow~\cite{CKRZ08}. If there are no restrictions to the choice of the velocity field, one can choose the one that is optimally mixing the actual concentration at every time instance~\cite{LiThDo11}. We also note that a related problem to mixing enhancement arises in statistical mechanics~\cite{LePa13} where the convergence toward the stationary distribution should be accelerated, e.g., to increase the efficiency of sampling.

Instead of focusing on one fixed concentration field, we will bound the mixing characteristics of a flow in terms of the objects that most inhibit mixing: coherent sets.
As we mentioned above, finite-time coherent sets are characterized by the singular vectors of the transfer operator, and, equivalently, by the eigenvectors of the generator of the augmented-space process, while the corresponding eigenvalue delivers an upper bound on transport between the coherent set and its exterior. Thus, we  can quantitatively access the mixing behavior of a flow on finite time through the spectrum of the augmented generator~$\bm{\hat{G}}$, and can target these eigenvalues if we want to enhance or diminish mixing.

Given a default velocity field $v$ and non-autonomous perturbations $u\in U$ from an admissible space~$U$ of divergence-free velocity fields, our approach considers ``small'' $u$ that change a targeted eigenvalue $\mu$ of $\bm{\hat{G}}$ (thus also the singular value $\sigma$ of $\mathcal{P}_{0,\tau}$) locally optimally. This procedure can be iterated to obtain a larger perturbation in a gradient-method fashion.
Optimizing singular values of the transfer operator $\mathcal{P}_{0,\tau}$ directly is difficult as it would necessarily involve the variation of the nonlinear dynamics under the velocity field~$u$.
Instead, a linearized optimization of the eigenvalue of the generator~$\bm{\hat{G}}$ leads to a very simple optimization problem~\eqref{eq:expl-sol}, which can be solved via a linear system of the same dimension as~$U$. Moreover, the theory holds for infinite-dimensional perturbation spaces $U$ as well, since Fr\'echet differentiability of the transfer operator and its spectrum with respect to perturbing velocity fields has been established in~\cite{KLP2018}.

\subsection{Overview}

This work is structured as follows. In section~\ref{sec:convdiff} we introduce the $L^2$-function based formalism to study advection-diffusion systems by the Fokker--Planck equation and its evolution operator, the transfer operator, in forward and backward time.
In section~\ref{sec:flux} we consider purely advective transport between a family of sets and its exterior in terms of fluxes through the boundary of the family; this is a geometric analogue of the operator-based considerations that follow.
Section~\ref{sec:aug_gen} introduces the new reflected augmented generator needed to handle aperiodic, finite-time driving. We state formal connections between the spectrum of the reflected augmented generator and the reflected transfer operator, and provide spectral-based bounds on the maximal possible coherence of sets in phase space under the aperiodic dynamics.
Section~\ref{sec:comput} contains a numerical demonstration of the efficacy of our trajectory-free approach.
Section~\ref{sec:opt} describes the formal setup of the optimization problem designed to manipulate the position of the dominant spectral values of the reflected augmented generator, culminating in an explicit expression for the optimal time-dependent local perturbation of the velocity field.
Section~\ref{sec:opt_num} specializes the infinite-dimensional results of section~\ref{sec:opt} to the numerical setting via discretization and includes a variety of examples of coherence reduction and enhancement.
We conclude in section~\ref{sec:conclusion}.

\section{Advective-diffusive dynamics}
\label{sec:convdiff}
Let $X \subset \R^d$ be a bounded and open set with compact and smooth  (
piecewise $C^4$) boundary.
We consider the time interval $[0,\tau]$ and the dynamics
\begin{equation}\label{dynsystem}
	d x_t = v(t,x_t) dt + \ep \, dw_t
\end{equation}
with reflecting boundary conditions for $v \in C^{(1,1)} ( [0,\tau]\times \overline{X}; \R^d)$\footnote{$C^{(1,1)}([0,\tau]\times \overline{X};\mathbb{R}^d)$ denotes the Banach space of functions $f \, : \, [0,\tau]\times \overline{X} \rightarrow \mathbb{R}^d$ that are continuously differentiable in $t$ and cotinuously differentiable in $x$.}and $(w_t)_{t\geq 0}$ being a standard Wiener process in $\R^d$.
The initial point $x_{0}$ is distributed according to some initial density $f_0\in L^2(X)$.
The evolution of the density of the governing equation (\ref{dynsystem}) is given by the Fokker--Planck equation or Kolmogorov forward equation \cite[Section 11.6]{lasotamackey},
\begin{align}
\label{FP0}
	\partial_t f(t,x) &= -\text{div}_x \big(f(t,x)v(t,x) \big)+\frac{\ep^2}{2} \Delta_x f(t,x)\\
	f(0,x) &= f_0(x)\label{eq:f_0}\\
	\frac{\partial f(t,\cdot)}{\partial n} &=0\mbox{ on $\partial X$,} \nonumber
\end{align}
where $\tfrac{\partial}{\partial n}$ is the normal derivative on the boundary.
Associated to \eqref{FP0} is an evolution operator $\mathcal{P}_{0,t}:L^2(X)\to L^2(X)$ that transports a density $f_0\in L^2(X)$ at time 0 to the solution density of \eqref{FP0} at time $t$.
The evolution operator $\mathcal{P}_{0,t}$ is an integral operator with stochastic\footnote{Doubly stochastic if the flow is volume-preserving.} kernel
$k(t,\cdot, \cdot):X \times X\to \mathbb{R}^+$ that satisfies~\cite[Assumptions 1 and~2]{Froyland2013}.

\subsection{Construction of a forward-backward process}

For simplicity of presentation we assume that the velocity field $v(t,\cdot)$ is divergence free for all $t\in[0,\tau]$.
We note that the remaining arguments in this section may be carried through for general velocity fields.
Denote by $\langle \cdot , \cdot \rangle_{H}$ the canonical scalar product of a Hilbert space $H$.
Following \cite{Froyland2013} in the volume-preserving setting\footnote{The operators $\mathcal{P}_{0,\tau}$ and $\mathcal{P}_{0,\tau}^*$ are replaced by normalised versions for nonzero divergence velocity fields;  these are denoted by $\mathcal{L}$ and $\mathcal{L}^*$ in \cite{Froyland2013}.}, coherent sets over the time interval $[0,\tau]$ are extracted from the eigenfunctions of $\mathcal{P}_{0,\tau}^*\mathcal{P}_{0,\tau}$ corresponding to large eigenvalues, where $\mathcal{P}_{0,\tau}^*$ is the {$L^2$-adjoint} of $\mathcal{P}_{0,\tau}$, defined to be the unique linear operator satisfying
\begin{equation*}
	\langle \mathcal{P}_{0,t} f, g \rangle_{L^2(X)} = \langle f , \mathcal{P}_{0,t}^{\ast} g \rangle_{L^2(X)}
\end{equation*}
for all $f,g \in L^2(X)$.
The eigenvalues of $\mathcal{P}_{0,\tau}^*\mathcal{P}_{0,\tau}$ (the singular values of $\mathcal{P}_{0,\tau}$) are known to lie in the interval $[0,1]$ (cf. \cite[p.3]{Froyland2013}). The rationale behind the operator $\mathcal{P}_{0,\tau}^*\mathcal{P}_{0,\tau}$ is that $\mathcal{P}_{0,\tau}$ describes evolution in forward time, $\mathcal{P}_{0,\tau}^*$ describes evolution under the time-reversed dynamics, and coherent sets are characterized exactly by the property that they are ``stable'' under a noisy forward-backward evolution of the dynamics.

The adjoint operator $\mathcal{P}_{t,\tau}^*$ is the solution operator to the Kolmogorov backward equation~\cite{Pavliotis2008}:
\begin{align}
\label{FP0dual}
	- \partial_t g(t,x)&=\langle \nabla_x(g(t,x)), v(t,x) \rangle_{\R^d}+\frac{\ep^2}{2}\Delta_x g(t,x)\\
	g(\tau,x)&= g_{\tau}(x) \nonumber\\
	\frac{\partial g(t,\cdot)}{\partial n} &=0\mbox{ on $\partial X$.} \nonumber
\end{align}
The operator $\mathcal{P}_{t,\tau}^{\ast}$ maps a density $g_{\tau}$ at time $t=\tau$ backward in time according to \eqref{FP0dual} to produce a density $g_t$ at time $t<\tau$.
We may simplify (\ref{FP0dual}) using volume preservation:
 \begin{equation}
\label{div-back}
	\begin{aligned}
		\langle \nabla_x (g(t,x)) , v(t,x) \rangle_{\R^d} &= \text{div}_x (g(t,x ) v(t,x )) - g(t,x) \text{div}_x(v(t,x)) \\
			&= \text{div}_x (g(t,x ) v(t,x )) \; .
	\end{aligned}
\end{equation}
Thus we may write \eqref{FP0dual} as
\begin{equation}
\label{FP1dual}
	- \partial_t g(t,x)= \text{div}_x (g(t,x)v(t,x))+\frac{\ep^2}{2} \Delta_x g(t,x) \; .
\end{equation}
Reversing time in (\ref{FP0dual}) to obtain an initial value problem we get
\begin{align}
\label{FP0dualreverse}
	\partial_t f(t,x)&= - \text{div}_x (f(t,x)\bar{v}(t,x)) +\frac{\ep^2}{2} \Delta_x f(t,x)\\
	f(0,x) &= \bar{f}_0(x)\nonumber \\
	\frac{\partial f(t,\cdot)}{\partial n} &=0 \mbox{ on $\partial X$} \nonumber
\end{align}
using $f (t, x ) = g (\tau - t, x )$, $\bar{f}_0 (x) = g_{\tau} (x)$ and the velocity field $\bar{v}(t,x ) = - v(\tau - t, x )$.
Comparing (\ref{FP0}) and (\ref{FP0dualreverse}) we see that the natural evolution of the adjoint problem (the  Kolmogorov backward equation) corresponds to the forward problem (the Kolmogorov forward equation) of the time reversed dynamics.

We wish to construct the process over the time interval $[0,2\tau]$ that corresponds to the operator $\mathcal{P}_{0,\tau}^*\mathcal{P}_{0,\tau}$.
We view $\mathcal{P}_{0,\tau}^*$ as evolution on the time interval $[\tau,2\tau]$ and we therefore shift (\ref{FP0dualreverse}) by $\tau$ time units, defining $\tilde{v}(t,x) := \bar{v}(t-\tau,x)=-v(2\tau-t,x)$ to obtain a forward problem on $[\tau, 2\tau]$:
\begin{align}
\label{FPBshift}
	\partial_t f(t,x)&= -\text{div}_x(f(t,x)\tilde{v}(t,x))+\frac{\ep^2}{2} \Delta_x f(t,x)\\
	f(\tau,x) &= \tilde{f}_{\tau}(x) = \bar{f}_0 (x) \nonumber\\
	\frac{\partial f(t,\cdot)}{\partial n} &=0 \mbox{ on $\partial {X}$.} \nonumber
\end{align}
We denote the solution operator of this problem as $\tilde{\mathcal{P}}_{\tau,t}$ ($=\mathcal{P}^{\ast}_{2\tau-t, \tau}$).

Finally, we concatenate the two forward problems \eqref{FP0} and \eqref{FPBshift} to make a single process over $[0,2\tau]$.
We mark objects that live on this extended interval $[0,2\tau]$ with a hat $\hat{\phantom{v}}$.
Define the velocity field
\begin{equation}
\label{hatvdef}
	\hat{v}(t,\cdot)= \zeta' (t) v(\zeta (t),\cdot ) = \left\{
    	\begin{array}{ll}
        	v(t,\cdot), & \hbox{$t\in [0,\tau]$;} \\
            - v(2\tau -t,\cdot), & \hbox{$t\in (\tau,2\tau]$,}
        \end{array}
        \right.
\end{equation}
using the reflection map
\begin{equation}
\label{reflmap}
	\zeta (t) = \left\{
    	\begin{array}{ll}
        	t, & \hbox{$t\in [0,\tau]$;} \\
            2 \tau - t, & \hbox{$t\in (\tau,2\tau]$.}
    	\end{array}
    	\right.
\end{equation}
The resulting velocity field $\hat{v}$ exhibits discontinuities in $0,\tau$ and $2\tau$ whenever it does not vanish there, but one-sided derivatives exist\footnote{The following regularity may not be most general but is meant to give some intuition: $\hat{v} \in C^{(1,1)} (((0,\tau)\cup (\tau,2\tau))\times \overline{X};\mathbb{R}^d)$ and $\hat{v} \in L^p((0,2\tau)\times X ; \mathbb{R}^d)$, for $1\leq p \leq \infty$.}
In what follows, we will solve the Fokker--Planck equation
\begin{align}
\label{FP2}
	\partial_t \hat{f}(t,x) &= -\text{div}_x(\hat{f}(t,x)\hat{v}(t,x))+\frac{\ep^2}{2} \Delta_x \hat{f}(t,x)\\
	\hat{f}(0,x) &=f_0(x)\nonumber \\
	\frac{\partial \hat{f}(t,\cdot)}{\partial n} &=0\mbox{ on $\partial X$,}\nonumber
\end{align}
over the interval $t\in[0,2\tau]$; more precisely on $(0,\tau)\cup (\tau,2\tau)$ with $L^2$-continuous concatenation at $t=\tau$ (see Prop. \ref{augmen}).
Let us summarize the above construction with the following proposition.
\begin{proposition}
\label{claim:FB}
	The concatenation $\mathcal{P}_{0,\tau}^*\mathcal{P}_{0,\tau} =: \hat{\mathcal{P}}_{0,2\tau}$ with $\tilde{f}_{\tau} = \mathcal{P}_{0,\tau}f_0$ comprises initializing (\ref{FP0}) at time 0, solving forward using the vector field $v(t,\cdot)$ until time $\tau$, then continuing to evolve~\eqref{FPBshift} for another $\tau$ time units, but now using the reflected and shifted vector field $-v(2\tau-t,\cdot)$ for $t\in [\tau,2\tau]$ corresponding to the reversed dynamics.
\end{proposition}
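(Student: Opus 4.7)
The plan is to unwind the definitions and verify that the operator $\hat{\mathcal{P}}_{0,2\tau}$, built by concatenating the two forward Fokker--Planck flows \eqref{FP0} and \eqref{FPBshift}, actually equals the composition $\mathcal{P}_{0,\tau}^*\mathcal{P}_{0,\tau}$. The argument is essentially bookkeeping: there is nothing analytically deep, but one has to follow the time-reversal/time-shift chain carefully.

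First, by definition of $\mathcal{P}_{0,\tau}$, initializing \eqref{FP0} with $f_0$ and solving forward over $[0,\tau]$ produces the density $\mathcal{P}_{0,\tau}f_0$ at time $\tau$. This handles the first half of the concatenation.

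Next, I would identify the solution operator $\tilde{\mathcal{P}}_{\tau,2\tau}$ of \eqref{FPBshift} with $\mathcal{P}_{0,\tau}^*$. The adjoint $\mathcal{P}_{0,\tau}^*$ is defined as the solution operator of the Kolmogorov backward equation \eqref{FP0dual} over $[0,\tau]$, mapping a terminal condition $g_\tau$ at time $\tau$ to $g_0$ at time $0$. Using the divergence-free assumption, \eqref{FP0dual} becomes \eqref{FP1dual}. The time reversal $f(t,x) = g(\tau - t, x)$ then turns \eqref{FP1dual} into the forward problem \eqref{FP0dualreverse} on $[0,\tau]$ with velocity field $\bar v(t,x) = -v(\tau - t, x)$; in this picture the map $g_\tau \mapsto g_0$ becomes $\bar f_0 \mapsto \bar f(\tau, \cdot)$. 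Finally, the shift $t \mapsto t - \tau$ translates this into \eqref{FPBshift} on $[\tau,2\tau]$ with $\tilde v(t,x) = -v(2\tau - t,x)$, and the resulting solution operator $\tilde{\mathcal{P}}_{\tau,2\tau}$ therefore coincides with $\mathcal{P}_{0,\tau}^*$ (this is also recorded in the parenthetical $\tilde{\mathcal{P}}_{\tau,t} = \mathcal{P}^*_{2\tau - t,\tau}$ following \eqref{FPBshift}).

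Putting the two halves together, the concatenation prescription sets $\tilde f_\tau := \mathcal{P}_{0,\tau}f_0$ as the initial condition for \eqref{FPBshift} at time $\tau$; this is precisely the $L^2$-continuous matching at $t=\tau$ alluded to before Proposition~\ref{claim:FB}. Evolving to time $2\tau$ then yields
\[
\hat{\mathcal{P}}_{0,2\tau} f_0 \;=\; \tilde{\mathcal{P}}_{\tau,2\tau}\bigl(\mathcal{P}_{0,\tau} f_0\bigr) \;=\; \mathcal{P}_{0,\tau}^* \mathcal{P}_{0,\tau} f_0,
\]
which is the claimed identity. The only delicate point is the legitimacy of the change of variable $f(t,x) = g(\tau - t, x)$ at the level of $L^2$-solutions and the compatibility of the reflecting boundary conditions with time reversal; these are routine since both \eqref{FP0} and \eqref{FP0dual} carry the same Neumann condition $\partial_n = 0$, which is preserved under $t \mapsto \tau - t$ and under the shift by $\tau$. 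Thus the proof reduces to a careful chase of the four displayed PDEs, with no real obstacle beyond keeping the sign of $v$ and the direction of time straight.
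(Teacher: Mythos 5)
Your proposal is correct and follows essentially the same route as the paper: the paper does not give a separate proof of Proposition~\ref{claim:FB}, but presents it as a summary of the chain \eqref{FP0dual} $\to$ \eqref{FP1dual} $\to$ \eqref{FP0dualreverse} $\to$ \eqref{FPBshift} derived immediately before it, together with the identification $\tilde{\mathcal{P}}_{\tau,t}=\mathcal{P}^*_{2\tau-t,\tau}$, and your argument is precisely that chain re-traced and assembled into the composition $\tilde{\mathcal{P}}_{\tau,2\tau}\mathcal{P}_{0,\tau}=\mathcal{P}_{0,\tau}^*\mathcal{P}_{0,\tau}$.
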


\section{Cumulative flux from a reflected family of sets}
\label{sec:flux}

Before proceeding with the operator-based description of finite-time coherence, in this section we analyse the reflected dynamics by its flux through the boundary of a moving (possibly coherent) set. Our intention behind connecting this to a flux in augmented space (i.e., space-time) in Proposition~\ref{prop:outflux} is partially to set the stage for the augmented-space operator-based description in section~\ref{sec:aug_gen}. Apart from strengthening the  intuition for the forward-backward construction that is used in here, the results of the section \ref{sec:outflux} are not formally needed for the rest of the paper.

\subsection{Augmentation and reflection}\label{sec:augmented}
For a family of sets $\{A_t\}_{t\in [0,\tau]}$, $A_t \subset X$, we consider the \textit{augmented set}
\begin{equation}
\label{augset}
	\bm{A}=\bigcup_{\theta=0}^\tau \{\theta\}\times A_\theta \subset [0,\tau]\times X,
\end{equation}
in the augmented state space $\bm{X} := [0,\tau]\times X$. Let $\bm{n(x)}$ denote the unit outer normal on $\partial \bm{A}$ at $\bm{x} \in \bm{X}$ and $\bm{v}$ the augmented velocity field defined by
\begin{equation*}
	\bm{v}(\bm{x}) = (1,v(\theta,x)) \; .
\end{equation*}

We define a \textit{reflected} family of sets $\{\hat{A}_t\}_{t\in [0,2\tau]} = \{A_{\zeta(t)}\}_{t \in [0,2\tau]}$ in synchrony with the reflected vector field $\hat{v}$:
\begin{equation}
\label{hatAdef}
\hat{A}_t=\left\{
                 \begin{array}{ll}
                   A_t, & \hbox{$t\in [0,\tau]$,} \\
                   A_{2\tau-t}, & \hbox{$t\in (\tau,2\tau]$}
                 \end{array}
               \right.
\end{equation}
and the augmented reflected set
\begin{equation*}
	\hat{\bm{A}} := \bigcup_{t=0}^{2\tau} \{t\} \times \hat{A}_t = \bigcup_{t=0}^{2\tau} \{t\} \times A_{\zeta(t)} \; ;
\end{equation*}
see Figure \ref{fig:augment}.
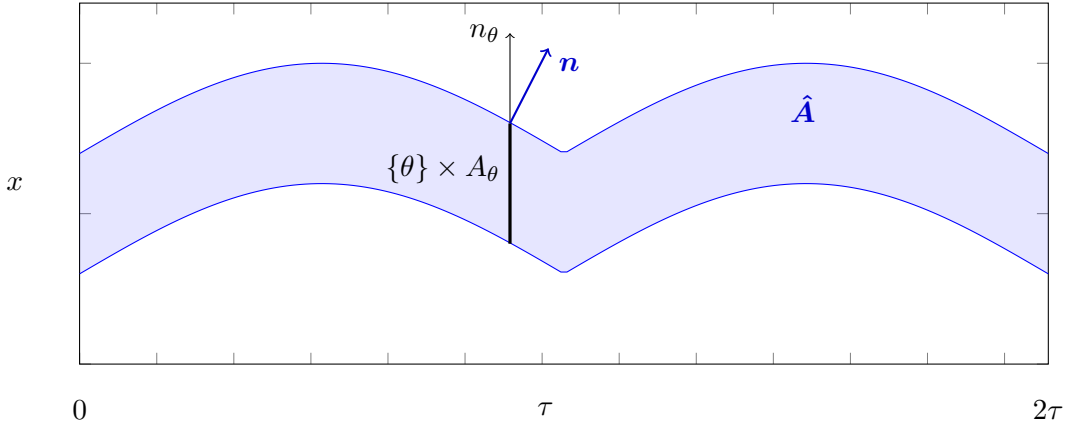
\begin{figure}[hbt]
	\begin{center}
	\begin{tikzpicture}[]
    \begin{axis}[
    enlargelimits=0.0001,
    xtick       = {},
    xticklabels = {},
    ytick = {},
    yticklabels = {},
    samples = 160,
    xmin = 0, xmax = 2*pi,
        ymin = 0, ymax = 1.2,
        width = 0.9\textwidth,
        height = 0.4\textwidth
    ]
    \addplot[name path=fb,domain= 0:2*pi,blue] {0.3*abs(sin((\x) r))+0.3};
    \addplot[name path=ft,domain= 0:2*pi,blue] {0.3*abs(sin(\x r))+0.7};
    \addplot [
        very thin,
        color=gray,
        fill=blue,
        fill opacity=0.1
    ]
    fill between[
        of=fb and ft,
        soft clip={domain=0:2*pi},
    ];
    \addplot[very thick] coordinates {
    (pi-0.35,0.4)
    (pi-0.35,0.8)
    };
        \pgfplotsset{
    after end axis/.code={
        \node[left] at (axis cs:pi+1.7,0.85){\textcolor{blue!80!black}{$\bm{\hat{A}}$}};
        \node[left] at (axis cs:-0.3,0.6){$x$};
        \node[left] at (axis cs:pi,-0.15){$\tau$};
        \node[] at (axis cs:0,-0.15){$0$};
        \node[] at (axis cs:2*pi,-0.15){$2\tau$};
        \draw[->] (axis cs:pi-0.35,0.8) -- (axis cs:pi-0.35,1.1);
        \node[ left] at (axis cs:pi-0.35,1.1) {$n_{\theta}$};
        \draw[->,thick,color=blue!80!black] (axis cs:pi-0.35,0.8) -- (axis cs:pi-0.1,1.05);
        \node[below right,color=blue!80!black] at (axis cs:pi-0.1,1.05) {$\bm{n}$};
        \node[left] at (axis cs:pi-0.35,0.65){$\{\theta\} \times A_{\theta}$};
        }
        }
    \end{axis}
\end{tikzpicture}
\caption{Illustration of the augmented reflected set $\hat{\bm{A}}$ and normal vectors $\hat{n}_t (x)$ in the case where $d=1$.}
\label{fig:augment}
\end{center}
\end{figure}

\subsection{Outflow flux}\label{sec:outflux}
We consider a family of $d$-dimensional sets $\{A_t\}_{t \in [0,\tau]}$, $A_t \subset X\subset \mathbb{R}^d$ satisfying the following assumptions (the boundaries are piecewise smooth in space and differentiable in time):
\begin{assumption}~
\label{assumpAt}
	\begin{enumerate}
		\item There exists a co-dimension 1 parameterisation set $R \subset \R^{d-1}$ such that for each $t \in [0,\tau]$ there is a bijective function $a(t,\cdot) \, : \, R \rightarrow \partial A_t$ with $a$ being piece-wise smooth, $a \in C^{(1,0)}([0,\tau] \times R;\mathbb{R}^d)$ \footnote{$C^1$ in $t$ and $C^0$ in $r$} piecewise.
		\item The mapping $b(t,x) := \frac{\partial a}{\partial t} (t,r)$ is well defined for all $t \in [0,\tau]$ and all $x \in \partial A_t$, where $a(t,r) = x$.
	\end{enumerate}
\end{assumption}

The \textit{cumulative outflow flux} under the vector field $v(t,\cdot),\, t\in[0,\tau]$ from a family of sets $\{A_t\}_{t\in [0,\tau]}$
is given by
\begin{equation}
\label{outflux0}
	\int_0^\tau \int_{\partial A_t} \langle v(t,x)-b(t,x),n_t(x)\rangle_{\R^d}^+\ dS(x)dt,
\end{equation}
where $(\cdot)^+$ denotes the positive part, $S (x)$ is the $d-1$ dimensional surface measure and $n_t(x)$ is the outer normal unit vector.
The result \cite[Theorem 2]{FK17} shows that  \eqref{outflux0} is equal to the \textit{instantaneous outflow flux} from the set $\bm{A}$, defined by
\begin{equation}
\label{augoutflux0}
	\int_{\partial \bm{A}}\langle \bm{v}(\bm{x}),\bm{n}(\bm{x})\rangle_{\R^{d+1}}^+\ d\bm{S}(\bm{x}),
\end{equation}
with $\bm{S} (\bm{x})$ denoting the $d$-dimensional surface measure.

We extend this result to the reflected velocity field $\hat{v}(t,\cdot), t\in [0,2\tau ]$, generated by a general aperiodic velocity field.

In particular, recalling $\zeta$ from~\eqref{reflmap}, for every $t \in [0,2\tau]$ the boundary $\partial \hat{A}_t$ has a parametrization $\hat{a}(t,\cdot) = a (\zeta(t),\cdot) \, :\, R \rightarrow \partial \hat{A}_t$ and
\begin{equation}
\label{hatwdef}
	\hat{b} (t,x) = \left\{
                 \begin{array}{ll}
                   \frac{\partial a}{\partial t} (t, r), & \hbox{$t\in [0,\tau ]$} \\
                   - \frac{\partial a}{\partial t} (2\tau -t , r ), & \hbox{$t\in ( \tau,2\tau]$.}
                 \end{array}
               \right.
\end{equation}
At $t=\tau$ the right- and the left-sided partial derivatives of $a$ with respect to $t$ exists but they may not be equal.
The family of normal vectors is mirrored in time (see Figure \ref{fig:augment}): $\hat{n}_t (x) = n_{\zeta (t)} (x)$ for $t \in [0,2\tau]$.

\begin{proposition}\label{prop:outflux}
The cumulative outflow flux from the family of sets $\hat{A}_t, t\in [0,2\tau]$ under the vector field $\hat{v}(t,\cdot), t\in [0,2\tau]$, is equal to the cumulative absolute flux in and out of the family of sets $A_t$, $t\in [0,\tau]$, under the vector field $v(t,\cdot),t\in[0,\tau]$; that is,
\begin{equation}
\label{absflux1}
\begin{aligned}
	\int_0^{2\tau} \int_{\partial \hat{A}_t} &\langle \hat{v}(t,x)-\hat{b}(t,x),\hat{n}_t(x)\rangle_{\R^d}^+\ dS(x)dt = \\
&\qquad \int_0^{\tau} \int_{\partial A_t} |\langle v(t,x)-b(t,x),n_t(x)\rangle_{\R^d} |\ dS(x)dt.
\end{aligned}
\end{equation}
Furthermore, the time-integrated flux \eqref{absflux1} is equal to the instantaneous absolute flux in augmented space:
\begin{equation}
\label{augabsflux0}
	\int_{\partial \bm{A}}|\langle \bm{v(x),n(x)}\rangle_{\R^{d+1}}|\ d\bm{S(x)}.
\end{equation}
\end{proposition}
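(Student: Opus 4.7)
The proposition consists of two equalities, which I would prove in sequence. The first rewrites a one-sided (outflow) flux over the reflected interval $[0,2\tau]$ as an absolute-value flux over the original interval $[0,\tau]$; the second lifts this time-integrated flux to an instantaneous flux in augmented space~$\bm{X}$. In both steps the common device is to use the substitution $s := 2\tau - t$ to reduce reflected quantities to their originals, and then to recombine positive-part and negative-part contributions via the elementary identity $r^+ + (-r)^+ = |r|$ for $r \in \R$.

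For the first equality I would split $\int_0^{2\tau}\,dt$ at $t=\tau$. On $[0,\tau]$ the definitions \eqref{hatvdef}, \eqref{hatwdef}, together with $\hat{n}_t = n_{\zeta(t)}$, give $\hat{v}=v$, $\hat{b}=b$, and $\hat{n}_t = n_t$, so this half already contributes $\int_0^\tau\int_{\partial A_t}\langle v-b,n\rangle^+\,dS\,dt$. On $(\tau,2\tau]$ the substitution $s := 2\tau - t$ produces $\hat{v}(t,x)=-v(s,x)$, $\hat{b}(t,x)=-b(s,x)$, and $\hat{n}_t(x)=n_s(x)$, from which
\[
\langle \hat{v}(t,x) - \hat{b}(t,x),\,\hat{n}_t(x)\rangle \;=\; -\langle v(s,x)-b(s,x),\,n_s(x)\rangle.
\]
Consequently the second half equals $\int_0^\tau \int_{\partial A_s}(-\langle v-b,n\rangle)^+\,dS\,ds$. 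Summing the two halves and invoking $r^+ + (-r)^+ = |r|$ yields the middle expression in \eqref{absflux1}.

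For the second equality I would apply \cite[Theorem 2]{FK17} to the reflected dynamics on $[0,2\tau]$ with the family $\{\hat{A}_t\}$ and vector field $\hat{v}$; this identifies the LHS of \eqref{absflux1} with the augmented instantaneous outflow flux $\int_{\partial \hat{\bm{A}}}\langle \hat{\bm{v}},\hat{\bm{n}}\rangle^+\,d\bm{S}$, where $\hat{\bm{v}}(t,x):=(1,\hat{v}(t,x))$. I then split the lateral boundary $\partial \hat{\bm{A}}$ at $t=\tau$. On the lower piece $\hat{\bm{v}}=\bm{v}$ and $\hat{\bm{n}}=\bm{n}$, yielding the $\langle \bm{v},\bm{n}\rangle^+$ contribution on the corresponding portion of $\partial \bm{A}$. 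On the upper piece, the time-reflection $(t,x)\mapsto(2\tau-t,x)$ preserves the spatial component of the outward augmented normal while flipping its temporal component: writing $\bm{n}=(n^t,n^x)$ at $(s,x)\in\partial\bm{A}$, one gets $\hat{\bm{n}}=(-n^t,n^x)$ and $\hat{\bm{v}}=(1,-v(s,x))$ at the mirror point $(2\tau-s,x)$. A direct computation then gives $\langle \hat{\bm{v}},\hat{\bm{n}}\rangle = -\langle \bm{v},\bm{n}\rangle$, so its positive part contributes $(-\langle \bm{v},\bm{n}\rangle)^+$ on $\partial\bm{A}$ after the change of variable. Adding the two pieces and applying $r^+ + (-r)^+ = |r|$ once more produces \eqref{augabsflux0}.

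The only non-routine step is the sign bookkeeping for the outward augmented normal under the time-reflection---its spatial component is preserved while its temporal component is negated---together with the need to confirm that the time-cap portions of $\partial\hat{\bm{A}}$ at $t=0,\tau,2\tau$ do not contaminate the identification, exactly as in the proof of \cite[Theorem 2]{FK17}. Beyond this, everything reduces to elementary manipulations of positive parts and a single invocation of that theorem.
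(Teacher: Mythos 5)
Your proof of the first equality is essentially the one the paper gives: split at $t=\tau$, on $[0,\tau]$ the reflected objects coincide with the originals, and on $(\tau,2\tau]$ substitute $s=2\tau-t$ to pick up a sign and convert the second positive part into a negative part, then combine via $r^++(-r)^+=|r|$. (The paper phrases the substitution as using $\zeta$ together with a Gram determinant, but the mechanism is identical.)

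For the second equality you take a different route from the paper. The paper simply asserts that it ``follows analogously to [Theorem~2, FK17]'', i.e., one reruns that theorem's proof with $|\cdot|$ in place of $(\cdot)^+$, obtaining directly
$\int_0^\tau\int_{\partial A_t}|\langle v-b,n_t\rangle|\,dS\,dt=\int_{\partial\bm{A}}|\langle\bm v,\bm n\rangle|\,d\bm S$.
You instead invoke [Theorem~2, FK17] \emph{as stated} on the reflected family $\hat{\bm A}$ over $[0,2\tau]$ (identifying the LHS of \eqref{absflux1} with $\int_{\partial\hat{\bm A}}\langle\hat{\bm v},\hat{\bm n}\rangle^+\,d\bm S$) and then perform a second fold-and-sign-flip argument in augmented space, using that time reflection sends $(n^t,n^x)\mapsto(-n^t,n^x)$ and $(1,v)\mapsto(1,-v)$, so $\langle\hat{\bm v},\hat{\bm n}\rangle=-\langle\bm v,\bm n\rangle$; then $r^++(-r)^+=|r|$ once more. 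Both routes are valid; yours avoids having to re-derive an absolute-value variant of [FK17, Thm.~2], at the cost of an extra unfolding step and of checking that the theorem applies to the merely piecewise-smooth reflected data (one-sided derivatives at $t=\tau$, discontinuities at $0,\tau,2\tau$), which you rightly flag. The sign bookkeeping you give for the augmented normal is correct.
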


\begin{proof}
	Let us first prove the first equality. Therefore we do not need objects of the augmented setting yet.
We split the integral
\begin{align}
	&\int_0^{2\tau} \int_{\partial \hat{A}_t} \langle \hat{v} (t,x) - \hat{w}(t,x) , \hat{n}_t(x) \rangle^+ \, dS (x) \, dt \nonumber\\
	& \qquad = \int_0^{\tau} \int_{\partial A_t} \langle v(t,x) - w(t,x), n_t (x) \rangle^+ \, dS (x) \, dt \nonumber \\
	&\qquad \quad + \int_{\tau}^{2\tau} \int_{\partial A_{\zeta (t)}} \langle - v(\zeta(t),x) - (- w(\zeta(t),x)), n_{\zeta (t)} (x) \rangle ^+ \, dS (x) \, dt \label{outfluxint}
\end{align}
and see that the first integral is already a part of what is needed. So we only need to treat the second integral~\eqref{outfluxint}. We use Fubini's theorem and substitute using $\zeta$ with $g_{\zeta(t)}(r)$ being the Gram determinant. (It is important to note that we need and use substitution in one dimension, the time dimension, because we need the sign we get from substitution, which we would not get in higher dimensions.)
\begin{align*}
	\eqref{outfluxint} &= \int_{\tau}^{2\tau} \int_R \langle v(\zeta(t), a (\zeta(t),r) - \frac{\partial a}{\partial t} (\zeta(t),r), n_{\zeta(t)} ( a (\zeta(t),r)) \rangle^- g_{\zeta(t)}(r) \, dr \, dt \\
	&= \int_R \int_{\tau}^{0} \langle v(t,a(t,r)) - \frac{\partial a}{\partial t} (t,r), n_t (a(t,r)) \rangle^- g_t (r) (-1) \, d S(x) \, dt \\
	&= \int_0^{\tau} \int_{\partial A_t} \langle v(t,x) - w(t,x) , n_t (x) \rangle^- \, dS (x) \, dt \; .
\end{align*}
Combining the two calculations above we get the desired result.
The second equality involving the objects of the augmented setting follows analogously to~\cite[Theorem~2]{FK17}.
\end{proof}

\section{Coherent families of sets and the generator on augmented phase space}
\label{sec:aug_gen}

In this section we create a so-called spectral mapping theorem for our reflected augmented process (Proposition \ref{augmen}) and derive a bound for the finite-time coherence of a family of sets $\{A_t\}_{t\in[0,\tau]}$ in terms of the second eigenvalue of a generator (the infinitesimal operator) of our augmented reflected advection-diffusion process (Theorem \ref{thm:cohratio}).
To do this we build on discrete-time theory from \cite{Froyland2013} with the periodic continuous-time theory from~\cite{FK17}.

\subsection{The evolution operator for the reflected process}
\label{ssec:reflectP}
It is well known that $\mathcal{P}_{s,s+t}$, $t>0$, is a compact, integral preserving, real and positive operator on $L^2(X)$ while $t\mapsto \mathcal{P}_{s,s+t}f$ is continuous as a mapping from $[0,\infty)$ to $L^2(X)$ for any fixed $f\in L^2(X)$\footnote{See \ref{thm:NACP} for compactness and continuity, and~\cite{lasotamackey} or~\cite{KlKoSch16} for the other properties.}.
Furthermore $\smash{ \hat{\mathcal{P}}_{0,2\tau} = \mathcal{P}^{\ast}_{0,\tau} \mathcal{P}_{0,\tau} }$ is a self-adjoint operator on $L^2(X)$ with simple largest eigenvalue $\lambda_1 (\hat{\mathcal{P}}_{0,2\tau}) =1$.
Following \cite{Froyland2013} one has that the second eigenvalue $\smash{ \lambda_2(\hat{\mathcal{P}}_{0,2\tau}) }$ satisfies
\begin{equation}\label{eq:eig-sing}
	\sqrt{\lambda_2 (\hat{\mathcal{P}}_{0,2\tau})} = \sigma_2 ( \mathcal{P}_{0,\tau}) = \max_{\substack{f_0 \in L^2(X,\mu_0) \\ g_{\tau} \in L^2(X,\nu_{\tau}) \\ \langle f_0 , 1 \rangle_{\mu_0} = 0 \\ \langle g_{\tau} , 1 \rangle_{\nu_{\tau}} = 0}} \left\{ \dfrac{\langle \mathcal{P}_{0,\tau} f_0 , g_{\tau} \rangle_{\nu_{\tau}}}{\| f_0 \|_{\mu_0} \| g_{\tau} \|_{\nu_{\tau}}} \right\} < 1,
\end{equation}
where in the volume-preserving\footnote{In the nonzero divergence case, $\mu_0$ is a reference measure describing the initial mass distribution of the (possibly compressible) fluid being evolved, and $\nu_{\tau}$ is the forward evolution of $\mu_0$ under a normalised version of $\mathcal{P}_{0,\tau}$ denoted by $\mathcal{L}$ in \cite{Froyland2013}.}
setting $\mu_0$ and $\nu_{\tau}$ are both simply the Lebesgue measure.
We now consider the problem \eqref{FP2} introduced in section \ref{sec:convdiff} as a time-periodic problem on $2 \tau S^1 \times X$ (we extend $\hat{v}$ periodically). Following the considerations of section \ref{sec:convdiff} the evolution operator $\hat{\mathcal{P}}_{s,s+t}$ starting from time $s$, w.l.o.g.\ $s \in [0,2\tau]$, flowing for time $t \geq 0$ to $s+t= k\tau + r$, $k = \lfloor \frac{s+t}{\tau} \rfloor \in \mathbb{N}\cup\{0\}$ and $  (s+t)\mod \tau = r \in [0,\tau)$, is given by
\begin{equation}
\label{Phat}
	\hat{\mathcal{P}}_{s,s+t}=\left\{
                       \begin{array}{ll}
                         \mathcal{P}_{s,s+t} & \hbox{$s\in [0,\tau], t\in [0, \tau-s]$, ($k = 0$)} \\ 
					  \mathcal{P}^*_{2\tau -r,\tau} ( \mathcal{P}_{0,\tau } \mathcal{P}_{0,\tau}^*)^\frac{k-1}{2} \mathcal{P}_{s,\tau}, & \hbox{$s\in [0,\tau] , t > \tau-s, k$ odd,} \\ 
					 \mathcal{P}_{0,r} \mathcal{P}^*_{0,\tau} ( \mathcal{P}_{0,\tau } \mathcal{P}_{0,\tau}^*)^\frac{k-2}{2} \mathcal{P}_{s,\tau}, & \hbox{$s\in [0,\tau] , t > \tau-s, 2\leq k$ even,} \\ 
					  \mathcal{P}^*_{2\tau - (s+t),2\tau-s}, & \hbox{$s\in [\tau,2\tau],t \in[0, 2\tau - s]$,($k=1$)}\\ 
                        \mathcal{P}_{0,r} (\mathcal{P}_{0,\tau}^* \mathcal{P}_{0,\tau})^{\frac{k-2}{2}} \mathcal{P}_{0,2\tau - s}^*, & \hbox{$s \in [\tau , 2\tau], t > 2\tau -s,2\leq k$ even,} \\
                        \mathcal{P}_{\tau-r,\tau}^* \mathcal{P}_{0,\tau} (\mathcal{P}_{0,\tau}^* \mathcal{P}_{0,\tau})^{\frac{k-3}{2}} \mathcal{P}_{0,2\tau - s}^*, & \hbox{$s \in [\tau , 2\tau], t > 2\tau -s, 3\leq k$ odd.}
                       \end{array}
                     \right.
\end{equation}
The situation when $t$ is exactly $2\tau$ is of particular importance:
\begin{equation}
\label{Phat2}
\hat{\mathcal{P}}_{s,s+2\tau}=\left\{
                       \begin{array}{ll}
                         \mathcal{P}_{0,s}\mathcal{P}^*_{0,\tau}\mathcal{P}_{s,\tau}, & \hbox{$s \in [0,\tau]$;} \\
                         \mathcal{P}^*_{2\tau - s,\tau} \mathcal{P}_{0,\tau} \mathcal{P}^{\ast}_{0,2\tau - s}, & \hbox{$s \in [\tau,2\tau]$.}
                       \end{array}
                     \right.
\end{equation}
Note that $\hat{\mathcal{P}}_{s, s + 2\tau}$ is self-adjoint for $s = k \tau$, $k \in \mathbb{N}\cup\{0\}$.

\subsection{The time-augmented generator and evolution family}
We now turn to the augmented reflected system
\begin{equation}
\label{eq:aug1}
	\left\{ \begin{aligned}
		d \hat{\theta}_t &= 1 dt \\
		d \hat{x}_t &= \hat{v}(\hat{\theta}_t, \hat{x}_t) dt + \ep d \hat{w}_t
	\end{aligned} \right.
\end{equation}
in $\bm{\hat{X}} :=2\tau S^1 \times X$, and note that $(\hat{w}_t)_{t\ge 0}$ is a standard Wiener process in $\R^d$;  in particular it is not constructed by time reflection.
We define augmented versions of $\hat x_t,\hat v$, $\ep$, and $\hat w_t$, denoting them with bold symbols:
\begin{equation*}
	\bm{\hat{x}}_t := (\hat{\theta}_t , \hat{x}_t), \quad \bm{\hat{v}}(\bm{x}) := (1,\hat{v}(\theta,x))\text{ for }\bm{x} = (\theta,x) \in\bm{\hat{X}}, \quad \bm{\varepsilon}  := \begin{pmatrix} 0_{1\times 1} & 0_{1\times d} \\ 0_{d\times 1} & \ep I_{d\times d} \end{pmatrix},
\end{equation*}
and $\bm{\hat{w}}_t$ is a $d+1$ dimensional standard Wiener process. The augmented system is
\begin{equation*}
	d \bm{\hat{x}}_t = \bm{\hat{v}}(\bm{\hat{x}}_t) dt + \bm{\varepsilon} d \bm{\hat{w}}_t\,.
\end{equation*}
Considering the time-periodic version of problem~\eqref{FP2} with $\hat{v}$ on $\bm{\hat{X}}$ we formulate a Fokker--Planck equation in augmented space.
To avoid confusion with the (new) state variable $\theta$, we write dependence on time $t$ as a subscript of the augmented function, i.e., $\bm{f}_t\, :\,  \bm{\hat{X}}\to\mathbb{R}$ for all $t\ge 0$. The augmented Fokker--Planck equation is
\begin{equation}
\label{augFP}
	\partial_t \bm{f}_t(\bm{x}) = - \text{div}_{\bm{x}} \big( \bm{\hat{v}} (\bm{x}) \bm{f}_t(\bm{x}) \big) + \Delta_{\bm{x}} \big(\frac{\bm{\varepsilon}^2}{2} \bm{f}_t(\bm{x})\big) \; .
\end{equation}
Note there is no diffusion in the $\theta$ direction, as per the definition of $\bm{\varepsilon}$.

We will now consider the augmented Fokker--Planck equation as a linear differential equation in the space $L^2(\bm{\hat{X}})$. Its right-hand side is given by the so-called (augmented) \emph{infinitesimal generator} $\bm{\hat G} \, : \, \mathcal{D}(\bm{\hat{G}}) \subset L^2(\bm{\hat{X}}) \rightarrow L^2( \bm{\hat{X}})$, with \emph{domain} $\mathcal{D}(\bm{\hat{G}})$ defined as the subspace of $L^2(\bm{\hat{X}})$ on which the generator is well-defined in terms of semigroup theory~\cite{Paz83,EnNa00}.
The augmented Fokker--Planck equation \eqref{augFP} in augmented space then reads as
\begin{equation}
\label{augG}
\begin{aligned}
	\partial_t \bm{f}_t &= \bm{\hat{G}} \bm{f}_t \text{ on } \big((0,\tau)\cup (\tau,2\tau)\big) \times X, \\
	\dfrac{\partial \bm{f}_t}{\partial \bm{n}} &= 0 \text{ on } \big((0,\tau)\cup (\tau,2\tau)\big) \times \partial X.
\end{aligned}
\end{equation}
We may also write (\ref{augFP}) and (\ref{augG}) in terms of the non-autonomous (``unaugmented'') dynamics:
\begin{equation}\label{eq:aug_gen}
\begin{aligned}
	\big(\bm{\hat{G} f \big) (x)} &= - \partial_{\theta} \bm{f}(\theta,x) + \big( \hat{G}(\theta) \bm{f}(\theta , \cdot) \big) (x)\\
	&= - \partial_{\theta} \bm{f}(\theta,x) - \text{div}_{x} \big( \hat{v} ( \theta,x) \bm{f}(\theta , x ) \big) + \frac{\varepsilon^2}{2} \Delta_x \bm{f}(\theta,x),
\end{aligned}
\end{equation}
where $\hat{G}({\theta})$ is the right-hand side operator of~\eqref{FP2} at time~$t=\theta\in 2\tau S^1$, i.e., the time-$\theta$ differential operator of the Fokker--Planck equation (on $[0,2\tau]$).

\begin{remark}
\quad
\begin{enumerate}[(a)]
\item The periodicity in $\theta$ and the boundary conditions from \eqref{augG} are, as is common in semigroup theory, encoded in the domain of $\smash{\bm{\hat{G}}}$. This domain $\smash{ \mathcal{D}(\bm{\hat{G}}) }$ enforces continuity  conditions  in $\theta$ at~$0,\tau, 2\tau$.

\item For the purposes of the current work we will not require the well-posedness of the problem~\eqref{augG}. We will only need the operator $\bm{\hat{G}}$ defining the right-hand side of this equation, and its relation to the transfer operator family $\hat{\mathcal{P}}_{s,s+t}$, $s<t$. This will be the focus of section~\ref{ssec:augEfun}.  For additional theory for these augmented problems, objects, and related results we refer the reader to~\cite{Chicone1999}.


\item Any non-constant solution $\bm{f} \, : \, (t,\theta ,x) \mapsto \bm{f}_t(\theta,x)$ to \eqref{augG} has three input variables, $t$, $\theta$ and $x$, and may have different regularity properties in each variable. In the following we will focus on eigenfunctions of $\bm{\hat{G}}$ that are of course constant in $t$.

\end{enumerate}
\end{remark}
We now comment on the crucial connection between solutions of~\eqref{augG} and $\hat{\mathcal{P}}_{s,s+t}$, neglecting the issue of solvability.
Note that the stochastic augmented differential equation \eqref{eq:aug1} allows for evolving the non-autonomous equation \eqref{dynsystem} from any initial time $s$ by setting $\theta_0 = s$. In an analogous manner, the augmented Fokker--Planck equation \eqref{augG} with initial condition $\bm{f}_0$ evolves \emph{every} initial condition $ \bm{f}_0(s,\cdot) $, $s\in [0,2\tau)$,---i.e., a configuration of initial conditions---by the non-autonomous reflected Fokker--Planck equation~\eqref{FP2}.
More precisely, the following holds for the evolution of \eqref{augG}:
\begin{equation}
\label{eq:evoFamily}
	\Big(e^{t\bm{\hat{G}}} \bm{f}_0 \Big) (\theta+t \!\!\!\!\mod 2\tau,\cdot ) = \bm{f}_t(\theta + t \!\!\!\! \mod 2\tau,\cdot) = \hat{\mathcal{P}}_{\theta,\theta+t} \big( \bm{f}_0(\theta, \cdot) \big).
\end{equation}


In the terminology of semigroup theory~\cite{Chicone1999,EnNa00} the solution operators of \eqref{augG}, here formally denoted by $\smash{ (e^{t\bm{\hat{G}}} )_{t\ge 0} }$, form an \emph{evolution semigroup} (or Howland semigroup), 
and it is given exactly by~\eqref{eq:evoFamily}. Informally, the action of $\smash{ e^{t\bm{\hat{G}}} }$ in the context of $\hat{\mathcal{P}}_{\theta,\theta +t}$ can be described as follows. On the left-hand side of \eqref{eq:evoFamily}, $\smash{ e^{t\bm{\hat{G}}} }$ takes the initial configuration $\bm{f}_0$ (on all of $\bm{\hat{X}}$) and evolves the entire configuration for the time duration $t$, to obtain $\bm{f}_t$.
The result is then evaluated at the $\theta + t \mod 2\tau$ fiber.
The $\theta+t  \mod 2\tau$ fiber of $\bm{f}_t$ corresponds to the $\theta$ fiber of $\bm{f}_0$ evolved for time $t$ due to the constant \textit{drift} in the $\theta$ variable: $-\partial_{\theta}$, see \eqref{eq:aug_gen}. That equation \eqref{eq:evoFamily} indeed gives the solutions to \eqref{augG}, is a conseqence of \cite[Theorem~6.20]{Chicone1999} adapted to the concatenation of the forward and backward evolutions described by the reflected system~\eqref{FP2}. As we will not require a result of this generality, we omit the details. For our purposes it will be sufficient to consider the special case, where $\bm{f}_0 = \bm{f}$ is an eigenfunction of~$\bm{\hat{G}}$. This is done next.

\subsection{Eigenfunctions of the time-augmented generator}
\label{ssec:augEfun}

Analogously to \cite[Lemma 22]{FK17} the following result holds. It can be obtained from~\eqref{eq:evoFamily} by noting that every eigenpair $(\mu,\bm{f})$ of $\bm{\hat{G}}$ gives a solution to \eqref{augG} by~$\bm{f}_t = e^{\mu t}\bm{f}$. However, to highlight the intuitive connection between the augmented generator and the non-autonomous problem, we prove the following proposition by other means.
\begin{proposition}\label{augmen}
Let $\bm{f}$ be an eigenfunction of $\bm{\hat{G}}$ corresponding to the eigenvalue $\mu \in \mathbb{C}$.
One has then
	\begin{equation}\label{eq:eigenfunction}
		\hat{\mathcal{P}}_{s,s+t} \bm{f}(s,\cdot)=e^{\mu t}\bm{f}(s+t \!\!\!\!\mod  2\tau , \cdot )
	\end{equation}
	for all $s \in 2\tau S^1$ and $t \geq 0$.
\end{proposition}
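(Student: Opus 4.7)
My plan is to reduce the claimed identity to a uniqueness statement for the non-autonomous Fokker--Planck Cauchy problem driven by $\hat v$. First, I would unpack the eigenvalue equation $\bm{\hat G}\bm{f}=\mu\bm{f}$ using the decomposition \eqref{eq:aug_gen}. Evaluated at a generic $(\theta,x)$, this gives
$$\partial_\theta \bm{f}(\theta,\cdot) \;=\; \hat G(\theta)\bm{f}(\theta,\cdot) \;-\; \mu\,\bm{f}(\theta,\cdot)\qquad \text{on }(0,\tau)\cup(\tau,2\tau),$$
so on each smoothness component of $\hat v$ the $\theta$-derivative of $\bm{f}$ along the constant drift is expressed directly through the spatial differential operator $\hat G(\theta)$.

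Next, I would define the candidate $u(t,\cdot) := e^{\mu t}\,\bm{f}\big((s+t)\bmod 2\tau,\cdot\big)$ and differentiate in $t$ via the chain rule. Substituting the identity from the previous step and using linearity of $\hat G(s+t)$ gives
$$\partial_t u(t,\cdot) \;=\; e^{\mu t}\bigl(\mu\,\bm{f}(s+t,\cdot) + \partial_\theta\bm{f}(s+t,\cdot)\bigr) \;=\; e^{\mu t}\hat G(s+t)\bm{f}(s+t,\cdot) \;=\; \hat G(s+t)\,u(t,\cdot),$$
with initial datum $u(0,\cdot)=\bm{f}(s,\cdot)$. By construction in Section~\ref{ssec:reflectP} the function $t\mapsto \hat{\mathcal P}_{s,s+t}\bm{f}(s,\cdot)$ is the (unique) $L^2$-solution of exactly this non-autonomous Cauchy problem, so uniqueness immediately yields \eqref{eq:eigenfunction}. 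This amounts to a direct elementary verification of the evolution-semigroup correspondence for our reflected process, bypassing the general machinery of \cite{Chicone1999}.

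The main obstacle is regularity at the switching times $\theta\in\{0,\tau,2\tau\}$, where $\hat v$ jumps: the $\theta$-derivative of $\bm{f}$ in the eigenvalue equation must then be interpreted one-sidedly, and the chain-rule computation above should be carried out separately on intervals of $t$ on which $s+t$ avoids these points. Here one uses that membership in $\mathcal D(\bm{\hat G})$ enforces $L^2$-continuity of $\theta\mapsto\bm{f}(\theta,\cdot)$ at $0,\tau,2\tau$ (Remark~(a) following \eqref{eq:aug_gen}), combined with the fact that the family $\hat{\mathcal P}_{s,s+t}$ was defined in \eqref{Phat} precisely so as to concatenate the forward Fokker--Planck evolution with its reflected counterpart in an $L^2$-continuous way across $t=\tau$. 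Together with the periodic continuation in $\theta$ also encoded in $\mathcal D(\bm{\hat G})$, these continuity properties allow one to glue the local solutions across each switching time and propagate the equality to all $s\in 2\tau S^1$ and $t\geq 0$.
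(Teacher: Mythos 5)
Your proposal is correct and follows essentially the same strategy as the paper's proof: both unpack the eigenvalue equation via \eqref{eq:aug_gen} to obtain $\partial_\theta\bm f=(\hat G(\theta)-\mu)\bm f$, invoke Theorem~\ref{thm:NACP} piecewise on $[0,\tau]$ and $[\tau,2\tau]$ for well-posedness and $L^2$-continuity at the switching times, and then conclude by uniqueness of solutions to a non-autonomous Cauchy problem. The only difference is cosmetic: the paper verifies that $e^{-\mu t}\hat{\mathcal P}_{s,s+t}\bm f(s,\cdot)$ solves the $\mu$-shifted equation \eqref{shiftFP}, whereas you verify that $e^{\mu t}\bm f(s+t,\cdot)$ solves the unshifted Fokker--Planck equation; multiplying by $e^{\mp\mu t}$ translates one formulation into the other.
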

\begin{proof}
	We will prove \eqref{eq:eigenfunction} following ideas from \cite{FK17}. First we apply Theorem \ref{thm:NACP} piecewise on $[0,\tau]$ and $[\tau,2\tau]$ concerning well-posedness and regularity. Therefore we consider the original problem \eqref{FP0} and the reflected, shifted, time-reversed problem \eqref{FP2}. Now theorem \ref{thm:NACP} guarantees for any initial condition $f_0 \in L^p(X)$, $p \in (1,\infty)$, the unique existence of a function $f$ with the regularity
\begin{equation*}
	f \in C\big([0,2\tau];L^p(X)\big) , \quad f|_{[0,\tau]} \in C^1\big((0,\tau];L^p(X)\big), \quad f|_{[\tau,2\tau]} \in C^1\big((\tau,2\tau];L^p(X)\big)
\end{equation*}
and the properties
\begin{equation*}
	f|_{[0,\tau]} \text{ solves \eqref{FP0}}, \quad f|_{[\tau,2\tau]} \text{ solves \eqref{FP2}}, \quad f(t) = \hat{\mathcal{P}}_{s,t} f(s) \; , s<t \in [0,2\tau] \; .
\end{equation*}
Further $f(t) \in \mathcal{D}(\hat{G}(t))$ holds for all $t \in (0,2\tau]$.
Now we can proceed as in \cite[Lemma 22]{FK17}.
\label{proof:aug}
Let $\mu \in \mathbb{C}$ and $\bm{f} \in \mathcal{D}(\bm{\hat{G}})$ with $ \bm{\hat{G}}\bm{f} = \mu \bm{f}$. According to the construction above \eqref{eq:aug_gen} we know
\begin{equation*}
	\mu \bm{f} (\theta,\cdot ) = \bm{\hat{G}}\bm{f} (\theta, \cdot ) = -\partial_{\theta} \bm{f} (\theta , \cdot) + \hat{G}(\theta) \bm{f} (\theta,\cdot)
\end{equation*}
and this implies, in accordance with \eqref{augG}, for all $\theta \in 2\tau S^1\backslash \{0,\tau\}$
\begin{equation}
\label{shiftFP}
	\partial_{\theta} \bm{f} (\theta,\cdot ) = ( \hat{G}(\theta) - \mu ) \bm{f}(\theta,\cdot).
\end{equation}
Now $\hat{\mathcal{P}}_{\theta,\theta+t}$ is the evolution operator to the evolution equation
\begin{equation*}
	\partial_{\theta} u(\theta) = \hat{G} (\theta) u(\theta)
\end{equation*}
Therefore the function $e^{-\mu t} \hat{\mathcal{P}}_{\theta,\theta+t} \bm{f}(\theta)$ solves \eqref{shiftFP} uniquely and \ref{thm:NACP} guarantees continuity in $\theta$. Therefore we can connect the eigenfunctions of the augmented reflected generator $\bm{\hat{G}}$ with the evolution given by $\hat{\mathcal{P}}_{s,s,+t}$ for all $s,t$ as stated in the claim.
\end{proof}
Let $\mu$ be an eigenvalue of $\bm{\hat{G}}$ with an eigenfunction $\bm{f}$.
Inserting  $s = 0$ and $t = 2 \tau$ into Proposition \ref{augmen} yields
\begin{equation}\label{eq:eig_func-sing}
	\hat{\mathcal{P}}_{0,2\tau} \bm{f}(0,\cdot)=e^{\mu 2\tau} \bm{f}(2 \tau,\cdot ).
\end{equation}
This is a spectral mapping theorem type of result, as it connects the eigenvalues and eigenfunctions of the evolution operator $\hat{\mathcal{P}}_{s,s+2\tau}$ with those of an associated (infinitesimal) generator~$\bm{\hat{G}}$. We refer the reader to standard literature on classical results for operator semigroups~\cite{Paz83,EnNa00}.

Recalling that  $\hat{\mathcal{P}}_{0,2\tau}=\mathcal{P}_{0,\tau}^{\ast} \mathcal{P}_{0,\tau}$ is a compact self-adjoint positive operator, it must be that $e^{\mu 2\tau}=\sigma^2$ for some $0 < \sigma \in \R$.
This implies
\begin{equation}\label{eq:eig_im}
	0 < \sigma = \left( e^{2\mu \tau} \right)^{\frac{1}{2}} = ((e^{\mu \tau})^2)^{\frac{1}{2}} = e^{\tau \Re(\mu)} \left( (\cos ( \tau \Im(\mu)) + i \sin (\tau \Im (\mu)) )^2\right)^{\frac{1}{2}},
\end{equation}
from which it follows that $\Im (\mu) = \frac{k\pi}{\tau} $ for some $k \in \mathbb{Z}$.

\begin{remark}
Theorem~\ref{thm:NACP} guarantees that for initial conditions $f_s \in \mathcal{D}(G(s))$ the solution $\hat{\mathcal{P}}_{s,s+t}f_s = f(t) $ is in $t$ a continuous mapping to the domain of the generator, $\mathcal{D}(G(s+t)) = \mathcal{D}_p$. Theorem~\ref{thm:regularity} further gives for each eigenfunction $\bm{f}$ that $\bm{f}: \theta\mapsto \bm{f}(\theta,\cdot) \in C(2\tau S^1 ; \mathcal{D}_p)$ and that $\bm{f} \in C(\bm{\hat{X}})$. This regularity is utilized in the proof of Theorem~\ref{thm:cohratio} below.
\end{remark}

\subsection{Coherent families of sets}
In the specific case where the velocity field $v$ is periodic in time, \cite{FK17} shows that the families of sets
\begin{equation*}
	A_{\theta}^+ := \left\{ \bm{f}(\theta, \cdot ) \geq 0 \right\} \qquad
	A_{\theta}^- := \left\{ \bm{f}(\theta , \cdot ) \le 0 \right\}
\end{equation*}
have an escape rate (see \cite[Definition 8]{FK17}) of at most $\text{Re}(\mu_2)$, where $\mu_2$ is the first nontrivial eigenvalue of $\bm{\hat{G}}$ corresponding to the eigenfunction $\bm{f}$.
Because we consider the dynamics on a finite time interval, this notion of escape rate is replaced by the concept of a \emph{coherence ratio} \cite{Froyland2013}.
In the general setting  of aperiodic $v$ we will quantify the coherence of families $\{ A^{\pm}_{\theta}\}_{\theta \in [0,\tau]}$ and provide a construction of highly coherent families with associated rigorous coherence bound.
\begin{definition}[Coherence ratio]
Let~$\{A_t\}_{t\in[0,\tau]}$ be a family of measurable sets. Denote by~$\mathbb{P}_{m}$ the law of the process~$\{x_t\}_{t \in [0,\tau]}$ generated by the SDE~\eqref{dynsystem} initialised with $x_0\sim m$, where $m$ denotes normalised Lebesgue measure on~$X$. For $m (A_0) \neq 0$ we define the  coherence ratio of the family $\{A_t\}_{t\in [0,\tau]}$ as
\begin{equation}\label{eq:cohratio}
	\rho_m(\{A_t\}_{t\in [0,\tau]}) = \frac{\mathbb{P}_{m}\left(\cap_{t\in[0,\tau]}\{x_t\in A_t\}\right)}{m(A_0)}\,.
\end{equation}
\end{definition}
It was shown in~\cite[Appendix A.6]{FK17} that for a family of sets with sufficient regularity (called ``sufficient niceness'' therein) the quantity~\eqref{eq:cohratio} is well defined. Here we will alleviate this requirement entirely by showing regularity of the augmented eigenfunctions~$\bm{f}$, and showing that this is sufficient to prove the desired results.

Theorem \ref{thm:cohratio} makes a link between the coherence of a particular family of sets defined by zero super/sublevel sets of an eigenfunction of $\bm{\hat{G}}$ and the corresponding eigenvalue $\mu$.
It shows that the probability of a trajectory remaining in a family of sets constructed from the positive and negative parts of eigenfunctions of $\bm{\hat{G}}$ decays no faster than the rate given by the corresponding eigenvalues.
This result extends to aperiodically driven continuous-time systems, similar results for autonomous systems in discrete time \cite{FrSt10} and continuous time \cite{FJK13}, and periodically driven dynamics in continuous time~\cite{FK17}.
\begin{theorem}\label{thm:cohratio}
Let~$\bm{\hat{G}}\bm{f} = \mu\bm{f}$ with~$\mu<0$. If~$\bm{f}$ is scaled such that~$\|\bm{f}(\tau,\cdot)\|_{L^1}=2$, then it holds for the family~$\{A_t^\pm\}_{t \in [0,\tau]}$ of sets with~$A_t^\pm = \{\pm\bm{f}(t,\cdot)\ge 0\}$ that
\begin{equation}\label{eq:cohratbound}
	\rho_m(\{A_t^\pm\}_{t\in [0,\tau]}) \ge \frac{e^{\mu\tau}}{\|\bm{f}(0,\cdot)\|_{L^{\infty}} |A_0^\pm|}\,,
\end{equation}
where $|A|$ denotes the non-normalised Lebesgue measure of the set $A$.
In particular, eigenfunctions at eigenvalues $\mu\approx 0$ yield families of sets with high coherence ratio.
\end{theorem}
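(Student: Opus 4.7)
The plan is to reinterpret the eigenfunction $\bm{f}$ through the Fokker--Planck evolution and exploit the fact that the zero set of $\bm{f}(t,\cdot)$ provides a moving Dirichlet boundary for the killed (absorbing) forward semigroup on the space--time tube $\bigcup_{t\in[0,\tau]}\{t\}\times A_t^+$. From the eigenvalue relation $\partial_\theta\bm{f}(\theta,\cdot) = (\hat{G}(\theta)-\mu)\bm{f}(\theta,\cdot)$ established in the proof of Proposition~\ref{augmen}, the rescaled function $h(t,x):= e^{\mu t}\bm{f}(t,x)$ satisfies the genuine forward Fokker--Planck equation $\partial_t h = \hat{G}(t)h$ on $(0,\tau)\times X$ with Neumann data on $\partial X$. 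By construction $h\geq 0$ on $A_t^+$ and $h=0$ on the moving interior boundary $\partial A_t^+ = \{\bm{f}(t,\cdot)=0\}$. I will treat $\{A_t^+\}$; the argument for $\{A_t^-\}$ is identical upon replacing $\bm{f}$ by $-\bm{f}$.

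A preliminary step fixes a mass normalization: applying Proposition~\ref{augmen} with $s=0$, $t=2\tau$ together with mass preservation of $\hat{\mathcal{P}}_{0,2\tau}$ and the periodicity $\bm{f}(2\tau,\cdot)=\bm{f}(0,\cdot)$ yields $(e^{2\mu\tau}-1)\int_X \bm{f}(0,\cdot)\, dx = 0$, and the same reasoning at each fiber gives $\int_X\bm{f}(\theta,\cdot)\, dx = 0$ for every $\theta$; combined with $\|\bm{f}(\tau,\cdot)\|_{L^1}=2$ this yields $\|\bm{f}^{\pm}(\tau,\cdot)\|_{L^1}=1$. Next, I identify $h|_{A_t^+}$ with the Dirichlet-killed Fokker--Planck solution started from $\bm{f}^+(0,\cdot)$: both satisfy the same parabolic PDE on the moving tube, vanish on the lateral boundary $\partial A_t^+$, satisfy Neumann data on $\partial X$, and coincide at $t=0$; uniqueness of the initial--boundary value problem closes the identification. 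The standard probabilistic representation of the killed semigroup---the surviving mass equals the integrated survival probability against the absorbing barrier---then reads
\begin{equation*}
\int_{A_\tau^+}h(\tau,x)\, dx \;=\; \int_{A_0^+}\bm{f}^+(0,y)\,\mathbb{P}\!\left(\bigcap_{u\in[0,\tau]}\{x_u\in A_u^+\}\,\middle|\,x_0=y\right)dy.
\end{equation*}
The left-hand side equals $e^{\mu\tau}\|\bm{f}^+(\tau,\cdot)\|_{L^1}=e^{\mu\tau}$. Bounding $\bm{f}^+(0,y)\leq\|\bm{f}(0,\cdot)\|_{L^\infty}$ in the right-hand side and converting $\int_{A_0^+}(\cdots)\,dy = |X|\int_{A_0^+}(\cdots)\, m(dy)$ produces the upper bound $\|\bm{f}(0,\cdot)\|_{L^\infty}\,|X|\,\mathbb{P}_m\!\left(\cap_{u\in[0,\tau]}\{x_u\in A_u^+\}\right)$; dividing by $m(A_0^+)=|A_0^+|/|X|$ gives the claimed bound~\eqref{eq:cohratbound}.

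The main obstacle is the rigorous identification of $h|_{A_t^+}$ with the killed-semigroup solution on the time-varying domain: one must verify well-posedness and uniqueness for the Dirichlet--Neumann initial--boundary value problem for the Fokker--Planck equation on the tube $\{(t,x): x\in A_t^+\}$, whose lateral boundary $\partial A_t^+$ moves in time, and justify the probabilistic representation of the surviving mass in terms of the diffusion's first exit time. This is exactly where the regularity of the augmented eigenfunction $\bm{f}$ (advertised in the paragraph preceding the theorem) becomes essential: smoothness of $\bm{f}$ in both $\theta$ and $x$ ensures sufficient regularity of $\partial A_t^+$ (via the implicit function theorem wherever $\nabla_x\bm{f}\neq 0$) to place the problem in a standard parabolic framework and to invoke the Feynman--Kac representation of the killed semigroup. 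This is also the step that replaces the ``sufficient niceness'' hypothesis of~\cite{FK17}, thereby alleviating it as announced.
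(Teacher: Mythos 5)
Your route through the killed (Dirichlet) Fokker--Planck IBVP on the moving tube is conceptually appealing and, when it applies, gives the stated bound: the computations ($\partial_t h=\hat G(t)h$, $\|\bm{f}^\pm(\tau,\cdot)\|_{L^1}=1$, bounding $\bm{f}^+(0,\cdot)$ by the sup norm, converting Lebesgue to $m$) are all correct. However, the step you single out as ``the main obstacle'' is exactly where the argument has a genuine gap, and it is \emph{not} the step that alleviates the niceness hypothesis of \cite{FK17} --- it is the step that \emph{requires} it. The identification of $h|_{A_t^+}$ with the killed semigroup solution hinges on uniqueness for a parabolic initial--boundary value problem on the space--time tube $\bigcup_t\{t\}\times A_t^+$, which in turn needs the lateral boundary $\partial A_t^+=\{\bm{f}(t,\cdot)=0\}$ to be a sufficiently regular (e.g.\ Lipschitz-in-time, $C^1$-in-space) hypersurface. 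The regularity actually established in the paper is joint continuity $\bm{f}\in C([0,2\tau]\times\overline{X})$ (Corollary~\ref{cor:conti}) and Sobolev/H\"older regularity of each slice; none of this guarantees that $\nabla_x\bm{f}$ is nonvanishing on the zero level set, so the level set may be degenerate (fat, non-manifold, or changing topology in $t$), and the implicit function theorem argument you invoke simply does not apply there. This is precisely the ``sufficient niceness'' assumption of \cite{FK17} in disguise, and you have not shown how to remove it.

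The paper's own proof (Appendix~\ref{app:cohratio_proof}) circumvents the IBVP entirely, and that is what makes the removal of the niceness hypothesis possible. It picks a dense sequence of times $(r_i)$ with $r_1=\tau$, defines the events $\mathcal{E}_n=\bigcap_{i\le n}\{x_{r_i}\in A_{r_i}^+\}$, and uses continuity of the sample paths together with continuity of $\bm{f}$ to get $\mathcal{E}_n\downarrow\mathcal{E}$. It then works with the \emph{signed} measure $d\nu=\bm{f}(0,\cdot)\,dm$ and decomposes $\{x_\tau\in A_\tau^+\}$ into $\mathcal{E}_n$ plus disjoint ``first-exit-at-$r_j$'' events whose $\mathbb{P}_\nu$-measure is nonpositive (because at time $r_j$ the evolved signed density $e^{\mu r_j}\bm{f}(r_j,\cdot)$ is $\le 0$ outside $A_{r_j}^+$). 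This yields $e^{\mu\tau}\le\mathbb{P}_\nu(\mathcal{E}_n)$ for each $n$, and passing to the limit $n\to\infty$ gives the bound with only pointwise continuity of $\bm{f}$ as input, no hypersurface regularity, no Feynman--Kac, no killed semigroup. Your approach is therefore a valid alternative \emph{under additional hypotheses on the zero level sets}, but as written it does not reach the theorem as stated, and it contradicts the paper's stated aim of dispensing with niceness. If you want to rescue the killed-semigroup route you would need either to prove genuine transversality of the level sets or to mollify/approximate $\bm{f}$ and pass to the limit --- at which point the paper's discretize-in-time argument is both shorter and strictly weaker in its assumptions.
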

\begin{proof}
See Appendix~\ref{app:cohratio_proof}. We note that by Corollary \ref{cor:conti}  $\bm{f}$ is continuous on $\bm{\hat{X}}$.
\end{proof}
Intuitively, the left hand side of (\ref{eq:cohratbound}) quantifies the likelihood of \emph{escape} from the family of sets, and the right hand side of (\ref{eq:cohratbound}) is a  (scaled) measure of \emph{mixing};  the bound says that the likelihood of escape is less than the mixing incurred over the same time duration.
The bound is not intended to be sharp;  we remark that one could optimise the level set cutoff to improve the ratio $\rho_m$, as has been done in previous work on coherent sets \cite{froyland_padberg_09}.

In Section \ref{ssec:BickleyCohset}, we consider the dominant 6 eigenvectors of $\bm{\hat{G}}$ and apply sparse eigenbasis approximation (SEBA) \cite{seba} to find a sparsity-inducing rotation of this eigendata and separate individual slow escape / slow mixing subdomains.
The following proposition generalises Theorem \ref{thm:cohratio} so that it may apply to vectors formed from linear combinations of eigenvectors.
\begin{proposition}\label{thm:cohratio2}
Let~$\bm{\hat{G}}\bm{f}_i = \mu\bm{f}_i$, $i=1,\ldots,k\in\mathbb{N}$, with~$\mu_k \le\ldots\le\mu_1\le 0$.
For $\bm{f} = \sum_{i=1}^k \alpha_i \bm{f}_i$ with $\alpha_i\in\R$, the statement of Theorem~\ref{thm:cohratio} remains true with $\mu = \mu_k$ if
\begin{equation}
\label{eq:contribs}
\alpha_i \int_{A_{\tau}^+} \bm{f}_i(\tau,\cdot)\,dm \ge 0\quad\text{for }i=1,\ldots,k.	
\end{equation}
\end{proposition}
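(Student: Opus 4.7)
The plan is to reduce Proposition~\ref{thm:cohratio2} to Theorem~\ref{thm:cohratio} by verifying that the single place where $\bm{f}$ being an eigenfunction enters the proof of Theorem~\ref{thm:cohratio} continues to hold, in a weakened but usable form, for a linear combination under hypothesis~\eqref{eq:contribs}. Inspecting the argument in Appendix~\ref{app:cohratio_proof}, the eigenfunction property $\bm{\hat G}\bm{f} = \mu\bm{f}$ is used only via Proposition~\ref{augmen} (applied with $s=0$, $t=\tau$), which yields $\hat{\mathcal{P}}_{0,\tau}\bm{f}(0,\cdot) = e^{\mu\tau}\bm{f}(\tau,\cdot)$, and that identity feeds into the downstream estimates only after pairing with $\mathbf{1}_{A_\tau^+}$.

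First, by linearity of $\hat{\mathcal{P}}_{0,\tau}$ and Proposition~\ref{augmen} applied to each $\bm{f}_i$, one obtains for $\bm{f} = \sum_{i=1}^k \alpha_i \bm{f}_i$ the identity
\[
\int_{A_\tau^+}\hat{\mathcal{P}}_{0,\tau}\bm{f}(0,\cdot)\,dm \;=\; \sum_{i=1}^k e^{\mu_i\tau}\left(\alpha_i\int_{A_\tau^+}\bm{f}_i(\tau,\cdot)\,dm\right).
\]
Each bracketed factor is non-negative by hypothesis~\eqref{eq:contribs}; combined with $\mu_i \ge \mu_k$ (hence $e^{\mu_i\tau} \ge e^{\mu_k\tau}$ since $\tau>0$), this yields
\[
\int_{A_\tau^+}\hat{\mathcal{P}}_{0,\tau}\bm{f}(0,\cdot)\,dm \;\ge\; e^{\mu_k\tau}\int_{A_\tau^+}\bm{f}(\tau,\cdot)\,dm,
\]
which is the analogue of the eigenfunction identity with $e^{\mu\tau}$ replaced by the smaller exponential $e^{\mu_k\tau}$, and equality weakened to exactly the inequality direction needed by the subsequent estimates.

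The remainder of the argument of Theorem~\ref{thm:cohratio} then ports verbatim: (i) the upper estimate of $\int_{A_\tau^+}\hat{\mathcal{P}}_{0,\tau}\bm{f}(0,\cdot)\,dm$ in terms of $\|\bm{f}(0,\cdot)\|_{L^\infty}\,\mathbb{P}_m(\cap_{t\in[0,\tau]}\{x_t\in A_t^+\})$ depends only on pointwise control of $\bm{f}(0,\cdot)$ and on duality of $\hat{\mathcal{P}}_{0,\tau}$, not on any eigenfunction structure; (ii) the normalisation $\|\bm{f}(\tau,\cdot)\|_{L^1}=2$ together with $\int_X \bm{f}(\tau,\cdot)\,dm = 0$, which still holds since each $\bm{f}_i$ is orthogonal to constants, fixes $\int_{A_\tau^+}\bm{f}(\tau,\cdot)\,dm$ at the same value as in Theorem~\ref{thm:cohratio}; (iii) the continuity of $\bm{f}$ on $\bm{\hat X}$ needed for $A_\tau^\pm$ to be bona fide sets follows from Corollary~\ref{cor:conti} applied to each $\bm{f}_i$ and linearity. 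The $A_t^-$ case is symmetric: on $A_\tau^-$ each bracketed factor becomes non-positive (again by orthogonality of each $\bm{f}_i$ to constants), and the parallel inequality goes through with both sides of comparable negative sign.

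The main (and essentially only) step requiring care is this sign-alignment: verifying that hypothesis~\eqref{eq:contribs} is precisely what is needed so that replacing $e^{\mu_i\tau}$ by the smaller $e^{\mu_k\tau}$ weakens the estimate in the right direction rather than breaking it. Once that inequality is secured, the conclusion~\eqref{eq:cohratbound} follows with $\mu = \mu_k$ without any further change to the argument.
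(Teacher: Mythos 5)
Your argument reproduces the paper's own proof of Proposition~\ref{thm:cohratio2}: it isolates the single modification to the chain~\eqref{eq:eventsplit}, replacing the eigenvalue identity by the lower bound $e^{\mu_k\tau}\int_{A_\tau^+}\bm f(\tau,\cdot)\,dm \le \int_{A_\tau^+}\mathcal{P}_{0,\tau}\bm f(0,\cdot)\,dm$ (derived, exactly as you do, from Proposition~\ref{augmen}, linearity, hypothesis~\eqref{eq:contribs}, and $e^{\mu_i\tau}\ge e^{\mu_k\tau}$), and declares the remaining steps unchanged. The one remark I would add concerns your bullet~(i): the step $p_j\le 0$ in the proof of Theorem~\ref{thm:cohratio} is not a pure duality fact --- it uses that the $r_j$-marginal $\mathcal{P}_{0,r_j}\bm f(0,\cdot)=e^{\mu r_j}\bm f(r_j,\cdot)$ is non-positive on $(A_{r_j}^+)^c$, and for the linear combination this marginal is $\sum_i\alpha_i e^{\mu_i r_j}\bm f_i(r_j,\cdot)$, which is no longer a scalar multiple of $\bm f(r_j,\cdot)$; since the paper's proof is equally silent on this intermediate-time sign alignment, your write-up sits at the same level of detail as theirs and takes the same route.
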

The proof Proposition~\ref{thm:cohratio2} is deferred to
Appendix~\ref{app:cohratio_proof}.

In the computations performed in the next sections we will numerically approximate $\bm{\hat{G}}$, compute its upper spectrum and associated eigenfunctions, and plot super/sublevel sets of the eigenfunctions.
In Section \ref{ssec:BickleyCohset} we will additionally apply SEBA, plot the sparse basis functions, and one of the superlevel sets.



\section{Computational aspects}
\label{sec:comput}

\subsection{Numerical discretization}
\label{ssec:num_disc}

We use the ``Ulam's discretization for the generator'' approach developed in \cite{FJK13} for autonomous flows and extended in~\cite[Sections 7.2 and 7.3]{FK17} for nonautonomous flows.
In brief, referring to the above papers for further details, the Ulam discretization for the generator yields a matrix that may be interpreted as a rate matrix of a finite-state, continuous-time Markov chain, with the states corresponding to a partition of $\tau S^1\times X$ into hypercubes (hyperrectangles) in~$\mathbb{R}^{d+1}$.
The entries which correspond to rates between boxes adjacent in the temporal coordinate direction (in which the time evolution is a rigid rotation of constant velocity 1) are given by~$1/h$, where~$\tau S^1$ is discretized into intervals of length~$h$.
The entries in the remaining $d$ space directions are computed from the rate of flux out of the hypercube faces by numerical integration 
of the component of the velocity field normal to (and pointing out of) the face;  see the expression for $G_n^{\mathrm{drift}}$ in~\cite[Section 7.2]{FK17}.
The entries of the rate matrix corresponding to diffusive dynamics (in the $d$ space coordinates only, there is no diffusion in the time coordinate) are computed from a finite-difference approximation of the Laplace operator;  see the expression for $G_n^{\mathrm{diff}}$ in~\cite[Section 7.2]{FK17}.
We then set $G_n:=G_n^{\mathrm{drift}}+G_n^{\mathrm{diff}}$.
The matrix $G_n$ can also be interpreted as a rate matrix for a finite-state Markov chain;  it has an eigenvalue $0$ and its spectrum is confined to the left half of the complex plane.

The reflected velocity field $\hat{v}$ from (\ref{hatvdef}) may be substituted for the velocity field $v$ used in \cite{FK17} and the methodology of \cite{FK17} employed;  this is the approach taken in the numerical experiments below.

\begin{remark}
The computation of $G_n^{\mathrm{drift}}$ uses only the outward-pointing velocity field values on the faces of the partition elements---similarly to how the outward flux is defined through the positive part of the inner product in~\eqref{augoutflux0}---, discarding the inward-pointing parts.
Because of the reflected structure of $\hat{v}$, a slightly more efficient implementation would be to store the evaluations of the velocity field normal to hypercube faces in both directions (not only in the outward-pointing direction).
The outward-pointing components would be used on the time interval $[0,\tau]$, while the inward-pointing components would be used on the time interval $(\tau,2\tau)$, where they are outward-pointing because of the sign flip in~\eqref{hatvdef}---similarly as it happens in the proof of Proposition~\ref{prop:outflux}.
This would reduce by half the computational effort in evaluating the velocity field components normal to the hypercube faces.
However, the assembly of the generator matrices is relatively fast anyway, and we have not tried to optimize our implementation of Ulam's method for the generator in this reflected setting.
\end{remark}

\subsection{Example: Periodically driven double gyre}
\label{ssec:perDG}

We consider the periodically driven double gyre system~\cite{shadden2005definition}:
\begin{equation*}
	x'(t) = - \pi A \sin (\pi f(t,x)) \cos(\pi y)
	\qquad y'(t) = \pi A \cos ( \pi f(t,x)) \sin (\pi y) \frac{df}{dx}(t,x)
\end{equation*}
on the time interval~$[0,\tau]=[0,4]$.
The forcing is $f(t,x)= \gamma \sin(2\pi \Omega t)x^2 + (1-2\gamma \sin(2\pi t))x$ and the parameters are $A=0.25$, $\Omega=2\pi$, and $\gamma =0.25$, implying the forcing period $1$, on the spatial domain~$X= [0,2] \times [0,1]$.
This system has been a standard example of coherent sets~\cite{FrPa14}. 
The purpose of this section is to show that our method reliably computes the singular functions and values of $\mathcal{P}_{0,\tau}$;  further analysis is deferred to later sections. 
In particular, we will revisit this example in context of optimal manipulation of these coherent sets in sections \ref{ssec:incr_coh} and \ref{ssec:decr_cohDG}.

The augmented reflected generator approach with a resolution of $40 \times (100\times 50)$ and noise intensity $\varepsilon=0.1$ gives us the non-trivial dominant eigenvectors of $\bm{\hat{G}}$ at time $t=0$ shown in Figure~\ref{fig:double-eig}.
\begin{table}[htb]
\centering
\begin{tabular}{c|l||c|l||c|l||c|l}
$\mu_1$ & $ 0$ & $\mu_4$ & $-0.35061$ &$\sigma_1 $ & $1$ & $\sigma_4 $ & $0.24599$\\
\hline
$\mu_2$ & $-0.09033$ & $\mu_5 $ & $-0.44766$ & $\sigma_2 $ & $0.69674$ & $\sigma_5 $ & $0.16685$\\
\hline
$\mu_3$ & $-0.34938$ & $\mu_6 $ & $-0.45702$ &$\sigma_3 $ & $0.24720$ & $\sigma_6 $ & $0.16072$
\end{tabular}
\caption{Eigenvalues ($\mu_k$) of $\bm{\hat{G}}$ ordered in ascending magnitude and corresponding approximate singular values ($\sigma_k$) of $\mathcal{P}_{0,\tau}$ according to~\eqref{eq:eig_im}.}
\label{tab:doublegyre}
\end{table}
Ordered by ascending magnitude, the $3$rd, $4$th and $6$th eigenvalues (Table \ref{tab:doublegyre}) and eigenvectors (not shown) correspond to features also detected in~\cite{FK17}, where they were connected to complex non-companion eigenvalues---the concept of companion eigenvalues will be introduced around~\eqref{eq:companion} below. These features become less coherent, i.e., their respective real eigenvalues decrease compared with the others, as the length $\tau$ of the time interval increases.

\begin{figure}[htbp]
\centering
\begin{subfigure}{0.49\textwidth}
\centering
\includegraphics[height=27mm]{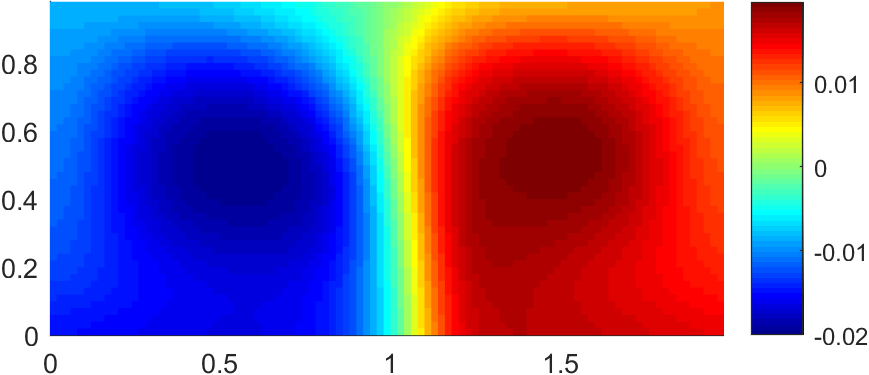}
\subcaption{$2$nd  eigenvector.}
\end{subfigure}
\begin{subfigure}{0.49\textwidth}
\centering
\includegraphics[height=27mm]{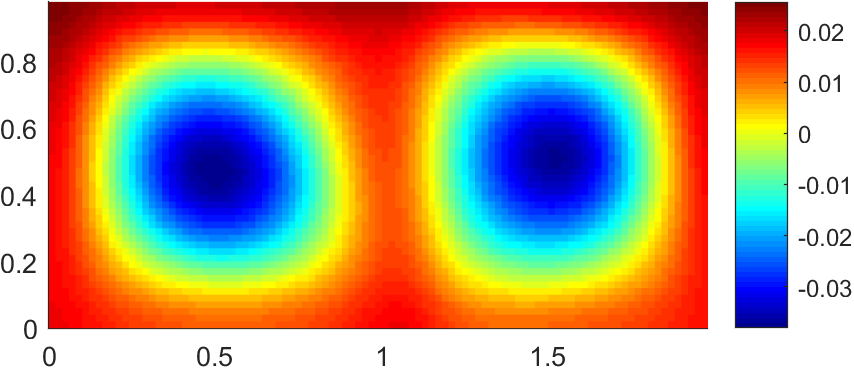}
\subcaption{$5$th eigenvector.}
\end{subfigure}

\caption{Time slice $t=0$ of the $2$nd and $5$th eigenvector of $\bm{\hat{G}}$ for the double gyre flow.}\label{fig:double-eig}
\end{figure}

\subsection{Example: Bickley jet}
\label{ssec:BickleyCohset}

We now apply the reflected augmented generator approach to a perturbed Bickley Jet~\cite{Rypina2007}. The following model describes an idealized zonal jet in a band around a fixed latitude, assuming incompressibility, on which two traveling Rossby waves are superimposed. The velocity field $v = (-\frac{\partial \Psi}{\partial y},\frac{\partial \Psi}{\partial x})$ is induced by the stream function
\begin{equation*}
	\Psi(t,x,y) = -U_0 L \tanh \left(\frac{y}{L}\right) + U_0 L \sech^2 \left( \frac{y}{L}\right) \sum_{n=2}^3 A_n \cos \left(k_n (x - c_nt)\right).
\end{equation*}
The constants are chosen according to \cite{Rypina2007}. The length unit is $Mm$ ($1 \, Mm = 10^6 m $) and the time unit is days. For the amplitudes $A_n$ and the speed of the Rossby waves $c_n$ we choose
\begin{equation*}
	c_2 = 0.205U_0, \qquad c_3 =0.461U_0, \qquad A_2 = 0.1, \qquad A_3 = 0.3, \qquad r_e = 6.371,
\end{equation*}
with $r_e$ being the Earth's radius. Further we choose
\begin{equation*}
U_0 = 5.4138, \qquad L = 1.77, \qquad k_n = \frac{2n}{r_e}.
\end{equation*}
The state space is periodic in $x$ direction and is given by $X = \pi r_e S^1 \times [-3,3]$ (in accordance with other literature), and the time interval will be chosen as $[0,\tau] = [0,9]$.
For good numerical tractability, we resolve our reflected space-time manifold with a spatially somewhat coarse $108 \times (120 \times 36)$ grid, that is uniform in space ($120\times 36$) leads to square boxes needed for isotropic diffusion) and sufficiently finely resolved in time ($108$). We choose $\varepsilon = 0.1$.

The system described above is equipped with homogeneous Dirichlet boundary conditions instead of homogeneous Neumann conditions on~$\partial X$. This leads to a slightly different spectral structure of the generator, which now generates a semigroup of sub-Markovian operators. Its leading eigenvalue is strictly less than zero. We expect Theorem~\ref{thm:cohratio} to hold in the case of Dirichlet boundary conditions. One possible theoretical justification would require ``close'' the open system that is represented by the homogeneous Dirichlet boundary conditions by introducing a virtual ``external'' state, then apply the Neumann theory to that system. The details would lead beyond the scope of this work, and will be discussed elsewhere.

We highlight that the computations we are about to perform here are different to those performed in \cite[Section 7.6]{FK17} in at least two respects.
Firstly, the Bickley jet under investigation is aperiodically driven, in contrast to the periodically driven Bicklet jet in \cite[Section 7.6]{FK17} (we use slightly different parameters in the velocity field).
Secondly, we wish to find functions that decay the least under \emph{finite-time} evolution, in contrast to the problem considered in \cite{FK17}, which sought functions that decayed at the slowest time-asymptotic ($t\to\infty$) rate under periodic driving.
In particular, even for a periodically driven Bickley jet, the finite-time question considered in the present paper is different to the infinite-time question addressed in \cite{FK17}.
Thus, even though we chose a flow interval of length~9 as in \cite{FK17}, the problem in consideration is different.

Analogously to \cite[Section 7]{FK17} our time-augmentation produces companion eigenvalues. Companion eigenmodes denote eigenmodes that are ``higher order harmonics'' of existing eigenmodes differing only in temporal modulation, and encoding the same coherence information; see below. For more details on the companion eigenvalues for the Ulam-discretization we refer to~\cite[Section 7.3]{FK17}. We will use and verify the relations derived there. Therefore we calculate the eigenvalues and vectors of $\bm{\hat{G}}$ with the smallest magnitude instead of largest real part using \texttt{eigs(G,10,'SM')} in Matlab.

\begin{table}[htbp]
\footnotesize
\begin{tabular}{c|l||c|l||c|l||c|l}
$\mu_1$ & $-0.02523$ & $\mu_6 $ & $-0.29908$ &$\sigma_1 $ & $0.79690$ & $\sigma_6 $ & $0.06776$\\
\hline
$\mu_2$ & $-0.21086$ & $\mu_7 $ & $-0.03534 -0.35003i$ & $\sigma_2 $ & $0.14990$ & $\sigma_7 $ & $-0.72750 +0.00634i$\\
\hline
$\mu_3$ & $-0.25710$ & $\mu_8 $ &$-0.03534 +0.35003i$ &$\sigma_3 $ & $0.09887$ & $\sigma_8 $ &  $-0.72750 -0.00634i$\\
\hline
$\mu_4$ & $-0.25836$ & $\mu_9 $ & $-0.39208$ &$\sigma_4 $ & $0.09776$ & $\sigma_9 $ & $0.02934$\\
\hline
$\mu_5$ & $-0.29905$ & $\mu_{10} $ & $-0.21995 - 0.33451i$ & $\sigma_5 $ & $0.06778$ & $\sigma_{10} $ & $-0.13695 +0.01805i$
\end{tabular}
\caption{Eigenvalues ($\mu_k$) of $\bm{\hat{G}}$ ordered in ascending magnitude and corresponding approximate singular values ($\sigma_k$) of $\mathcal{P}_{0,\tau}$ according to~\eqref{eq:eig_im}. The eigenvalues $\mu_7, \mu_8, \mu_{10}$ correspond to companion modes. They do not yield purely real singular values $\sigma_7,\sigma_8, \sigma_{10}$ through the exponentiation~\eqref{eq:eig_im}, because the numerically computed companions~\eqref{eq:companion} contain a bias induced by discretization; see \cite[Section 7.3]{FK17} for further details.}
\label{tab:bickley}
\end{table}

Table~\ref{tab:bickley} shows a gap after the first and sixth eigenvalue. Let us first discuss the leading $6$ eigenvectors. Figure~\ref{fig:bickley-eig} shows the eigenvectors corresponding to the dominant (i.e., smallest real part) $6$ eigenvalues. The first eigenvector, the quasistationary (or conditionally invariant) distribution highlights (in blue, see Figure~\ref{fig:bickley-eig}(a)) parts of the domain that get pushed out of the region $X$ of consideration.
The red regions are those parts of phase space that remain longest in $X$ under the diffusive dynamics~\eqref{dynsystem}.
This effect is due to the outflow conditions. Note that this example is in this sense explorative, as our theory in the previous sections was only considering reflecting and not outflow boundary conditions.

The second eigenvector indicates an upper/lower separation. The other four eigenvectors show combinations of coherent vortices.

\begin{figure}[htbp]
\centering
\begin{subfigure}{0.49\textwidth}
\includegraphics[width=\textwidth]{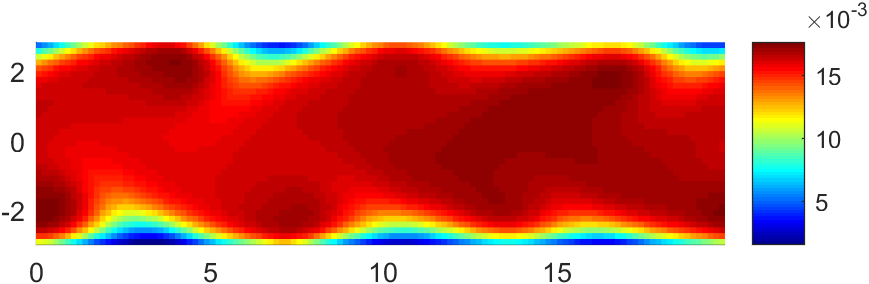}
\subcaption{First eigenvector (initial time).}
\end{subfigure}
\begin{subfigure}{0.5\textwidth}
\includegraphics[width=\textwidth]{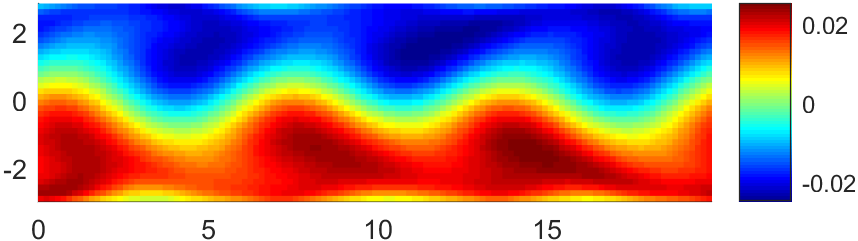}
\subcaption{Second eigenvector (initial time).}
\end{subfigure} \\

\begin{subfigure}{0.49\textwidth}
\includegraphics[width=\textwidth]{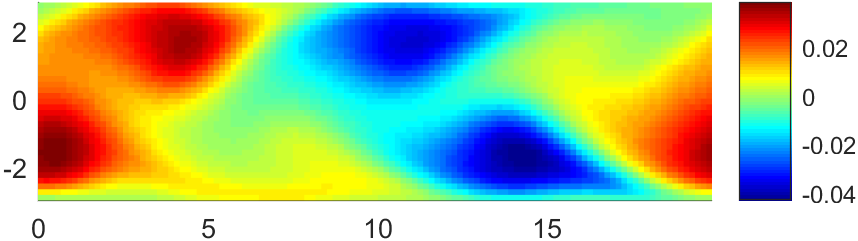}
\subcaption{Third eigenvector (initial time).}
\end{subfigure}
\begin{subfigure}{0.49\textwidth}
\includegraphics[width=\textwidth]{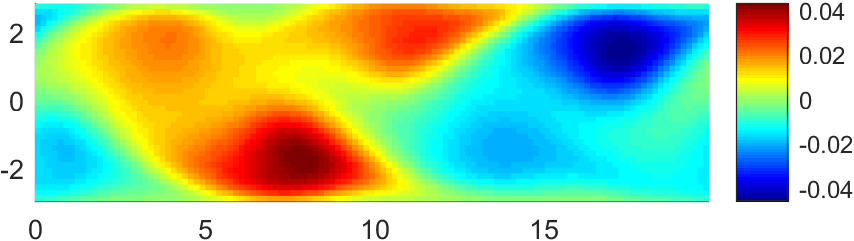}
\subcaption{Fourth eigenvector (initial time).}
\end{subfigure}\\

\begin{subfigure}{0.49\textwidth}
\includegraphics[width=\textwidth]{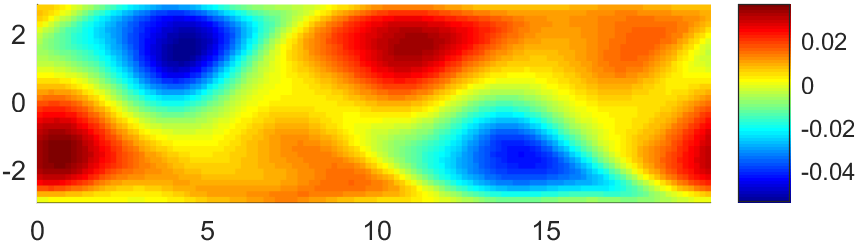}
\subcaption{Fifth eigenvector (initial time).}
\end{subfigure}
\begin{subfigure}{0.49\textwidth}
\includegraphics[width=\textwidth]{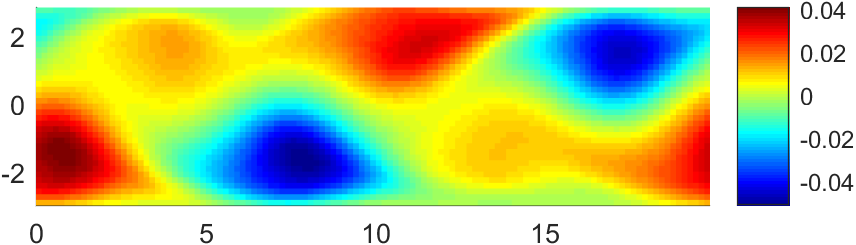}
\subcaption{Sixth eigenvector (initial time).}
\end{subfigure}

\caption{Approximate leading eigenvectors of $\bm{\hat{G}}$---that are according to \eqref{eq:eig_func-sing} singular vectors of $\mathcal{P}_{0,\tau}$---computed from the Ulam discretization of the reflected augmented generator with a $108 \times (120 \times 36)$ time-space resolution.}
\label{fig:bickley-eig}
\end{figure}

To investigate which elements of the spectrum of $\bm{\hat{G}}$ contain genuinely new dynamical information, we checked that the complex eigenvalues of~$\bm{\hat{G}}$ from $\mu_7$ to $\mu_{50}$ are all companion eigenvalues equal to (recall from section~\ref{ssec:num_disc} that $h$ is the temporal grid spacing)
\begin{equation} \label{eq:companion}
	\mu - \mu^{(k)} \quad \text{with} \quad \mu^{(k)} = \dfrac{1-\omega^k}{h}, \quad \omega = \exp \left( 2\pi i \frac{h}{2\tau}\right)
\end{equation}
for an eigenvalue $\mu$ of $\bm{\hat{G}}$ and a $k\in\mathbb{Z}$, as derived in~\cite[Section 7.3]{FK17}. Under the assumption that the eigenvector $\bm{w}$ is sufficiently smooth in time and time is sufficiently resolved (i.e., $w_t \approx w_{t-h}$), each eigenpair $(\mu,\bm{w})$ of the discretized generator $\bm{\hat{G}}$ has an approximate companion pair $( \mu-\mu^{(k)}, \bm{w} \psi_k)$, where $\bm{w}\psi_k$ is understood as pointwise multiplication and $\psi_k(t)=\omega^{kt}$, which only varies in time but not in space.
For additional verification we can check the correlation of the companion eigenvectors
\begin{equation}\label{eq:correl}
	c_n^{m}(k) := \dfrac{\langle  \psi_{k} \bm{w}_m, \bm{w}_n \rangle}{\| \psi_{k} \bm{w}_m \|_2 \| \bm{w}_n \|_2}
\end{equation}
as in \cite[Section 7.5]{FK17}. For instance, by looking at $\mu_1$ in Table~\ref{tab:bickley} and noting that $\mu^{(\pm 1)} = 0.01015\pm 0.34887i $ ($h=18/108$), we find that $\mu_7, \mu_8 \approx \mu_1 - \mu^{(\pm 1)}$ are candidates for companion eigenvalues for~$\mu_1$. The small difference in the numerical values of $\mu_1 - \mu_{7,8}$ and the shift $\mu^{(\pm 1)}$, around $2.3\cdot 10^{-3}$ in magnitude, is due to the first eigenvector not being constant in time, i.e., merely~$w_t \approx w_{t+h}$. Nonetheless, the complex eigenvalues $\mu_7$ and $\mu_8$ are companion eigenvalues to~$\mu_1$. This is supported by the correlation for the corresponding eigenvectors
\begin{equation*}
c_{7,8}^{1}(\pm 1) = \dfrac{ \langle \psi_{\pm 1}\bm{w}_1, \bm{w}_{7,8} \rangle_{\mathbb{R}^{120 \cdot 36 \cdot 108}}}{\| \psi_{\pm 1} \bm{w}_1 \|_2 \| \bm{w}_{7,8} \|_2} = 0.84597 \pm 0.53312i,\quad\text{i.e. } \big|c_{7,8}^{1}(\pm 1)\big| = 0.9999,
\end{equation*}
while the correlation with other eigenfunctions $n\in \{2,\ldots, 10\} \backslash \{ 7,8 \}$ satisfies $|c_{n}^1(\pm 1)| \leq 0.00323$. The construction above \eqref{eq:eig-sing} implies that every singular value is real. Our numerical calculations strongly suggest that within the first $50$ eigenvalues every complex eigenvalue is a companion to a real eigenvalue with smaller magnitude. The correlations using \eqref{eq:correl} yield results similar to those stated in the special case above.

\subsection{Vortex isolation by sparse eigenbasis approximation}
\label{ssec:SEBA}

The space-time signatures of six coherent vortices in the Bickley flow are captured in the leading six singular vectors shown in Figure \ref{fig:bickley-eig} (note only the initial time slice is displayed).
In order to isolate these six vortices in space-time, we apply an orthogonal rotation and some sparsification to the six-dimensional subspace of $\mathbb{R}^{108\times (120\times 36)}$ spanned by the leading six (space-time) eigenvectors shown in Figure \ref{fig:bickley-eig}.
The orthogonal rotation is chosen so as to construct an approximating basis of six sparse vectors.
To find such a sparse approximating basis, we applied the SEBA (Sparse EigenBasis Approximation) algorithm (see \cite[Algorithm 3.1]{seba}).
The six sparse basis vectors $\bm{\varphi}_k$, $k=1,\ldots,6$, produced by this algorithm are shown in Figure \ref{fig:seba}, and each of these vectors strongly isolates a single vortex.
We emphasise that we input the full space-time vectors to the SEBA algorithm, but in Figure \ref{fig:seba} display only the initial time slice.

\begin{figure}[tbp]
\centering

\begin{subfigure}{0.49\textwidth}
\includegraphics[width=\textwidth]{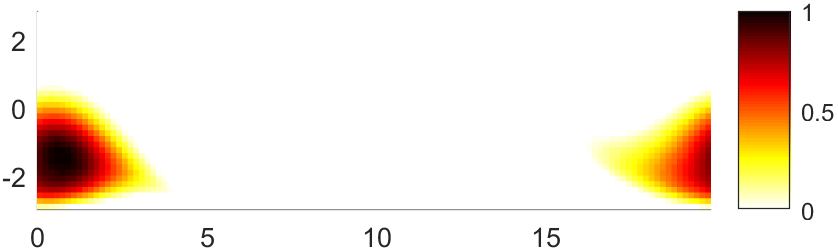}
\subcaption{First vector of SEBA output}
\end{subfigure}
\hfill
\begin{subfigure}{0.49\textwidth}
\includegraphics[width=\textwidth]{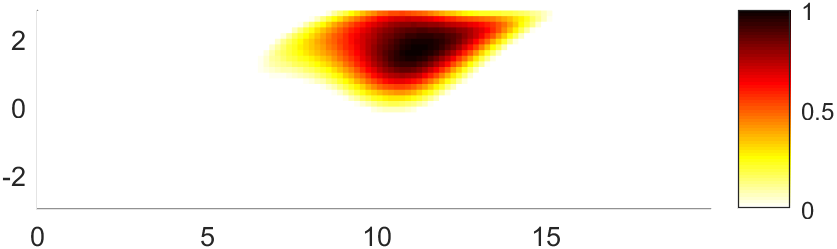}
\subcaption{Second vector of SEBA output}
\end{subfigure} \\
\begin{subfigure}{0.49\textwidth}
\includegraphics[width=\textwidth]{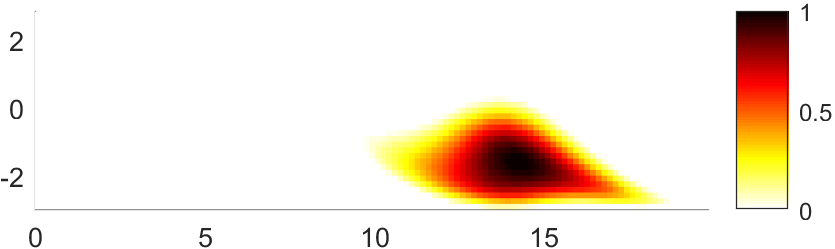}
\subcaption{Third vector of SEBA output}
\end{subfigure}
\hfill
\begin{subfigure}{0.49\textwidth}
\includegraphics[width=\textwidth]{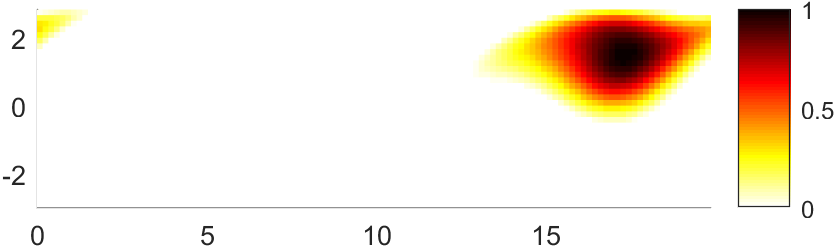}
\subcaption{Fourth vector of SEBA output}
\end{subfigure}\\
\begin{subfigure}{0.49\textwidth}
\includegraphics[width=\textwidth]{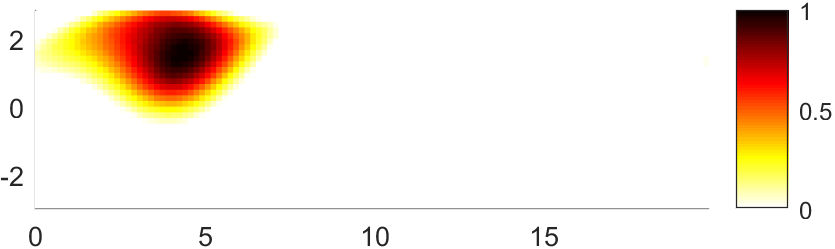}
\subcaption{Fifth vector of SEBA output}
\end{subfigure}
\hfill
\begin{subfigure}{0.49\textwidth}
\includegraphics[width=\textwidth]{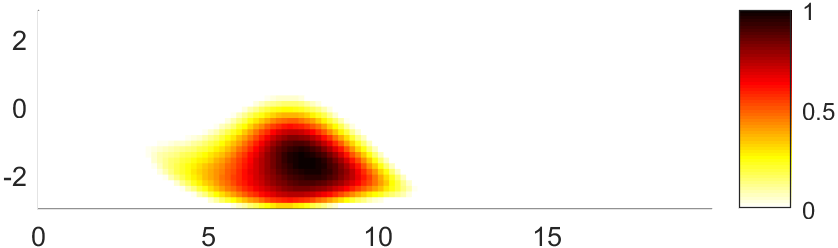}
\subcaption{Sixth vector of SEBA output}
\end{subfigure}
\caption{Space-time estimates of coherent sets extracted from the leading six eigenvectors using SEBA \cite{seba} (initial time slices shown only).}
\label{fig:seba}
\end{figure}

In Figure~\ref{fig:BickleyCohSimu} we seed particles inside the calculated vortical features (in the super-level set $\{\bm{\varphi}_k(0,\cdot) > 0.4\}$ if $\bm{\varphi}_k$ is scaled to have maximum-norm 1) and evolve them forward in time to visualize the coherence. In addition to the deterministic evolution we also visualize a stochastic evolution using $\varepsilon = 0.1$ as in our calculations above. Both simulations use a fourth-order Runge--Kutta (-Maruyama) scheme with step size $\frac{9}{4\cdot108} = \frac{1}{48}$. Figures~\ref{fig:BickleyCohSimu}(b) and (c) demonstrate the coherence of the single vortex in Figure~\ref{fig:BickleyCohSimu}(a).

\begin{figure}[tbp]
\centering

\begin{subfigure}{0.49\textwidth}
\centering
\includegraphics[width = \textwidth]{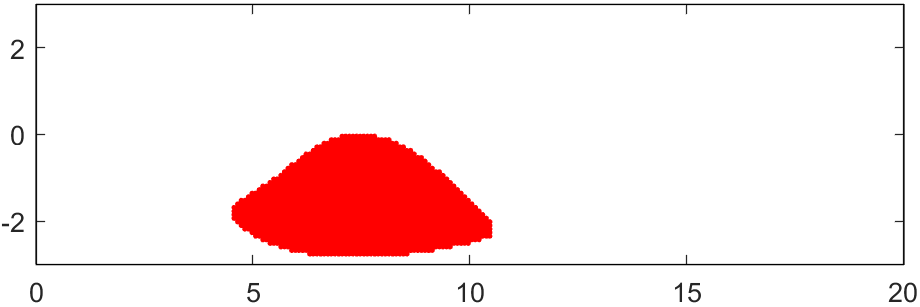}
\subcaption{Initial particles seeded in the gyre induced by the sixth SEBA-vector.}
\end{subfigure}
\begin{subfigure}{0.49\textwidth}
\centering
\includegraphics[width = \textwidth]{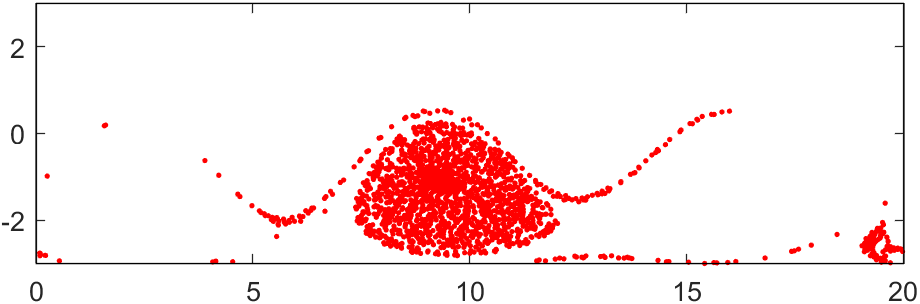}
\subcaption{Final time configuration of the seeded particles evolved by the Bickley jet flow.}
\end{subfigure}
\begin{subfigure}{0.49\textwidth}
\centering
\includegraphics[width = \textwidth]{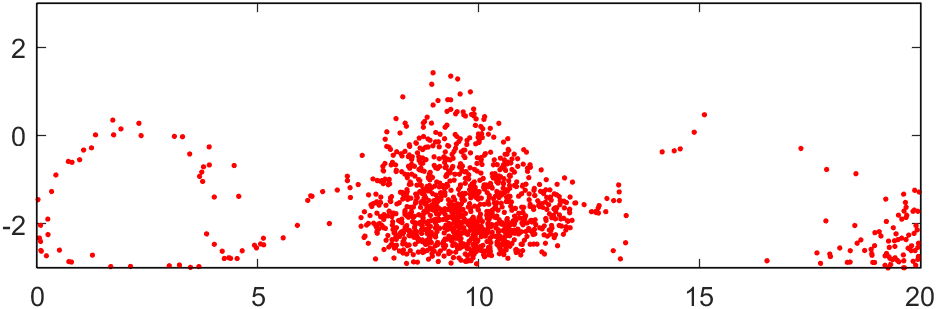}
\subcaption{Final time configuration of the seeded particles evolved by the Bickley jet flow with noise $\varepsilon = 0.1$.}
\end{subfigure}
\caption{Illustration of a coherent set provided by SEBA applied to the leading numerical eigenvectors.}
\label{fig:BickleyCohSimu}
\end{figure}

We wish to apply Proposition \ref{thm:cohratio2} to further demonstrate that the positive parts of the $\bm{\varphi}_k(0,\cdot)$ represent coherent sets.
Because the $\bm{\varphi}_k(0,\cdot)$, $k=1,\ldots,6$ do not \emph{exactly} span the leading six-dimensional eigenspace of $\bm{\hat{G}}$, Proposition \ref{thm:cohratio2} does not directly apply.
Nevertheless, using the linear combinations of eigenfunctions $\bm{f}_i$ that result in the SEBA-features $\bm{\varphi}_k(0,\cdot)$ we found that the hypotheses of Proposition~\ref{thm:cohratio2} were satisfied with the following modification: wherever the contributions \eqref{eq:contribs} were negative, the corresponding $\alpha_i$ in~\eqref{eq:contribs} were set to zero.
Any $\alpha_i$ that needed to be treated in this way was very close to zero.
We do not depict these slightly modified linear combinations as
they are still very close to the SEBA-features in Figure~\ref{fig:seba}.

%

\section{Optimization}
\label{sec:opt}

Having developed an efficient means of computing singular vectors of $\mathcal{P}_{0,\tau}$ as eigenfunctions of the augmented generator $\bm{\hat{G}}$, we turn our attention to manipulating these eigenfunctions. These manipulations will be used to control the mixing properties of aperiodic flows.
Theorem~\ref{thm:cohratio} provides a construction of a family of coherent sets $\{A_t^\pm\}_{t\in[0,\tau]}$ from eigenfunctions of $\bm{\hat{G}}$, with a coherence guarantee controlled by the corresponding eigenvalues.
Our goal now is to either enhance or diminish the coherence of a family of sets related to an eigenvalue $\mu_k$ by small time-dependent perturbations $u(t,x)$ of the velocity field $v(t,x)$.
This will be achieved by optimally manipulating $v(t,x)$ to increase or decrease the real part of the second eigenvalue $\mu_2$ of $\bm{\hat{G}}$.

For $m\ge 2$ let $H^m((0,\tau)\times X, \mathbb{R}^d)$ denote the Sobolev space of vector fields on $(0,\tau)\times X$ whose weak derivatives of order up to $m$ are square integrable\footnote{In order to use results from \cite{KLP2018} we will later make an additional assumption on $m$.}.
As we have previously assumed that $\mathrm{div}_x v = 0$ to simplify our presentation, for consistency we consider the subspace $\mathcal{D}_0$ of $H^m((0,\tau)\times X, \mathbb{R}^d)$ consisting of spatially divergence-free vector fields that satisfy homogeneous Neumann boundary conditions: $\frac{\partial u}{\partial n} = 0$ on $\partial X$.
We consider small perturbations $u$ lying in a bounded, closed and strictly convex subset $\mathcal{C}\subset \mathcal{D}_0$.

We adopt the approach of \cite{Froyland2016}, who select a perturbation $u$ so as to maximise the derivative of the real part of $\mu_2$ (or a group of leading eigenvalues) with respect to the perturbation.
The perturbation was made to the Ulam-discretized generator of the vector field, and optimized using linear programming;  the perturbed velocity field could then be inferred from the optimized generator.
In \cite{Froyland2016} the velocity field was assumed to be time-periodic, however, the same optimization approach could be applied to aperiodic velocity fields using the time-reflected velocity field introduced in this work;  one would need only to additionally impose the relevant time-reflection constraints in the optimization problem for the Ulam-discretized generator.
In the present work, we consider aperiodic velocity fields and in contrast to \cite{Froyland2016} we perturb the velocity field directly and solve the resulting optimization problem by Lagrange multipliers.
This potentially allows for greater flexibility in the discretization scheme.

To theoretically justify our approach in the infinite-dimensional setting we need some results from \cite{KLP2018} regarding the regularity of the spectrum of $\mathcal{P}_{0,\tau}$ with respect to perturbations of the velocity field.
Transferring these results to the regularity of the spectrum of $\bm{\hat{G}}$ with respect to velocity field perturbations, we derive a first variation of $\mu_k$ with respect to $u$ and detail the steps below in section~\ref{sec:obj}.
In section~\ref{sec:constraints} we specify the constraints and we then proceed with discussing necessary and sufficient (section~\ref{sec:conditions}) conditions for the optimization. Finally we summarize the result of the construction of our optimization.


\subsection{Objective functional and its smoothness}
\label{sec:obj}
We choose an eigenvalue $\mu_k$ of $\bm{\hat{G}}$ corresponding to a feature we want to enhance or diminish.
We want to alter the real part of $\mu_k$ with a perturbation $u$ of the original velocity field $v$ as much as possible within our constraints.
This is because by Theorem \ref{thm:cohratio} the real part of $\mu_k$ is a measure for the coherence of the family of features highlighted by the corresponding eigenvector.
Our chosen objective functional should be a good measure of the change of $\mu_k$ with respect to~$u$.
The response of an eigenvalue or a singular value with respect to a perturbation in this infinite-dimensional setting is in general complicated.
Therefore we approximate it locally via linearization;  that is by computing a first variation or first-order Taylor expansion.
In what follows, we assume that $\mu_k$ is real;  the obvious modifications can be made if $\mu_k$ is complex by considering the real parts.

Our domain $X$, the drift $v$, the perturbation $u$, for $m \geq 1+ \frac{d}{2} $ such that the space $H^m((0,\tau)\times X;\mathbb{R}^d)$ is conitnuously imbedded in $C^{(1,1)}([0,\tau]\times \overline{X};\mathbb{R}^d)$ \cite[Thm. 4.12]{AF2003}, and the noise $\varepsilon I_{d\times d}$ are smooth enough to apply the results of \cite{KLP2018} to $\mathcal{P}_{0,\tau}$.
Assuming that the singular value $\sigma_k$ is simple and isolated, \cite[Theorem 5.1]{KLP2018} and the paragraph following it guarantee Fr\'echet differentiability of $\sigma_k$ and the corresponding singular function with respect to~$u$.
Using  \eqref{eq:eig-sing} and the expression $\hat{\mathcal{P}}_{0,2\tau} = \mathcal{P}^{\ast}_{0,\tau} \mathcal{P}_{0,\tau}$ we can relate the singular values and functions of $\mathcal{P}_{0,\tau}$ and the eigenvalues and eigenfunctions of $\smash{ \hat{\mathcal{P}}_{0,2\tau} }$.
In particular, the eigenvalues of $\smash{ \hat{\mathcal{P}}_{0,2\tau} }$ are Fr\'echet differentiable with respect to $\hat{u}$.
The spectral mapping property\footnote{See~\cite[Chapters 2,3 and 6]{Chicone1999} for analogous results in the context of evolution semigroups, \cite[Lemma~22]{FK17} for periodically forced systems, and~\cite{Paz83,EnNa00} for spectral mapping results for one-parameter semigroups.}
of Proposition~\ref{augmen} asserts for corresponding eigenvalues $\mu(\bm{\hat{G}})$ of $\bm{\hat{G}}$, eigenvalues $\lambda(\hat{\mathcal{P}}_{0,2\tau})$ of $\hat{\mathcal{P}}_{0,2\tau}$ and singular values $\sigma (\mathcal{P}_{0,\tau})$ of $\mathcal{P}_{0,\tau}$ that
\begin{equation*}
	\exp (2\tau \mu (\bm{\hat{G}})) = \lambda \big(\hat{\mathcal{P}}_{0,2\tau}\big) = (\sigma (\mathcal{P}_{0,\tau}))^2,
\end{equation*}
which extends the differentiability to the spectrum of~$\bm{\hat{G}}$ and, in particular, $\mu_k$, hence $	\mu_k = \tfrac{1}{2\tau} \ln \lambda_k = \tfrac{1}{\tau} \ln \sigma_k$.

Having establishing the Fr\'echet differentiability of $\mu_k$ with respect to $u$, we now calculate the first variation of $\mu_k$ with respect to $\hat{u}$;  in other words, we compute the G{\^a}teaux derivative of $\mu_k$ at $\hat{v}$ in the direction induced by $u$, which exists and coincides with the Fr\'echet derivative of $\mu_k$ at $\hat{v}$ applied to the direction induced by ${u}$.
Consider $u$ as above and some small $\delta >0$.
We insert the reflected perturbed velocity field $\hat{v}+\delta \hat{u}$ into \eqref{eq:aug_gen}:
\begin{equation}\label{eq:pert-gen}
(\bm{\hat{G}} + \delta \hat{\bm{E}}) ( \hat{\bm{f}}) = - \text{div}_{(\theta ,x)} \left( \begin{pmatrix} 1 \\ \hat{v} \end{pmatrix} \bm{f} \right) + \dfrac{1}{2} \Delta_{(\theta,x)} \bm{\varepsilon}^2\hat{\bm{f}} \underbrace{- \delta \text{div}_{(\theta,x)} \left( \begin{pmatrix} 0 \\ \hat{u} \end{pmatrix} \hat{\bm{f}} \right)}_{= \delta \bm{\hat{E}}\bm{\hat{f}}},
\end{equation}
where
$\bm{\hat{v}}=(1,\hat{v})$, $\bm{\hat{u}} = ( 0, \hat{u})$, $\hat{E}(t) (\hat{f}) = - \hat{f} \text{div}_x (\hat{u}) - \hat{u} \nabla_x \hat{f}=- \hat{u} \nabla_x \hat{f}$, using $\text{div}_x (\hat{u})=0$ and the perturbation generator $\hat{\bm{E}} = -\text{div}_{(\theta,x)} \bm{\hat{u}} - \bm{\hat{u}} \nabla_{(\theta,x)}$.
Let $\hat{\bm{g}}^k(\delta)$ and $\hat{\bm{f}}^k(\delta)$ be the left and right eigenfunctions, respectively, of $\bm{\hat{G}}+\delta \bm{\hat{E}}$ for the eigenvalue $\mu_k(\delta)$; that is,
\begin{align*}
(\bm{\hat{G}}+\delta \bm{\hat{E}})\phantom{^{\ast}} \hat{\bm{f}}^k(\delta) &= \mu_k(\delta) \hat{\bm{f}}^k(\delta),\\
(\bm{\hat{G}}+\delta \bm{\hat{E}})^{\ast} \hat{\bm{g}}^k(\delta) &= \mu_k(\delta) \hat{\bm{g}}^k(\delta),
\end{align*}
normalising so that $\langle \hat{\bm{f}}^k(\delta) , \hat{\bm{f}}^k(\delta) \rangle_{L^2} = \langle \hat{\bm{g}}^k(\delta), \hat{\bm{f}}^k(\delta) \rangle_{L^2} = 1$.
For $\delta=0$, we use the shorthand $\hat{\bm{f}}^k=\hat{\bm{f}}^k(0)$ and $\hat{\bm{g}}^k=\hat{\bm{g}}^k(0)$.
To estimate the effect of the perturbation $u$ on $\mu_k$ we linearise $\mu_k(\delta)$ at $\delta = 0$.
We have
$$\frac{d}{d\delta} \mu_k (\delta)|_{\delta = 0} = \frac{d}{d\delta} \langle \hat{\bm{g}}^k(\delta) , (\bm{\hat{G}}+\delta\hat{\bm{E}}) \hat{\bm{f}}^k(\delta) \rangle_{L^2}|_{\delta = 0} = \langle \hat{\bm{g}}^k , \hat{\bm{E}} \hat{\bm{f}}^k \rangle_{L^2},$$
using the eigenproperties of $\hat{\bm{f}}^k$ and $\hat{\bm{g}}^k$, and the normalisations above; see also~\cite[Section 4.3]{Froyland2016}.
Now,
\begin{equation}
\label{c1}
\langle \hat{\bm{g}}^k , \hat{\bm{E}} \hat{\bm{f}}^k \rangle_{L^2}
= \int_{[0,2\tau]\times X} \hat{\bm{g}}^k(\bm{x})(-\text{div}_{(\theta,x)} \left( \begin{pmatrix}0 \\ \hat{u} \end{pmatrix} \hat{\bm{f}}^k \right) (\bm{x}) ) d\bm{x}=:c({u}),
\end{equation}
where $c:\mathcal{C}\to\mathbb{R}$ is a linear function of $u$.
If $\mu_k$ is complex, then one considers the real part of the functional $c$.
\begin{lemma}\label{lem:obj_prop}
The objective functional $c \, : \, H^m((0,\tau)\times X) \rightarrow \R$, with $m\geq 1$, is continuous, Fr\'echet differentiable and the Fr\'echet derivative is Lipschitz continous.
\end{lemma}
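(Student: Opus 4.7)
The starting observation is that $c$ is a \emph{linear} functional of $u$: the reflection $u\mapsto\hat u$ induced by~\eqref{hatvdef} is linear, the integrand in~\eqref{c1} depends linearly on $\hat u$, and the eigenfunctions $\hat{\bm f}^k,\hat{\bm g}^k$ are those of the \emph{unperturbed} generator $\bm{\hat G}$, hence fixed. (If $\mu_k$ is complex, taking the real part preserves linearity.) For a bounded linear functional on a Banach space, continuity is equivalent to boundedness, Fr\'echet differentiability is automatic with derivative equal to $c$ itself, and this constant derivative is trivially Lipschitz with constant~$0$. Therefore the entire lemma follows once we show
\[
|c(u)| \le C\,\|u\|_{H^m} \qquad \text{for all } u\in H^m((0,\tau)\times X;\R^d),
\]
with $C$ independent of $u$.

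To obtain this bound, I would first use $\mathrm{div}_x\hat u=0$ (inherited from $\mathrm{div}_x u=0$) to collapse the divergence in~\eqref{c1}, giving
\[
c(u) = -\int_{[0,2\tau]\times X} \hat{\bm g}^k(\theta,x)\,\bigl(\hat u(\theta,x)\cdot\nabla_x\hat{\bm f}^k(\theta,x)\bigr)\,d\theta\,dx,
\]
and then apply a three-fold H\"older inequality with exponents $(\infty,2,2)$:
\[
|c(u)| \le \|\hat{\bm g}^k\|_{L^\infty(\bm{\hat X})}\,\|\hat u\|_{L^2(\bm{\hat X})}\,\|\nabla_x\hat{\bm f}^k\|_{L^2(\bm{\hat X})}.
\]
The factor $\|\hat{\bm g}^k\|_{L^\infty}$ is finite by Corollary~\ref{cor:conti}, which delivers continuity of the eigenfunction on the compact augmented cylinder $\bm{\hat X}$. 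The factor $\|\nabla_x\hat{\bm f}^k\|_{L^2}$ is controlled because $\hat{\bm f}^k\in\mathcal D(\bm{\hat G})$ together with the elliptic/parabolic regularity ingredients behind Theorem~\ref{thm:regularity} places $\hat{\bm f}^k(\theta,\cdot)$ in $H^2(X)$ for almost every $\theta$. Finally, $\|\hat u\|_{L^2(\bm{\hat X})}=\sqrt 2\,\|u\|_{L^2((0,\tau)\times X)}\le\sqrt 2\,\|u\|_{H^m}$ by the definition of the reflection and the trivial embedding $H^m\hookrightarrow L^2$ on a bounded domain, valid for any $m\ge 0$ and hence for the assumed $m\ge 1$.

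The step requiring the most care is the $L^2$-control of $\nabla_x\hat{\bm f}^k$ across the temporal gluing points $t=0,\tau,2\tau$ where the reflected velocity field $\hat v$ is only piecewise smooth. I would handle this exactly as in the proof of Proposition~\ref{augmen}: invoke the non-autonomous well-posedness and regularity result Theorem~\ref{thm:NACP} separately on $[0,\tau]$ and $[\tau,2\tau]$, and combine the two pieces via the $L^2$-continuous concatenation at $t=\tau$. Once this one-sided regularity is secured, the product bound above is immediate, yielding $|c(u)|\le C\|u\|_{H^m}$ with $C$ depending only on $v$, $\varepsilon$ and the index~$k$ through the fixed eigenfunctions, which completes the proof.
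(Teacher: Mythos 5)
Your proof is correct and follows essentially the same strategy as the paper's: observe that $c$ is linear in $u$, so once a continuity bound $|c(u)|\le C\|u\|_{H^m}$ is established, Fr\'echet differentiability and Lipschitz continuity of the derivative come for free. Where you differ slightly is in the estimate itself: the paper applies the product rule to the divergence in~\eqref{c1} and keeps both resulting terms, bounding them by $\|\hat{\bm g}^k\|_{L^\infty}\bigl(\|\hat{\bm f}^k\|_{L^\infty}\|\mathrm{div}_x\hat u\|_{L^1}+\|\nabla_{\bm x}\hat{\bm f}^k\|_{L^\infty}\|\hat u\|_{L^1}\bigr)$ and hence against $\|u\|_{H^1}$; you instead invoke $\mathrm{div}_x\hat u=0$ (which the paper itself uses earlier, in the line just above the lemma, to simplify the perturbation generator to $\hat E(t)\hat f=-\hat u\cdot\nabla_x\hat f$, but then does not exploit in the proof) to kill the divergence term outright, and then run Cauchy--Schwarz with the $L^2$ norm of $\nabla_x\hat{\bm f}^k$ rather than its $L^\infty$ norm. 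Your version is marginally cleaner and needs only $\|u\|_{L^2}$ on the right-hand side rather than $\|u\|_{H^1}$, and it also demands less regularity of the right eigenfunction ($\nabla_x\hat{\bm f}^k\in L^2$ instead of $L^\infty$), which is more directly supported by the domain $\mathcal D_p\subset W^{2,p}(X)$ used in Appendix~\ref{app:NACP}. One small caveat that both proofs gloss over: $\|\hat{\bm g}^k\|_{L^\infty}<\infty$ is asserted by appeal to Corollary~\ref{cor:conti}, but that corollary is stated for eigenfunctions of $\bm{\hat G}$, whereas $\hat{\bm g}^k$ is a left eigenfunction (eigenfunction of the adjoint); the analogous regularity does hold by the same parabolic theory, but strictly speaking it is a separate check.
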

\begin{proof}
	Using (\ref{c1}), the following estimate 
shows that $c$ is continuous.
	\begin{eqnarray*}
		|c(u)| & \leq &\int_{[0,2\tau]\times X} | \hat{\bm{g}}^k \hat{\bm{f}}^k \text{div}_{\bm{x}} \begin{pmatrix} 0 \\ \hat{u} \end{pmatrix} | + | \hat{\bm{g}}^k \langle \begin{pmatrix} 0 \\ \hat{u} \end{pmatrix}, \nabla_{\bm{x}} \hat{\bm{f}}^k \rangle_{\R^{d+1}} | \, d\bm{x}\\
		& \leq & K_1 \| \hat{\bm{g}}^k \|_{\infty} \left( \| \hat{\bm{f}}^k \|_{\infty} \int_{[0,2\tau]\times X} | \text{div}_{\bm{x}} \begin{pmatrix} 0 \\ \hat{u} \end{pmatrix} | \, d\bm{x} +\int_{[0,2\tau] \times X} \| \nabla_{\bm{x}} \hat{\bm{f}}^k \|_{\infty} \| \hat{u} \|_2 \,d\bm{x} \right)\\
		& \leq & K_2 \| \hat{\bm{g}}^k \|_{\infty} \| \nabla_{\bm{x}} \hat{\bm{f}}^k \|_{\infty} \| u \|_{H^1}
	\end{eqnarray*}
	Here $\| \cdot \|_{\infty}$ denotes the canonical $L^\infty ((0,2\tau)\times X)$ norm. The Fr\'echet differentiability of $c$ is straightforward because $c$ is linear.
\end{proof}

We will prove further relevant properties of $c$ in section~\ref{sec:conditions}.

\subsection{Constraints}\label{sec:constraints}
As mentioned above we consider perturbations $u \in \mathcal{C}$, a bounded, closed and strictly convex subset of $\mathcal{D}_0 \subset H^m((0,\tau) \times X;\R^d)$.
For our objective functional $c$ to be a valid estimate of the change in $\mu_k$ due to the perturbation $u$, we restrict $u$ to be small using $R>0$.
We consider a ball or ellipsoid in the form of the following energy constraint. For multi-indices $\alpha$ and a weight vector $\omega = (\omega_{\alpha})_{|\alpha| \leq m}$ with $0<\omega_{\alpha} \in \mathbb{R}$ for all $|\alpha| \leq m$ we require
\begin{equation}\label{eq:B-omega}
\begin{aligned}
	0 \stackrel{!}{\ge} h(u)\ &{:=}\ B_{\omega}(u,u)-R^2 = \sum_{|\alpha| \leq m} \omega_{\alpha} \| D^{\alpha} u \|_{L^2}^2 -R^2\\
	&= \sum_{|\alpha| \leq m} \omega_{\alpha} \langle D^{\alpha} u , D^{\alpha} u \rangle_{L^2} - R^2,
	\end{aligned}
\end{equation}
where $D^{\alpha} = (\partial_1^{\alpha_1} \cdots \partial_d^{\alpha_d})$.
The functional $h$ is continuously Fr\'echet differentiable in $u$ from $H^m$ to $\R$ since $B_{\omega}$ is a bounded positive definite bilinear form.
We consider $\mathcal{C} = U \cap \{h\le 0\}$ where $U$ is a proper subspace of $H^m((0,\tau) \times X;\R^d)$ and might only have a relative interior.\footnote{For a rigorous definition of ``relative interior'' and related concepts see \cite[II.6]{Rockafellar1970}. For a (not entirely correct but) short intuition: The relative interior of a set $A$ can be thought of as the interior with respect to the trace topology of the smallest subspace that contains $A$.} $U$ describes the set of admissible perturbations to the original vector field~$v$.
By construction in \eqref{eq:B-omega} there are constants $\gamma>0$ ($B_{\omega}$ is bounded) and $\beta>0$ ($B_{\omega}$ is positive definite) such that
\begin{equation*}
	\beta \| u \|^2_{H^m} \leq B_{\omega}(u,u) \leq \gamma \| u \|^2_{H^m} \; .
\end{equation*}
Now, Theorem \ref{thm:bil_form} ensures the existence of a linear, bounded, injective and self-adjoint operator $J_B(\omega)$ with
\begin{equation*}
	\langle J_B(\omega) u,v \rangle_{H^m} = B_{\omega}(u,v) \quad \text{for all } u,v \in H^m \; .
\end{equation*}
We will use $J_B(\omega)$ to derive an explicit formula for the optimal solution.

\subsection{Optimality conditions}\label{sec:conditions}
We have an optimization problem with a continuous linear objective $c$ and a closed, bounded, strictly convex feasible set $\mathcal{C}$, defined by a single constraint $\{h\le 0\}$ on a subspace $U$.
Lagrange multipliers provide a convenient and explicit solution to this problem, e.g.\ \cite{Luenberger1985}, however, first we establish a general existence and uniqueness result.

\subsubsection{Unique optimum}
\begin{lemma}\label{lem:opt_sol}
Let $\mathcal{C}$ be a closed, bounded, and strictly convex subset of $H^m$ containing the zero element in its (relative) interior and $c \, : \, \mathcal{C} \rightarrow \mathbb{R}$ be a bounded linear functional that does not uniformly vanish on~$\mathcal{C}$.
Then the optimization problem $\min_{u\in \mathcal{C}} c(u)$ has a unique solution $u^{\ast} \in \mathcal{C}$.
\end{lemma}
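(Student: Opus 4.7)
The plan is to handle existence and uniqueness separately, exploiting that $H^m$ is a reflexive Hilbert space. For \textbf{existence}, I would argue that $\mathcal{C}$, being a closed, bounded, convex subset of the reflexive space $H^m$, is weakly sequentially compact (closed convex sets are weakly closed, bounded sets in reflexive spaces are relatively weakly compact by Banach--Alaoglu/Kakutani). Since $c$ is a bounded linear functional, it is weakly continuous, so the infimum $\inf_{u\in\mathcal{C}} c(u)$ is attained by a minimizing sequence that admits a weakly convergent subsequence whose limit lies in $\mathcal{C}$.

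For \textbf{uniqueness}, I would proceed by contradiction. Suppose $u_1, u_2 \in \mathcal{C}$ are two distinct minimizers with common optimal value $c^\ast$. Since $c$ is linear, every convex combination $u_t := t u_1 + (1-t) u_2$ satisfies $c(u_t) = c^\ast$ for all $t \in [0,1]$. By strict convexity of $\mathcal{C}$, the midpoint $u^\ast := \tfrac{1}{2}(u_1 + u_2)$ lies in the relative interior of $\mathcal{C}$ (i.e., in the interior relative to the affine subspace spanned by $\mathcal{C}$, which is a translate of $U$).

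Next, because $c$ does not uniformly vanish on $\mathcal{C}$ and $c$ is linear with $0$ in the relative interior of $\mathcal{C}$, the restriction $c|_U$ is not the zero functional on $U$. Hence there exists $w \in U$ with $c(w) > 0$. Since $u^\ast$ is a relative interior point, there is $\delta > 0$ such that $u^\ast - \varepsilon w \in \mathcal{C}$ for all $\varepsilon \in [0,\delta]$, and then
\begin{equation*}
c(u^\ast - \varepsilon w) = c^\ast - \varepsilon\, c(w) < c^\ast,
\end{equation*}
contradicting optimality of $c^\ast$. Hence $u_1 = u_2$, establishing uniqueness.

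The main subtle point I anticipate is the careful bookkeeping between $H^m$, the subspace $U$ of admissible perturbations, and the relative interior of $\mathcal{C}$: the descent direction $w$ must be chosen inside $U$ so that $u^\ast - \varepsilon w$ remains in the correct affine slice, and the relative interior property must supply an admissible stepsize. Once this is in place, strict convexity is used only to push the midpoint of two candidate minimizers into the relative interior, where the linearity of $c$ together with its non-triviality immediately forbids a second minimizer.
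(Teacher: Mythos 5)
Your proposal is correct and follows essentially the same route as the paper: existence via weak sequential compactness of the closed, bounded, convex set $\mathcal{C}$ in the reflexive space $H^m$ together with weak continuity of the linear functional $c$, and uniqueness by pushing the midpoint of two hypothetical minimizers into the relative interior via strict convexity and then constructing a feasible descent direction from the non-vanishing of $c$. Your treatment is if anything slightly more careful than the paper's on the step-size bookkeeping (choosing $\delta$ so that $u^\ast-\varepsilon w$ stays in $\mathcal{C}$), but the argument is the same.
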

\begin{proof}
	Continuity  of $c$ and boundedness of $\mathcal{C}$ imply
		$\inf_{u\in \mathcal{C}} c(u) = \alpha > - \infty$.
Let $u_k \in \mathcal{C}$ be such that $\lim_{k\rightarrow \infty} c(u_k) = \alpha$.
	This sequence is bounded, and so there is a weakly convergent subsequence $u_{n_k} \rightharpoonup u^{\ast}$. The set $\mathcal{C}$ is closed and convex and therefore also weakly closed, which implies $u^\ast \in \mathcal{C}$. By the definition of weak convergence $c(u^{\ast}) = \lim_{k\rightarrow \infty} c(u_{n_k}) = \alpha$ follows. Therefore $u^{\ast}$ is a solution for our optimization problem.

In order to demonstrate uniqueness assume we have two solutions $u_1\neq u_2$ with $c(u_1) = c(u_2) = \alpha$.
Strict convexity of $\mathcal{C}$ implies that $u_3:=u_1/2+u_2/2\in \text{int}(\mathcal{C})$, where the relative interior is meant.
Linearity of $c$ implies~$c(u_3)=\alpha$.
Let $r>0$ be such that an open ball of radius $r$ centred at $u_3$ is contained in~$\text{int}(\mathcal{C})$.

Because $c$ does not vanish on $\mathcal{C}$ and the zero vector is in the relative interior of $\mathcal{C}$, there exists a $v\in\mathcal{C}$ such that $c(v)<0$.
By linearity of $c$ we have $c(u_3+(r/2)v)<\alpha$, contradicting optimality of $u_3$ and establishing uniqueness of the optimum.
\end{proof}

\subsubsection{Necessary conditions}
The following property will be used for the necessary conditions in the theory of Lagrange multipliers in Lemma \ref{lem:nec}.
\begin{lemma}
	The unique optimal solution $u^{\ast}$ is a regular point for $h:U\to\mathbb{R}$.
\end{lemma}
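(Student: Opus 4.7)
The claim that $u^{\ast}$ is a regular point of $h$ amounts to showing that the Fr\'echet derivative $h'(u^{\ast})\colon U\to\mathbb{R}$ is surjective. Since $h(u)=B_{\omega}(u,u)-R^2$ and $B_{\omega}$ is symmetric, bilinear and bounded, its derivative is the continuous linear functional
\begin{equation*}
	h'(u^{\ast})(v)=2B_{\omega}(u^{\ast},v)=2\langle J_B(\omega)u^{\ast},v\rangle_{H^m},\qquad v\in U.
\end{equation*}
Because the codomain $\mathbb{R}$ is one-dimensional, surjectivity is equivalent to non-vanishing, so the plan reduces to producing some $v\in U$ with $B_{\omega}(u^{\ast},v)\neq 0$. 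The obvious candidate is $v=u^{\ast}$ itself, for which positive-definiteness of $B_{\omega}$ yields $B_{\omega}(u^{\ast},u^{\ast})>0$ as soon as $u^{\ast}\neq 0$.

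The core step is therefore to show that the constraint is \emph{active} at the optimum, i.e.\ $h(u^{\ast})=0$. Suppose for contradiction that $h(u^{\ast})<0$. Continuity of $h$ on $U$ (inherited from continuity of the bounded bilinear form $B_{\omega}$) gives a radius $\delta>0$ such that $h<0$ on the relative ball of radius $\delta$ around $u^{\ast}$ in $U$; consequently $u^{\ast}+tw\in \mathcal{C}$ for every $w\in U$ with $\|tw\|_{H^m}<\delta$. The standing assumption that $c$ does not uniformly vanish on $\mathcal{C}$ provides some $w\in \mathcal{C}\subset U$ with $c(w)\neq 0$, and linearity of $c$ lets us choose $t$ small with $t\,c(w)<0$, whereby
\begin{equation*}
	c(u^{\ast}+tw)=c(u^{\ast})+t\,c(w)<c(u^{\ast}),
\end{equation*}
contradicting the optimality of $u^{\ast}$ obtained in Lemma~\ref{lem:opt_sol}. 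Hence $h(u^{\ast})=0$, which is to say $B_{\omega}(u^{\ast},u^{\ast})=R^2>0$; in particular $u^{\ast}\neq 0$.

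With activeness in hand, the conclusion is immediate: $h'(u^{\ast})(u^{\ast})=2R^2\neq 0$, so $h'(u^{\ast})$ is a non-zero continuous linear functional on $U$, and its image is therefore all of $\mathbb{R}$. This is precisely the regularity hypothesis needed to invoke the Lagrange-multiplier apparatus in Lemma~\ref{lem:nec}. The only non-routine ingredient is the activeness argument in the previous paragraph, which hinges on linearity of $c$ together with the non-vanishing hypothesis --- the same two ingredients already exploited in the uniqueness part of Lemma~\ref{lem:opt_sol}.
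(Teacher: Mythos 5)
Your proof is correct and follows the same basic route as the paper's: compute $h'[u^*](v) = 2B_\omega(u^*, v)$ and observe that surjectivity onto $\mathbb{R}$ for a continuous linear functional is equivalent to it being non-zero, which holds as soon as $u^* \neq 0$ by positive-definiteness of $B_\omega$. Where you go further than the paper is in explicitly proving $u^* \neq 0$: you show the constraint is active at the optimum by a perturbation argument (if $h(u^*)<0$ then $u^*$ lies in the relative interior of $\mathcal{C}$, and the non-vanishing of $c$ together with linearity yields a feasible direction of descent, contradicting optimality), hence $B_\omega(u^*,u^*)=R^2>0$. The paper's proof of this lemma omits that step entirely --- it merely asserts that $h'[u]$ is ``obviously surjective onto $\mathbb{R}$ for all $u \neq 0$'' and relies on a remark placed \emph{after} the lemma (that the constraint may be replaced by equality) to implicitly justify $u^* \neq 0$. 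Your activeness argument is the natural way to close that gap, and it reuses the same two ingredients (linearity of $c$ and the non-vanishing hypothesis) that the paper already exploits in the uniqueness part of Lemma~\ref{lem:opt_sol}; so your version is a more self-contained rendering of essentially the same approach.
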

In order to distinguish between the point at which the derivative is taken and the input of the resulting mapping, we will use square brackets for the reference point of the derivative and round brackets for the input of the resulting mapping.\footnote{So $h'[u](v)$ denotes the derivative of $h$ at $u$, which again is a linear mapping, applied to $v$.}
\begin{proof}
Following the definition on \cite[p.~240]{Luenberger1985}, the point $u^\ast$ is a regular point for $U\to\mathbb{R}$ if the derivative of $h$ at $u^{\ast}$, denoted by $h'[u^{\ast}] : U \rightarrow \mathbb{R}$ is surjective.	
Since $h(u)=B_\omega (u,u)-R^2$, and the functional $h'[u]$ acts as $v \mapsto  h'[u](v) = \langle J_B(\omega) v , u \rangle_{H^m}+ \langle J_B(\omega)u, v  \rangle_{H^m} $, $h'[u]$ is obviously surjective onto $\R$ for all~$u \neq 0$.
\end{proof}

The uniqueness argument in the proof of Lemma~\ref{lem:opt_sol} shows that we may replace our constraint $h(u)\le 0$ with~$h(u)=0$.
We now use \cite[Theorem 1, p.~243]{Luenberger1985}, stated below.
\begin{lemma}\label{lem:nec}
	If the  continuously Fr\'echet differentiable functional $c$ has a local extremum under the constraint $h(u)=0$ at the regular point $u^{\ast}$, then there exists an element $z \in \mathbb{R}$ such that the Lagrangian functional $L(u) = c(u) + z h(u)$	is stationary at $u^{\ast}$;  that is, $c'[u^{\ast}] + z h'[u^{\ast}] = 0$.
\end{lemma}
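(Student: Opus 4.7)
The plan is to establish this classical first-order necessary condition by locally parametrizing the constraint set $\{h=0\}$ near $u^{\ast}$ via the implicit function theorem and then exploiting the vanishing of the derivative of $c$ along every admissible curve. Because the codomain of $h$ is one-dimensional, the argument reduces to a particularly simple instance of the general Lagrange multiplier theorem in Banach spaces.

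First I would set up the tangent geometry. Regularity of $u^{\ast}$ means the bounded linear functional $h'[u^{\ast}] : U \to \R$ is surjective, so its kernel $T := \ker h'[u^{\ast}]$ is a closed linear subspace of codimension one. Fixing any $w \in U$ with $h'[u^{\ast}](w) \neq 0$ yields the direct-sum decomposition $U = T \oplus \R w$, which will carry the whole argument.

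Next, for an arbitrary tangent direction $v \in T$, I would construct a $C^1$ curve $\gamma(t) = u^{\ast} + t v + \varphi(t) w$ lying entirely on $\{h=0\}$ with $\gamma(0) = u^{\ast}$ and $\gamma'(0) = v$. Define $F(t,\varphi) := h(u^{\ast} + tv + \varphi w)$; then $F(0,0) = 0$ and $\partial_\varphi F(0,0) = h'[u^{\ast}](w) \neq 0$, so the implicit function theorem for continuously Fr\'echet differentiable maps yields a unique $C^1$ function $\varphi$ near $0$ with $\varphi(0) = 0$ and $F(t,\varphi(t)) = 0$. Differentiating the latter at $t=0$ and using $h'[u^{\ast}](v) = 0$ gives $\varphi'(0) = 0$, whence $\gamma'(0) = v$.

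Finally, local extremality of $c$ at $u^{\ast}$ combined with the chain rule gives $\tfrac{d}{dt} c(\gamma(t))\big|_{t=0} = c'[u^{\ast}](v) = 0$ for every $v \in T$. Thus $\ker h'[u^{\ast}] \subseteq \ker c'[u^{\ast}]$. Setting $z := -c'[u^{\ast}](w) / h'[u^{\ast}](w)$ and decomposing an arbitrary $u \in U$ as $u = v + \alpha w$ with $v \in T$ and $\alpha \in \R$ verifies $c'[u^{\ast}](u) = \alpha c'[u^{\ast}](w) = -z h'[u^{\ast}](u)$, so $c'[u^{\ast}] + z h'[u^{\ast}] = 0$ as claimed. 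The main obstacle is the implicit function step, but since the constraint takes values in $\R$ it collapses to an elementary scalar equation whose invertibility is guaranteed by the choice of $w$, so the argument proceeds cleanly without any functional-analytic subtleties beyond those already used elsewhere in the paper.
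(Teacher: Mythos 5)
Your proof is correct and complete; the paper itself does not prove this lemma but simply cites it as \cite[Theorem~1, p.~243]{Luenberger1985}, so you have supplied a self-contained argument where the authors rely on a textbook reference. Your route is the standard one for the Lagrange multiplier theorem in Banach spaces: use surjectivity of $h'[u^{\ast}]$ (regularity) to pick $w$ with $h'[u^{\ast}](w)\neq 0$ and split $U = \ker h'[u^{\ast}] \oplus \R w$, apply the implicit function theorem to produce a $C^1$ curve in $\{h=0\}$ through $u^{\ast}$ with prescribed tangent $v \in \ker h'[u^{\ast}]$, conclude from local extremality that $\ker h'[u^{\ast}] \subseteq \ker c'[u^{\ast}]$, and then solve the one-dimensional linear-algebra problem for $z$. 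This is essentially the proof in Luenberger's book as well, so the two approaches coincide in substance; the only thing worth flagging is that your implicit-function step implicitly uses that $h$ is continuously Fr\'echet differentiable, which the paper has already established in Section~\ref{sec:constraints} (there $h(u)=B_\omega(u,u)-R^2$ with $B_\omega$ a bounded bilinear form), so the hypothesis is available even though the lemma's statement names only $c$ as $C^1$.
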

We thus obtain the two necessary conditions:
\begin{eqnarray}
\label{nec1} c'[u^{\ast}] + z h'[u^{\ast}] & = & 0,\\
\label{nec2} h(u^{\ast}) & = & 0.
\end{eqnarray}

\subsubsection{Sufficient conditions}
We now prove that the necessary conditions (\ref{nec1}) and (\ref{nec2}) are in fact also sufficient.
Because our objective is linear and our constraint is of inner product form, we take a direct approach to developing sufficient conditions, avoiding more complicated general theory.
\begin{proposition}\label{lem:suff}
Let $\mathcal{C}=U\cap \{h\le 0\}$.
There are exactly two elements of $\mathcal{C}$ that satisfy \eqref{nec1} and~\eqref{nec2}.
One is the unique minimizer (with $z>0$) and the other is the unique maximizer (with $z<0$).
\end{proposition}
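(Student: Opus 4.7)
The plan is to convert the Lagrange conditions \eqref{nec1}--\eqref{nec2} into a single scalar equation for the multiplier $z$, show it has exactly two solutions (one positive, one negative), and then check by direct substitution that the corresponding stationary points of $L$ are respectively the unique minimizer and the unique maximizer of $c$ on $\mathcal{C}$.

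First I would set up the relevant Hilbert-space structure on $U$. Since $U$ is a closed subspace of $H^m$ and $B_\omega$ is a bounded, symmetric, positive-definite bilinear form on $H^m$, its restriction to $U\times U$ is again bounded, symmetric and positive-definite, so Theorem~\ref{thm:bil_form} produces an operator $J_B^U : U\to U$ with $\langle J_B^U u,v\rangle_{H^m}=B_\omega(u,v)$ for all $u,v\in U$; this operator is linear, bounded, self-adjoint and boundedly invertible on $U$. Because $c$ is a bounded linear functional on $H^m$, its restriction to $U$ is represented by a unique $c^{\#}\in U$ via $c(v)=\langle c^{\#},v\rangle_{H^m}$ for all $v\in U$; the assumption that $c$ does not vanish on $\mathcal{C}$ gives $c^{\#}\neq 0$.

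Next I would rewrite the necessary conditions. Since $c$ is linear, $c'[u^{\ast}]=c$, and from $h(u)=B_\omega(u,u)-R^2$ one computes $h'[u^{\ast}](v)=2B_\omega(u^{\ast},v)=2\langle J_B^U u^{\ast},v\rangle_{H^m}$ for $v\in U$. Thus \eqref{nec1} read on tangent directions $v\in U$ is equivalent to
\begin{equation*}
    c^{\#}+2z\,J_B^U u^{\ast}=0\quad\text{in }U.
\end{equation*}
Since $u^{\ast}=0$ is excluded by \eqref{nec2} (as $h(0)=-R^2\neq 0$), we have $z\neq 0$, and the relation solves uniquely as
\begin{equation*}
    u^{\ast}=-\frac{1}{2z}(J_B^U)^{-1}c^{\#}.
\end{equation*}
Substituting into \eqref{nec2} yields
\begin{equation*}
    R^2=B_\omega(u^{\ast},u^{\ast})=\frac{1}{4z^2}\langle c^{\#},(J_B^U)^{-1}c^{\#}\rangle_{H^m}.
\end{equation*}
Because $(J_B^U)^{-1}$ is positive-definite and $c^{\#}\neq 0$, the right-hand side constant is strictly positive, so this equation has exactly two solutions $z=\pm z_0$ with
\begin{equation*}
    z_0=\frac{1}{2R}\sqrt{\langle c^{\#},(J_B^U)^{-1}c^{\#}\rangle_{H^m}}>0,
\end{equation*}
and hence exactly two candidate points $u^{\ast}_{\pm}=\mp\frac{1}{2z_0}(J_B^U)^{-1}c^{\#}$ in $\mathcal{C}$.

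Finally I would identify which candidate is which. Evaluating the objective at a candidate and using $c(u^{\ast})=\langle c^{\#},u^{\ast}\rangle_{H^m}$ gives
\begin{equation*}
    c(u^{\ast})=-\frac{1}{2z}\langle c^{\#},(J_B^U)^{-1}c^{\#}\rangle_{H^m},
\end{equation*}
so the candidate with $z=+z_0$ attains a strictly negative value of $c$ and the one with $z=-z_0$ attains the opposite, strictly positive value. Because Lemma~\ref{lem:opt_sol} gives a unique minimizer of $c$ on $\mathcal{C}$, and applying the same lemma to $-c$ gives a unique maximizer, and since every extremum must satisfy the necessary conditions at a regular point, these two candidates must be precisely the unique minimizer (with $z>0$) and the unique maximizer (with $z<0$).

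The only step requiring real care is the reduction to the Hilbert-space equation on $U$: one must verify that $B_\omega$ restricted to $U\times U$ retains coercivity (hence $J_B^U$ is invertible on $U$) and that the Lagrange multiplier rule from Lemma~\ref{lem:nec} is correctly applied with $U$ playing the role of the ambient Banach space and $h$ viewed as a map $U\to\mathbb{R}$. Once this subspace formulation is in place, the remainder reduces to the scalar equation $z^2=\text{const}>0$ and a sign check.
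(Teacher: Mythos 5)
Your proof is correct and closely tracks the paper's underlying ideas, but you organize the argument differently: you solve the KKT system explicitly, parametrize the stationary points by the multiplier~$z$, and reduce the constraint to the scalar equation $z^2=\text{const}>0$, which immediately counts the solutions and hands you the explicit formula (essentially merging the proposition's proof with the derivation of \eqref{eq:expl-sol} that the paper places afterward). The paper instead argues indirectly: it supposes two KKT pairs $(u,z_u)$, $(w,z_w)$, subtracts the stationarity conditions, and uses Riesz representation plus the equal-norm condition $h(u)=h(w)=0$ to conclude $w=\pm u$, hence at most two solutions, and then identifies min/max from the sign of $z$ exactly as you do. Both arguments are sound; yours is a bit more constructive and also more careful about the subspace: you correctly set up the representing operator $J_B^U$ and the Riesz representer $c^{\#}$ \emph{restricted to}~$U$, whereas the paper works with $J_B(\omega)$ and $c_R$ on the full space $H^m$ and is somewhat loose about the fact that the Lagrange condition only lives on~$U$. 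One implicit assumption you (reasonably) make explicit is that $U$ is a \emph{closed} subspace of $H^m$, which the paper needs but does not state; it is required for $\mathcal{C}=U\cap\{h\le 0\}$ to be closed and for $U$ to be a Hilbert space on which Theorem~\ref{thm:bil_form} applies.
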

\begin{proof}
Lemma \ref{lem:opt_sol} guarantees the existence of at least two extrema (one minimum and one maximum), and therefore at least two distinct elements $u,w\in \mathcal{C}$ satisfying (\ref{nec1}) and (\ref{nec2}).
We show that these are the only such elements. There exist $z_u, z_w\in\mathbb{R}$ such that
$$c'[u] + z_u h'[u] = 0\mbox{ and } c'[w] + z_w h'[w] = 0.$$
Subtracting these two equations and using $c'[u](\cdot)=c'
[w](\cdot)=c(\cdot)$, we obtain the functional equation
$h'[w]=(z_u/z_w)h'[u]$.
Thus, the linear functional $h'[w]$ is a scalar multiple of the linear functional $h'[u]$.
Since $h'[u](\cdot)=2 \langle \cdot , J_B(\omega) u \rangle_{H^m}$, by the Riesz representation theorem, we have $w=(z_u/z_w)u$.
However, the necessary condition $h(u)=h(w)=0$ implies that $h'[u](u)= 2 \langle u , u \rangle_{m,\omega}= 2 \|u\|_{m,\omega}^2=2 R^2$ and similarly that $\|w\|_{m,\omega}^2=R^2$.
Thus, either $z_u=z_w$ or $z_u=-z_w$.
If $z_u=z_w$, then we have $u=w$, while if $z_u=-z_w$, then $u=-w$.
Thus, the only possibility for distinct $u$ and $w$ is that $u=-w$, and therefore that there are at most two functions satisfying the necessary conditions.
Finally, without loss, assume that $u$ is a minimum.
Since $c'[u](u) + z_u h'[u](u)=c(u)+z_u R^2=0$ and $c(u)<0$ if $u$ is a minimum, we must have $z_u>0$.
Therefore $z_w<0$, implying that $c(w)>0$ and that $w$ is a maximum.
\end{proof}


Using the injective operator $J_B(\omega)$ we can solve the necessary and sufficient conditions \eqref{nec1} and \eqref{nec2} for the optimal solution $u^{\ast}$, leading to~\eqref{eq:expl-sol} below. First we can transform \eqref{nec1} into an equation in $H^m$ using the Riesz representation theorem. We now know that $u^{\ast}$ exists and fulfills the following equation.
\begin{equation}\label{eq:nec1b}
	c_R + 2 z J_B(\omega) u^{\ast} = 0 \text{ in } H^m.
\end{equation}
Here $c_R \in H^m$ is the Riesz representation of the functional $c$ on $H^m$. Thus $c_R$ is in the range of $J_B(\omega)$ and we can apply $J_B(\omega)^{-1}$ to $c_R$. Now we can solve \eqref{eq:nec1b} for $J_B(\omega) u^{\ast}$ and for $u^{\ast}$ because Proposition \ref{lem:suff} guarantees $z\neq 0$, giving
\begin{equation*}
	J_B(\omega) u^{\ast} = - \frac{1}{2z} c_R, \qquad u^{\ast} = -\frac{1}{2z} J_B (\omega)^{-1}c_R.
\end{equation*}
Inserting these expressions into \eqref{nec2} leads to
\begin{equation}\label{eq:nec2b}
	R^2 = \dfrac{\langle J_B(\omega)^{-1} c_R , c_R \rangle_{H^m}}{4z^2}.
\end{equation}
Solving \eqref{eq:nec2b} for $z>0$ (minimizer) and using this $z$ leads to the following explicit expressions
\begin{equation}\label{eq:expl-sol}
	0 < z = \dfrac{ \langle J_B(\omega)^{-1} c_R,c_R \rangle_{H^m}^{\frac{1}{2}}}{2R}, \qquad u^{\ast} = - \dfrac{J_B(\omega)^{-1}c_R}{2 z} \; .
\end{equation}

\section{Optimization of $\mu_k$ numerically}
\label{sec:opt_num}

In this section we apply the results derived in section~\ref{sec:opt} to some examples.

\subsection{Discrete optimization problem}
\label{ssec:disc_opt}

Before discretising the objective functional we want to construct $\mathcal{C}_N$, a finite dimensional version of the constraint set $\mathcal{C}$. Therefore we choose finitely many basis elements $\{\varphi_\ell\}_{\ell=1,\ldots , N}$ spanning the admissible subspace~$U_N := \text{span} \{\varphi_\ell\}_{\ell=1,\ldots , N}$ for our perturbations.
We then intersect $U_N$ with $\mathcal{C}$ and represent elements by their coefficient vectors in~$\mathbb{R}^N$ with respect to the chosen basis. Hence we define $\mathbb{R}^N \supset \mathcal{C}_N \simeq \mathcal{C} \cap U_N \subset H^m$.

Coefficient vectors will be denoted with a bar~$\bar{\phantom{u}}$ in the following. We will omit the $\hat{\phantom{X}}$ in the following calculations, but note that it can be done analogously using augmented reflected objects.
The energy neighborhood constraint \eqref{eq:B-omega} can be expressed as a quadratic constraint in the coefficient vector $\bar{u}  \in \R^N$,
\begin{equation*}
	(B_{\omega})_{ij} := \sum_{|\alpha|\leq m} \omega_{\alpha} \langle D^{\alpha} \varphi_i , D^{\alpha} \varphi_j \rangle_{L^2}, \qquad \bar{u}^T B_{\omega} \bar{u} \leq R^2 \Leftrightarrow \bar{u} \in \mathcal{C}_N \simeq \mathcal{C} \cap U_N \, .
\end{equation*}
This constraint describes a strictly convex set (ball or ellipsoid) in~$\R^N$. Regarding the objective functional
\begin{equation*}
 c(u) = \langle \bm{g}^k, \bm{E} \bm{f}^k \rangle_{L^2},
\end{equation*}
we have to account for the two possibly different bases for discretization: (i) the discretization of $u$, and (ii) the discretization of $\bm{\hat{G}}$, which can also involve a test and an ansatz basis.
Let us denote the basis functions for the discretization of $\bm{f}^k$ by $\{\chi_j\}_{j=1,\ldots ,m}$ and $\bm{g}^k$ by $\{\xi_i\}_{i=1,\ldots , n}$ for now.
Then for $\bm{f}^k=\sum_{j=1}^m \bar{f}_j^{k} \chi_j$ and $\bm{g}^{k}=\sum_{i=1}^n \bar{g}^{k}_i \xi_i$ we have
\begin{align*}
	c(u) &= \langle \bm{g}^{k}, \bm{E} \bm{f}^{k} \rangle_{L^2} = \sum_{i=1}^n \Big\langle \bar{g}^{k}_i \xi_i, \bm{E}\big(\sum_{j=1}^m \bar{f}_j^{k} \chi_j\big) \Big\rangle_{L^2}\\
	&\phantom{=} \quad = \sum_{\ell=1}^N \sum_{i=1}^n \sum_{j=1}^m \bar{g}^{k}_i \bar{f}^{k}_j \langle \chi_j , - \text{div}_{(\theta ,x)} \left( \begin{pmatrix}0 \\ \varphi_l \end{pmatrix} \xi_i \right) + \begin{pmatrix} 0 \\\varphi_\ell \end{pmatrix}^T \nabla_{(\theta , x)} \xi_i \rangle_{L^2} \bar{u}_\ell.
\end{align*}
Using this last equation we can calculate a finite-dimensional representation of $\bm{E}$ acting from $\text{span}\{\xi_j\}_j$ to $\text{span}\{\chi_i\}_i$.

Due to linearity we may decompose in $\ell$ and separately compute $\bm{E}_\ell$ in a similar way to the numerical approximation of  $\bm{G}$, outlined in Section \ref{ssec:num_disc}.
We use Ulam's method to discretize the generators, taking $m=n$ and $(\xi_j)_j = (\chi_i)_i$ to be indicator functions of space-time boxes.
The cost vector can be constructed by
\begin{equation*}
	\bar{c}_\ell:=c(\varphi_\ell) = \bar{g}^k \bm{\bar{E}}_\ell \bar{f}^k = \sum_{j=1}^m \sum_{i=1}^n \bar{g}^k_j \bar{f}^k_i \langle \chi_j , - \text{div}_{(\theta,x)} \left(\begin{pmatrix} 0 \\ \varphi_\ell \end{pmatrix} \xi_i \right) \rangle_{L^2}, \quad \ell=1,\ldots,N.
\end{equation*}
The discretized optimization problem then has a linear objective and a single quadratic constraint
\begin{align*}
	\max 		\, & \quad \bar{c}^T \bar{u} \\
	\text{s.t.} 	\, & \quad  \bar{u}^T B_{\omega} \bar{u} - R^2 \leq 0.
\end{align*}
Since the energy constraint is induced by a scalar product, the matrix $B_{\omega}$ is invertible by typical arguments for Galerkin discretization (i.e., $B_{\omega}$ is symmetric positive definite). Thus the optimal solution can be obtained with Lagrange multipliers, identical to the analysis leading to (\ref{eq:expl-sol}), and is given by
\begin{equation} \label{eq:expl-sol-discr}
	0 < z = \dfrac{\left( \langle B_{\omega}^{-1}\bar{c} , \bar{c} \rangle \right)^{\frac{1}{2}}}{2R}>0, \qquad \bar{u} = -\frac{1}{2z} B_{\omega}^{-1} \bar{c}.
\end{equation}

In practice we choose the ``admissible energy'' $R$ sufficiently small to  ensure the approximate validity of our linearised objective functional. We can then iterate the optimization process as a gradient ascent/decent method to invest more cumulative energy in the perturbation. Each step consists of constructing $\bar{c}$ and solving the equations \eqref{eq:expl-sol-discr}. The construction of $\bar{c}$ requires the calculation of $\bm{G}$, $\bm{g}^k$, $\bm{f}^k$, and $\bm{E}_\ell$ (or the respective augmented reflected objects); the latter are fixed through all optimization steps and do not need to be updated.
We apply this procedure in the following examples.

\begin{remark}
All of our finitely many perturbation ansatz functions $(\varphi_\ell)_{\ell=1,\ldots , N}$, introduced in the next section \ref{ssec:perturbations}, are $C^{\infty}$ thus $\| \cdot \|_{H^m}$ and $\| \cdot \|_{L^2}$ are equivalent on $\text{span}\{\varphi_l\}_l$.
Therefore for numerical convenience, in the numerical examples we use $m=0$ to calculate $B_{\omega}$, although the functional $c$ is only strictly well defined for $m\geq 1$.
\end{remark}

\subsection{Perturbing fields}
\label{ssec:perturbations}

We construct a suitable basis for velocity field perturbation as follows.
Our spatial domain will be a rectangle.
In order to impose zero velocity normal to the boundary and divergence-freeness, we construct the spatial components of the basis vectors $\{\varphi_\ell\}_{\ell=1,\ldots,N}$ with a possible constant movement in $x$ or $y$ direction from smooth stream functions $\Psi_{kl}$ and then multiply these components with time-dependent scalar (amplitude) functions $\phi_r$. For a rectangular domain $[a_x,b_x] \times [a_y,b_y]$ we take the streamfunctions
\begin{equation}\label{eq:pert-lib}
	\Psi_{kl} (t,x,y) = \sin \left(\frac{k\pi (x-a_x-c_xt)}{b_x-a_x}\right) \sin\left(\frac{l \pi (y-a_y-c_yt)}{b_y-a_y}\right),
\end{equation}
$k=1,\ldots,K$, $l = 1,\ldots,L$, which are slightly modified Fourier modes that induce a velocity field $\psi_{kl} := (-\frac{\partial \Psi_{kl}}{\partial y} , \frac{\partial \Psi_{kl}}{\partial x})$ with $k$ horizontal gyres and $l$ vertical gyres that satisfy the homogeneous Neumann boundary conditions in space and are divergence free in space. These fields may travel in $x$ direction with speed $c_x$ or in $y$ direction with speed~$c_y$.

We use $L^2 ((0,\tau)\times X)$-normalized versions of the functions
\[
\varphi_{kl,r}(t,x,y) := \phi_r(t) \psi_{kl}(t,x,y),
\]
where $\phi_{r}$ is a scalar (temporal) modulation of the amplitude of the spatial Fourier modes:
\begin{equation*}
	\phi_{-1}(t) := \frac{t}{\tau}, \qquad \phi_{r}(t) := \sin^{r}\left(\frac{t}{\tau}2\pi \right), \quad r = 0,2.
\end{equation*}
We omit using $r=1$ for the $\sin$-modulation, since it would have both positive and negative values, meaning a sign change of the perturbing velocity field during the evolution. Such perturbing fields proved to be less efficient in early numerical experiments. Thus, the increasing time-linear modulation is  assigned $r=-1$ to avoid confusion. In summary, time-modulation of the perturbing fields is described by~$\phi_r(t)$,~$r\in\{-1,0,2\}$, and we have $3KL=N$ basis functions $\{\varphi_\ell\}_{\ell=1,\ldots,N}$ in total.

\subsection{Increasing coherence: forced double gyre flow}
\label{ssec:incr_coh}

Extending the example from section \ref{ssec:perDG}, our goal in this section is to increase coherence of the left-right separation captured by the $2$nd eigenvalue of~$\bm{\hat{G}}$ (equivalently, the left $2$nd singular vector of the transfer operator~$\mathcal{P}_{0,\tau}$), Figure~\ref{fig:double-eig}~(a).

The original velocity field has a total energy (space-time $L^2$ norm) of $\approx 1.6$.
We will use an optimization budget of $R=0.05$
 per step to increase the $2$nd eigenvalue $\mu_2$ of $\bm{\hat{G}}$ which encodes the left-right separation of the domain.
We iterate our optimization procedure $8$ times to invest a total energy of $0.4$, which is $25\%$ of the energy of the original velocity field.
This seems like a moderate investment, but we note that while in general it is easy to destroy coherence by almost any perturbation, to increase it, the dynamics and the perturbation need to work together---making it harder to increase coherence than to decrease it. Following the formula \eqref{eq:expl-sol-discr} to optimize coherence (i.e, minimize mixing) we use~$-\bar{u}$, where we recall that~$\bar{u}$ denotes the coefficient vector representing the optimal perturbation. Of course the other eigenvalues of the generator also change as the velocity field is perturbed. In each iterative step we check \emph{a posteriori} that the second eigenvalue did indeed increase, which we consider an indicator for the validity of our objective functional. Our perturbation library consists of the functions from \eqref{eq:pert-lib} for $k=1,\ldots ,5$, $l=1,2,3$ with $c_x = 0 = c_y$ and the time modulation $r=-1,0,2$, hence $N=45 = 5\cdot 3 \cdot 3$. After the 8 iteration steps we arrive at the solution vector $\bar{u}_{(8)} \in \mathbb{R}^{N}$ and the effective change of the second eigenvalue singular value:
\begin{equation*}
\mu_2(u_{(8)}) - \mu_2(0) = 0.0233, \quad \dfrac{\mu_2(u_{(8)}) - \mu_2(0)}{| \mu_2(0)|} = 0.2575, \quad \dfrac{\sigma_2(u_{(8)}) - \sigma_2(0)}{| \sigma_2(0)|} = 0.0975.
\end{equation*}
Each step of the iteration increases the eigenvalue roughly by $0.003$.
The result is visualized in Figure~\ref{fig:dg-inc}.

\begin{figure}[hbt]
\centering
\begin{subfigure}{0.49\textwidth}
\includegraphics[width=1\textwidth]{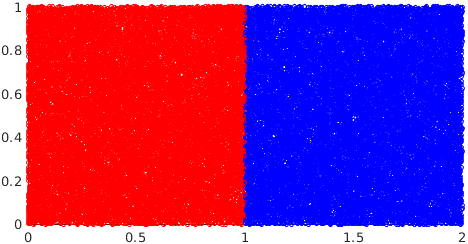}
\subcaption{Imposed coloring at final time.\\
\phantom{VOIDLINE}\\
\phantom{VOIDLINE}}\label{fig:DG-LR-orig-final}
\end{subfigure}
\hfill
\begin{subfigure}{0.49\textwidth}
\includegraphics[width=1\textwidth]{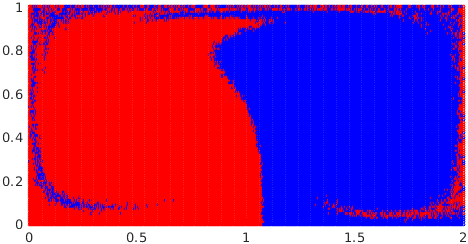}
\subcaption{Coloring at initial time corresponding to (a) using the original unperturbed velocity field.}\label{fig:DG-LR-orig}
\end{subfigure}\\
\begin{subfigure}{0.49\textwidth}
\includegraphics[width=1\textwidth]{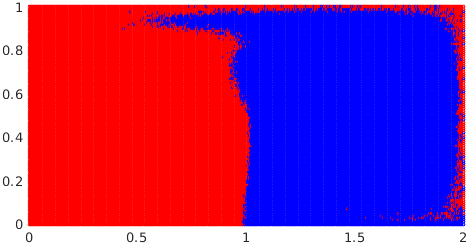}
\subcaption{Coloring at initial time corresponding to (a) using the velocity field optimized for increasing coherence.}
\label{fig:DG-LR-Inc}
\end{subfigure}
\hfill
\begin{subfigure}{0.49\textwidth}
\includegraphics[width=1\textwidth]{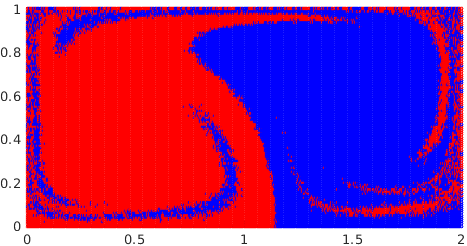}
\subcaption{Coloring at initial time corresponding to (a) using the velocity field optimized for decreasing coherence.}
\label{fig:DG-LR-Dec}
\end{subfigure}
\caption{(a) The left-right (red-blue) coloring imposed on evolved particles ($\varepsilon=0.01$) at final time $t=\tau$;  (b) The particles shown at time~$t=0$ for the original flow; (c) The particles shown at time~$t=0$ for the coherence-increasing optimized velocity field from Section \ref{ssec:incr_coh}; (d) The particles shown at time~$t=0$ for the coherence-decreasing optimized velocity field from Section \ref{ssec:decr_cohDG}.}
\label{fig:dg-inc}
\end{figure}

We seeded $200,000$ particles on the right side of the line $x=1$ and evolved them forward in time with the noisy flow, using~$\varepsilon = 0.1$ (Runge--Kutta--Maruyama with time step size $h = \frac{1}{100}$) for $\tau = 4$ time units using the original and the optimized drift. For the original velocity field roughly $15\%$ end up on the left side of the domain, where as for the optimally increased left-right separation only about $10\%$ of the particles end up on the left side (results not shown). We repeated the same noisy evolution procedure for $\varepsilon=0.01$; then the $9\%$ of particles changing sides originally were reduced to $5\%$ for the coherence-improved velocity field.
This is shown in Figure~\ref{fig:dg-inc} (b)--(d), where the seeded particles are shown at initial time and colored according to whether they end up left or right of the line $x=1$ after this noisy evolution, at time~$t=\tau$.
Note that the time direction is unimportant and we could have colored at the initial time $t=0$ and evolved forward.

The optimization of the left-right coherence in the periodically forced double gyre has also been considered in ~\cite[Section 6.3]{Froyland2016}; see, in particular Figure~18 therein, where the regular regions of the flow were dramatically increased.

\subsection{Decreasing coherence: forced double gyre flow}
\label{ssec:decr_cohDG}

We now wish to diminish the coherence of various coherent features.
Firstly, the left-right separation discussed in Section \ref{ssec:incr_coh}.
Using the same optimization protocols as in Section \ref{ssec:incr_coh}, but switching the sign of the objective, we produce a velocity field that should increase mixing across the left-right separatrix.
This is indeed indicated in Figure \ref{fig:dg-inc} (d).
These results are consistent with the results of \cite{Froyland2016}, where the ``lobes'' of stable and unstable manifold intersections greatly increased~\cite[Figures 11 and 14]{Froyland2016}.

We now turn our attention to the two central vortices, which are encoded in the $5$th eigenmode (and after one iteration step of the optimization in the $6$th eigenmode).
We use the same perturbation basis and energy criterion for the iteration as above ($8$ iterative optimization steps). After a first optimization step of the iteration described at the end of section \ref{ssec:disc_opt} the gyre feature (initially $5$th eigenvalue) is pushed to the $6$th eigenvalue spot, i.e., there is an interchange position with the mentioned features in terms of ranking with respect to their coherence. Thus, we have to keep track of the ranking of the eigenvalues during our iterative optimization procedure, which we do here manually between each step. This could be done in an automated fashion similarly to section~\ref{ssec:BickleyCohset}, by computing correlations between eigenvectors of successive iterates.

We obtain the following change in the eigen- and (corresponding) singular values:
\begin{equation*}
\mu_5(0) -\mu_6(u_{(8)}) = 0.16\qquad \dfrac{\mu_5(0) -\mu_6(u_{(8)})}{| \mu_5(0)|} = 0.36 \qquad \dfrac{\sigma_5(0) - \sigma_6(u_{(8)})}{| \sigma_5(0)|} = 0.48
\end{equation*}
Next we seed particles in the vortices induced by the level sets of the $5$th eigenvector shown in Figure~\ref{fig:double-eig} and evolve them with and without perturbation. The results are visualized in Figure~\ref{fig:DG-dec}.
\begin{figure}[htbp]
\begin{subfigure}{0.32\textwidth}
\includegraphics[width = 1\textwidth]{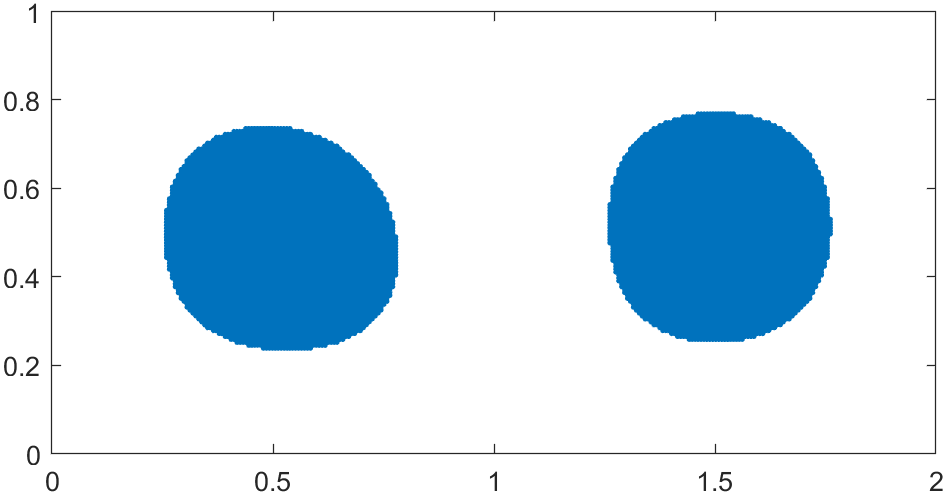}
\subcaption{Initial sets for the evolution (vortices seeded according to $5$th eigenvector)}
\end{subfigure}
\hfill
\begin{subfigure}{0.32\textwidth}
\includegraphics[width = 1\textwidth]{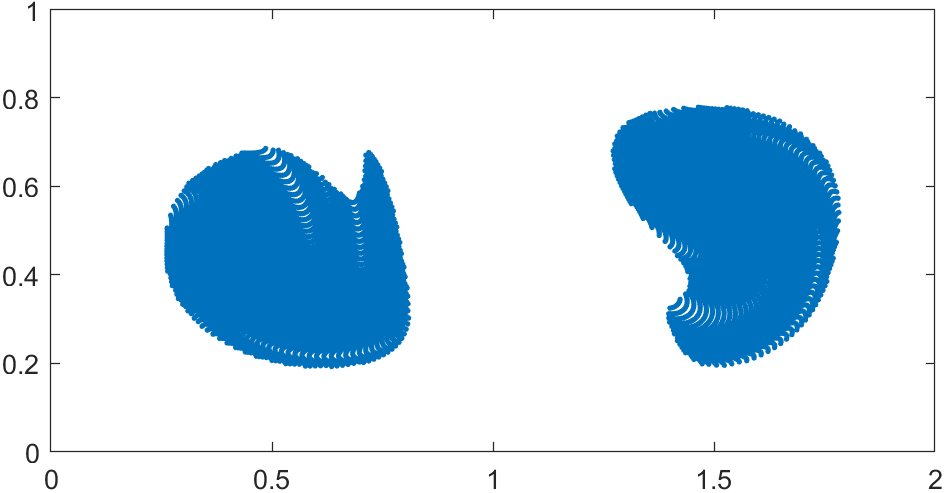}
\subcaption{Particles after original evolution for $\tau = 4$ time units.}
\end{subfigure}
\hfill
\begin{subfigure}{0.32\textwidth}
\includegraphics[width = 1\textwidth]{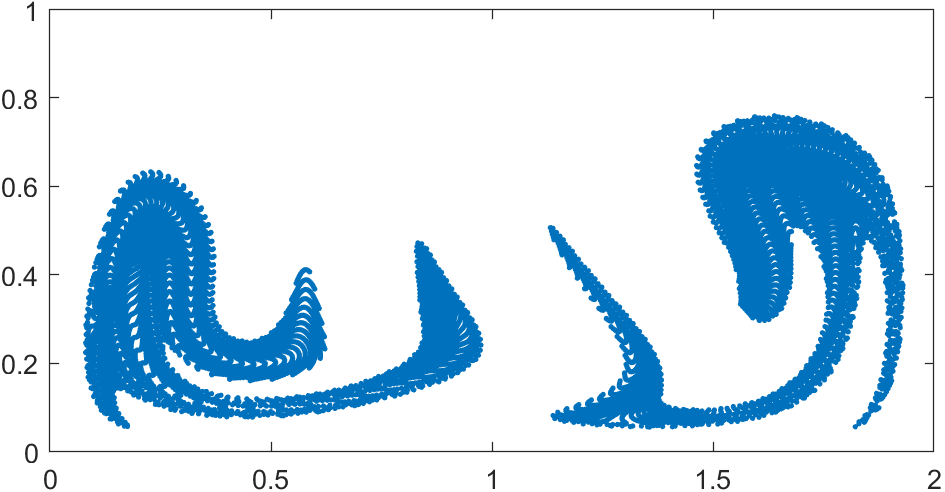}
\subcaption{Particles after optimally perturbed evolution for $\tau = 4$ time units.}
\end{subfigure}
\caption{Test particles and their forward-time evolution.}
\label{fig:DG-dec}
\end{figure}
We note that increasing mixing in the double gyre flow has been considered in \cite[Section 6.2]{Froyland2016}; in particular our Figure~\ref{fig:DG-dec} could be compared with Figures 13, 16, and 17 therein.

\subsection{Targeted manipulation of distinguished coherent features}
\label{ssec:decr_cohBJ}

In certain situations it may be of interest to manipulate the mixing of parts of phase space that do not arise as eigenmodes.
For example, these parts of phase space may be individual (or combinations of) SEBA vectors as described in section~\ref{ssec:SEBA}, or they may be related to the phase space geometry.
We describe a procedure to accomplish this.

We recall Proposition~\ref{augmen} and Theorem~\ref{thm:cohratio}, where we showed
\begin{equation} \label{eq:CohEquivRecap}
\begin{aligned}
\mathcal{P}_{0,\tau}^*\mathcal{P}_{0,\tau} \bm{f}(0,\cdot) &= e^{2\tau\mu}  \bm{f}(0,\cdot)\\
&\Updownarrow\\
\bm{\hat{G}}\bm{f} &= \mu \bm{f}, \text{ where }(\bm{\hat{G}}\bm{f})(\theta,\cdot) = -\partial_{\theta} \bm{f}(\theta,\cdot) + \hat{G}(\theta) \bm{f}(\theta,\cdot),
\end{aligned}
\end{equation}
and that the sign structure of $\bm{f}$ (recall $\bm{f}$ necessarily has zero mean) indicates a family of finite-time coherent sets. Further, the coherence ratio of the family is bounded by an expression involving $\mu$, indicating more coherence the closer $\mu$ is to~$0$.
By normalising $\bm{f}$ so that $\|\bm{f}\|=1$ we see
smaller $\|\bm{\hat{G}} \bm{f}\|$ corresponds to more strongly coherent features encoded in~$\bm{f}$.

Now let $\bm{\varphi} \in \mathcal{D}(\bm{\hat{G}})$ be a general normalised, zero-mean \emph{space-time feature};  that is, $\bm{\varphi}$ is not an eigenfunction.
We might think of $\bm{\varphi}$ being mean-removed SEBA vector or a mollified (such that it is in the domain $\mathcal{D}(\bm{\hat{G}})$ of the generator) version of $\bm{\varphi} = \mathds{1}_{\bm{C}} - |\bm{C}|\mathds{1}_{\bm{X}}$, the mean-centred\footnote{We note that the removal of the mean from $\bm{\varphi}$ makes no difference for the optimization of the objective function in~\eqref{eq:objfun_feature} below, since $\bm{\hat{G}}\mathds{1}_{\bm{X}}=0$, however we keep this for the intuitive connection with ``eigenfeatures'' and Theorem~\ref{thm:cohratio}.} indicator function of a possibly coherent family~$\bm{C}\subset \bm{X}$ of sets in augmented-space representation, where $|\bm{C}|$ denotes the augmented-space Lebesgue measure of~$\bm{C}$. As we would in general like $\bm{\varphi}$ to represent a finite-time coherent set, we should restrict our attention to features satisfying $\bm{\varphi}(t,\cdot) \approx \bm{\varphi}(2\tau-t,\cdot)$.

Analogously to the case of an eigenfunction $\bm{f}$, to quantify the \emph{coherence of a feature} $\bm{\varphi}$ that is not necessarily an eigenfunction, we  employ the heuristic of measuring~$\|\bm{\hat{G}}\bm{\varphi}\|$. The rationale for this is as follows. If a family of sets encoded by the eigenvector $\bm{f}$ is completely coherent (in the absence of diffusion), then the temporal change (``movement'') of the sets at any time $\theta$, namely $\partial_{\theta}\bm{f}(\theta)$, would be identical to how the dynamics transports the mass located in the set, i.e., $\partial_\theta\bm{f}(\theta)=\hat{G}(\theta)\bm{f}(\theta,\cdot)$ by the Fokker--Planck equation~\eqref{FP2}.
Thus, if the coherence of the feature~$\bm{\varphi}$ is strong, one has $\partial_{\theta}\bm{\varphi}(\theta,\cdot)-\hat{G}(\theta)\bm{\varphi}(\theta,\cdot) \approx 0$ for all $\theta$, leading to~$\|\bm{\hat{G}}\bm{\varphi}\|\approx 0$. Section~\ref{sec:flux} gives a geometric view on the very same situation: in~\eqref{outflux0} and~\eqref{absflux1}, if the boundary of a time-dependent set moves with a velocity $b(t,x)$ that is approximately equal to the velocity field $v(t,x)$ driving the dynamics, then the outflow from this family of sets will be small---and this can analogously be quantified by the space-time flux~\eqref{augabsflux0}.

Thus, to destroy a coherent feature encoded in $\bm{\varphi}$ we could maximize $\|\bm{\hat{G}}\bm{\varphi}\|^2$ with respect to the perturbing fields~$u$. Again, as this is a nonlinear problem, we approach it by local optimization, and aim to maximize the objective function given by the local linear change,
\begin{equation} \label{eq:objfun_feature}
\begin{aligned}
		c_{\bm{\varphi}} (u) &= \frac{d}{d \delta}\left( \big\| \bm{\hat{G}}(v+\delta u)\bm{\varphi} \big\|^2\right)\!\Big\vert_{\delta = 0} =  \frac{d}{d \delta} \left( \big\| \big( \bm{\hat{G}}(v)+\delta \bm{\hat{E}}(u)\big)\bm{\varphi} \big\|^2\right)\!\Big\vert_{\delta = 0} \\
		&= 2 \left\langle \bm{\hat{G}}(v)\bm{\varphi},\bm{\hat{E}}(u)\bm{\varphi}\right\rangle
\end{aligned}
\end{equation}
subject to constraints on the perturbation~$u$.
Conversely, if we wish to enhance a coherent feature $\bm{\varphi}$ we should minimize $\|\bm{\hat{G}}\bm{\varphi}\|^2$.
If we would simultaneously like to destroy coherence of a feature $\bm{\varphi}_1$ and enhance the coherence of other features encoded in $\bm{\varphi}_2$, then we would maximize
\begin{equation*}
c_{\bm{\varphi}_1,\bm{\varphi}_2} (u) = \alpha_1 c_{\bm{\varphi}_1} (u) - \alpha_2 c_{\bm{\varphi}_2} (u),
\end{equation*}
with weights~$\alpha_1, \alpha_2>0$.

\subsection{Non-eigenfeature optimization: traveling wave}
We consider a traveling wave example~\cite{Pie91,SaWi06,Froyland2010} given by
\begin{equation*}
	x'(t) = c_{\text{drift}} - A \sin (x-\nu t) \cos (y) \qquad y'(t) = A \cos (x-\nu t) \sin (y)
\end{equation*}
on the domain $[0,8]\times (2\pi S^1\times[0,\pi])$. Here, two rotating gyres move in $x$ direction with speed $\nu=0.25$ and are superimposed with a constant drift~$c_{\text{drift}}=1$. To account for this constant speed and periodicity in $x$-direction we choose $k=2,4, \ldots , 20$, $l=1,\ldots , 5$ and $c_x=\nu$ for our perturbation dictionary; see Section~\ref{ssec:perturbations} and equation \eqref{eq:pert-lib}.
We remark that Balasuriya~\cite{Bal15} has investigated a similar dynamical system and considered single ``one at a time'' (as opposed to general linear combinations of) perturbations drawn from a family similar to ours.
In \cite{Bal15}, flux out of a small ``gate'' connecting a stable and unstable manifold is taken as a measure of mixing.
In contrast, we measure mixing through the $L^2$-norm decay of an initial concentration field, and we find the unique perturbation in a convex subset of a 150-dimensional subspace that maximizes the change in mixing rate.

We use the resolution $80\times (80 \times 40)$ and $\varepsilon = 0.1$. We now take a feature that is not described by an eigenfunction and aim to increase its coherence. The feature we choose is the mean-centered version of the following time-constant and horizontally constant profile
\begin{equation}\label{eq:cos-feature}
	\bm{\varphi} (t,x,y) = 1 - \cos(2y),
\end{equation}
which is shown in the coloring of Figure~\ref{fig:feature} (a). We iteratively update the perturbation by solving problem \eqref{eq:objfun_feature} in each step. We iterate for 35 steps with an energy budget of $R=0.1$ ($\sim 1 \%$ of the original energy of $v$) per step. Figure \ref{fig:feature} (b) shows the final time slice of the original (deterministic) evolution of the particles in Figure~\ref{fig:feature} (a), while Figure~\ref{fig:feature} (c) shows the final time slice of the optimized evolution.

\begin{figure}[hbt]
\begin{subfigure}{0.32\textwidth}
\includegraphics[width = 1\textwidth]{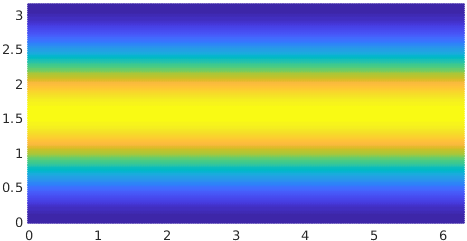}
\subcaption{Initial particles colored according to the chosen feature \eqref{eq:cos-feature}. Yellows is high density and blue is low density.}
\end{subfigure}
\hfill
\begin{subfigure}{0.32\textwidth}
\includegraphics[width = 1\textwidth]{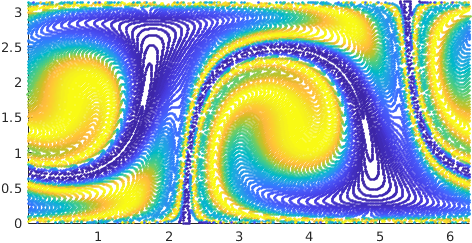}
\subcaption{Particles after original evolution for $\tau = 4$ time units.\\
\phantom{VOIDLINE}\\
\phantom{VOIDLINE}}
\end{subfigure}
\hfill
\begin{subfigure}{0.32\textwidth}
\includegraphics[width = 1\textwidth]{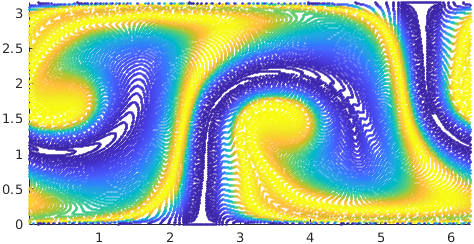}
\subcaption{Particles after optimally perturbed evolution for $\tau = 4$ time units.\\
\phantom{VOIDLINE}\\
\phantom{VOIDLINE}}
\end{subfigure}
\caption{Forward evolution of particles colored by the chosen feature.}
\label{fig:feature}
\end{figure}
We next investigate which perturbing basis functions are favored by the optimization. First, we note that the basis function induced by~$\varphi_{0,2,1}$ is equal to the original velocity field up to the constant horizontal drift $c_{\text{drift}}$. Thus, it is conceivable that this basis function is heavily used in order to partly cancel the original velocity field and slow the flow down.

\begin{figure}[hbt]
\includegraphics[width = 1.0\textwidth]{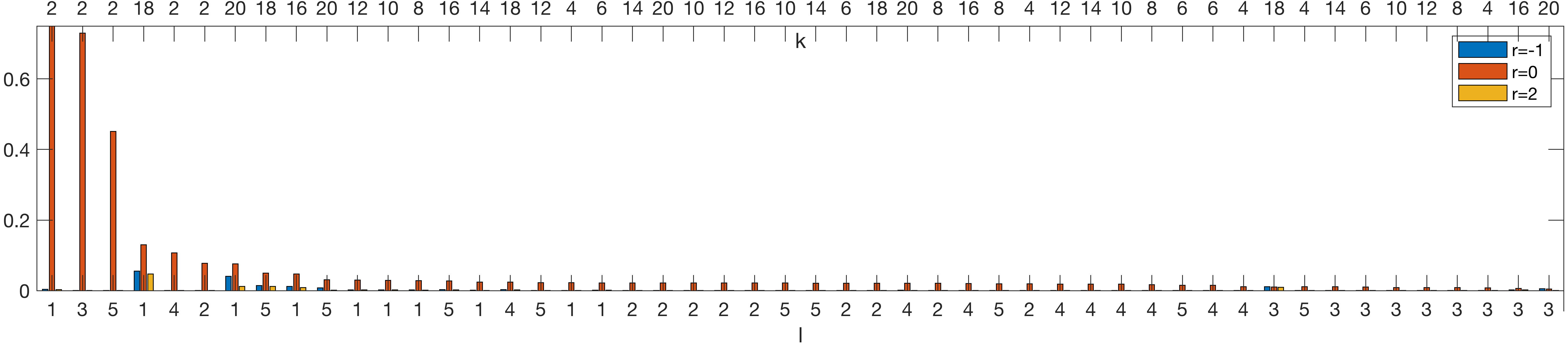}
\caption{Plot of optimal perturbation coefficients $\bar{u}_{\ell}$ ordered in decreasing magnitude and grouped by spatial mode $\psi_{kl}$ (cf.~\eqref{eq:pert-lib}). The corresponding $k$ and $l$ are labels on upper and lower $x$ axes, respectively. The plot is cut off at $y=0.75$ for visualization purposes. The first red bar has height~$7.6$.}
\label{fig:barplot}
\end{figure}
The amplitudes of all of the streamfunctions in the optimized solution are shown in Figure~\ref{fig:barplot}, ordered according to amplitude and grouped according to the spatial mode.
The streamfunctions of the spatial modes with the largest amplitudes in the optimal solution (first to fourth) are shown in Figure~\ref{fig:contours}.
Altogether, these modes combined to perturb the traveling double gyre towards a laminar horizontal flow (Figure~\ref{fig:final-fields} (c)).
\begin{figure}[hbt]
\begin{subfigure}{0.49\textwidth}
\includegraphics[width = 1\textwidth]{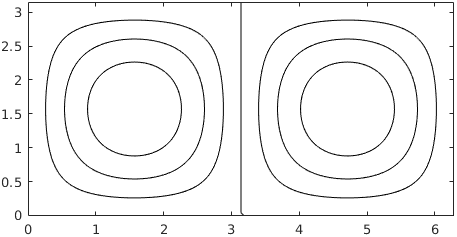}
\subcaption{Streamfunction $k=2$, $l=1$.}
\end{subfigure}
\hfill
\begin{subfigure}{0.49\textwidth}
\includegraphics[width = 1\textwidth]{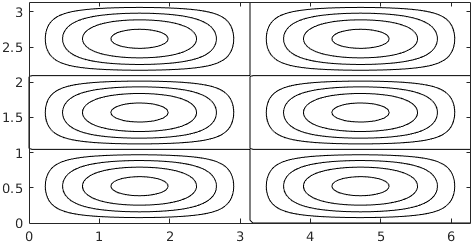}
\subcaption{Streamfunction $k=2$, $l=3$.}
\end{subfigure}\\
\begin{subfigure}{0.49\textwidth}
\includegraphics[width = 1\textwidth]{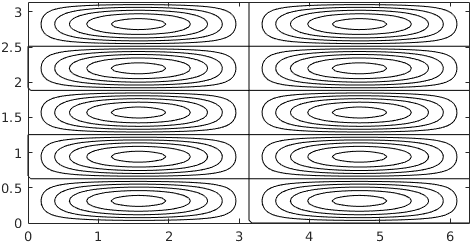}
\subcaption{Streamfunction $k=2$, $l=5$.}
\end{subfigure}
\hfill
\begin{subfigure}{0.49\textwidth}
\includegraphics[width = 1\textwidth]{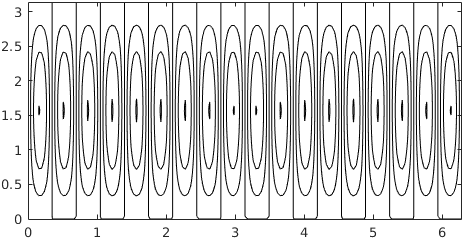}
\subcaption{Streamfunction $k=18$, $l=1$.}
\end{subfigure}
\caption{Streamfunctions of basis functions with highest amplitudes after the optimization.}
\label{fig:contours}
\end{figure}

\begin{figure}[hbt]
\begin{subfigure}{0.32\textwidth}
\includegraphics[width = 1\textwidth]{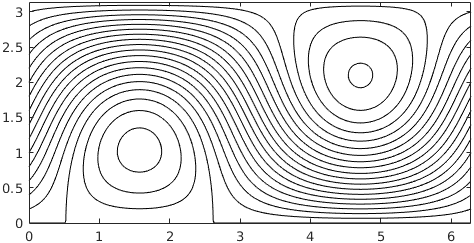}
\subcaption{Streamfunction of the original traveling double gyre.}
\end{subfigure}
\hfill
\begin{subfigure}{0.32\textwidth}
\includegraphics[width = 1\textwidth]{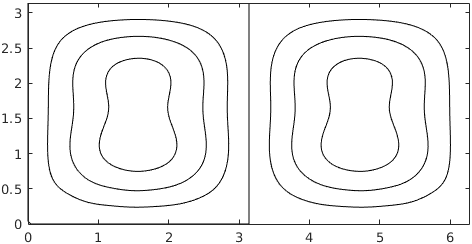}
\subcaption{Streamfunction of optimal perturbation~$u_{(35)}$.}
\end{subfigure}
\hfill
\begin{subfigure}{0.32\textwidth}
\includegraphics[width = 1\textwidth]{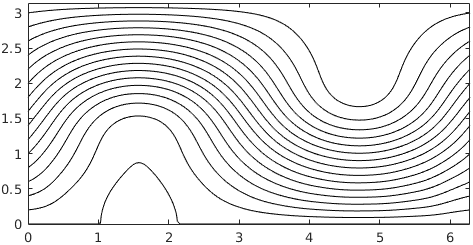}
\subcaption{Streamfunction of final velocity field $v+u$.}
\end{subfigure}
\caption{Streamfunctions of the original velocity field, optimal perturbation, and the perturbed velocity field at $t=0$.}
\label{fig:final-fields}
\end{figure}

We additionally extended the above computation to $100$ steps to investigate the asymptotic behavior under large perturbations. Around step $95$ the increase in coherence diminishes rapidly, as the value of the objective function $\|\bm{\hat{G}}(v+u)\bm \varphi\|^2$ approaches a plateau.
The corresponding velocity field approaches a purely laminar flow (results not shown here), as the optimal perturbation cancels the rotational part of the original velocity field.

\section{Conclusions}
\label{sec:conclusion}

Froyland and Koltai \cite{FK17} introduced a time-augmented construction to enable the efficient numerical construction of the infinitesimal generator of a periodically driven flow;  \cite{FK17} built on earlier work for steady flows \cite{FJK13}.
In the first part of this work, we extended these results to a finite-time flows with general aperiodic driving, by a novel ``reflected'' process.
Proposition \ref{prop:outflux} provided a formula for the accumulated outflow from a general time-dependent family of sets.
Proposition \ref{augmen} derived a spectral mapping theorem for our reflected process, connecting the spectrum of the time-augmented generator with the spectrum of the corresponding reflected evolution operator.
Using the sign structure of the time-augmented eigenfunctions of the augmented generator we built a family of coherent sets and Theorem \ref{thm:cohratio} lower bounded the coherence ratio of this family in terms of the corresponding eigenvalue.

In the second part of this work, we built on the optimization techniques of \cite{Froyland2016} to optimally enhance or destroy coherent features encoded in eigenfunctions.
We directly manipulated the underlying drift field, subject to energy constraints, and proved that the ``small perturbation'' problem has a unique optimum (Proposition \ref{lem:suff}).
Using Lagrange multipliers, we derived an explicit formula for the infinitesimal drift field perturbation;  this would be very difficult to achieve without using a generator framework.
In Section \ref{sec:opt_num} we implemented a multi-step gradient descent method, utilising the efficient time-augmented generator framework, and the explicit Lagrange multiplier solution, to optimize over relatively large energy budgets.

An advantage of our optimization approach is that the basis of perturbing velocity fields is fixed; thus their generators need only be computed once.
On the other hand, the generators are very large, sparse matrices, of which we need to compute the spectrum with the smallest real part. Hybrid spectral discretization techniques as in~\cite{FK17} or multilevel solvers can be a remedy to this.
Finally, the approach could be extended to the non-zero divergence case, and to open systems by considering homogeneous Dirichlet boundary conditions in the Fokker--Planck equation~\eqref{FP0}.


\section*{Acknowledgments}
We thank Andreas Denner for discussions leading to the inception of this work, 
and Nicolas Perkowski for suggestions concerning Theorem~\ref{thm:cohratio}.
GF thanks the DFG Priority Programme 1881 ``Turbulent Superstructures'' for generous travel support, and the Department of Mathematics at the Free University Berlin for hospitality.
GF is partially supported by an ARC Discovery Project. DP180101223.  MS is supported by the DFG Priority Programme 1881 ``Turbulent Superstructures'' and by the DFG CRC 1294. PK is partially supported by the DFG CRC 1114, project A01.

\appendix

\section{Non-autonomous Cauchy Problems}
\label{app:NACP}

In this appendix we summarize important results from the theory of non-auto\-nomous abstract Cauchy problems (NACPs) of the form:
\begin{equation}\label{eq:NACP}
	\text{(NACP)}\, \left\{ \begin{aligned}
	\frac{d}{dt} f(t) &= G(t) f(t)\\
	f(s) &= f_s,
	\end{aligned} \right.
\end{equation}
over some finite time interval $[s,T]$ in $L^p(X)$, for $1< p<\infty$. The linear operators $G(t)$ are so-called \emph{infinitesimal generators}, defined below.
We state the relevant assumptions for this section, which all have been made and used through out the paper:
\begin{itemize}
	\item $X\subset \mathbb{R}^d$ is an open bounded domain with piece-wise $C^4$ boundary.
	\item $v \in C^{(1,1)}([0,\tau]\times \overline{X};\mathbb{R}^d)$.
\end{itemize}
To see that our Fokker--Planck equations \eqref{FP0} and \eqref{FP2} fit in the setting of non-autonomous abstract Cauchy problems we will make use of the following notation:
\begin{equation*}
	t \mapsto f(t) := f(t,\cdot)  \in L^p((0,\tau);L^p(X))\quad \Leftrightarrow \quad (t,x) \mapsto f(t,x) \in L^p((0,\tau)\times X).
\end{equation*}
Equivalently, $C((0,\tau);L^p(X))$ will denote the space of functions mapping $(0,\tau)$ continuously to $L^p(X)$. This means that instead of considering the PDE pointwise in every $(t,x)$ the NACP considers $f$ as a mapping from $[0,\tau]$ into a the Banach space $L^p(X)$  this leads to a differential equation \eqref{eq:NACP} in $L^p(X)$.

Lemma~\ref{lem:NACP_cond} below establishes that our family of operators $(G(t))_{t\in[0,\tau]}$ considered over $L^p(X)$, $1<p<\infty$, defined by
\begin{equation}\label{eq:op-fam}
	G(t) f = -\text{div}_x(v(t,\cdot) f) - \frac{\varepsilon^2}{2} \Delta_x f \quad \text{ in } L^p(X) \text{ for } f \in W^{2,p}(X),
\end{equation}
for $\varepsilon >0$, satisfies the assumptions made in the references \cite{Tanabe1996} and \cite{Lunardi1995} using the domain $\smash{ \mathcal{D}(G(t)) = \mathcal{D}_p := \big\{ f \in W^{2,p}(X) \, \big\vert \, \frac{\partial f}{\partial n} = 0 \text{ on } \partial X \big\} }$ for all $t\in [0,\tau]$.
Theorem~\ref{thm:NACP} is concerned with the existence of unique solutions to \eqref{eq:NACP} and some regularity properties of the solution. Theorem \ref{thm:regularity} states further regularity results and Corollary \ref{cor:conti} is concerned with the regularity of eigenfunctions of $\bm{\hat{G}}$. The latter is needed in the proof of Theorem~\ref{thm:cohratio} in Appendix~\ref{app:cohratio_proof}.
\begin{remark}
\quad
	\begin{enumerate}
		\item The results in \cite{Tanabe1996} are formulated for the time interval $[0,T]$ but of course also hold for arbitrary time intervals $[a,b]$.
		\item We formulate all of the following on $[0,\tau]$. The reflected problem on $[\tau,2\tau]$ can be done analogously.
		\item The case $p=1$ can be treated as well with the theory of Tanabe~\cite{Tanabe1996}, cf.~\cite[Appendix]{FK17}, but is more complex and will be omitted here.
	\end{enumerate}
\end{remark}

\begin{lemma}\label{lem:NACP_cond}
	We consider the family $(G(t))_{t\in [0,\tau]}$ of unbounded linear operators over $L^p(X)$ with the domain $\mathcal{D}(G(t)) = \mathcal{D}_p$, defined by \eqref{eq:op-fam}. The following conditions are fulfilled:
	\begin{enumerate}[(1)]
		\item The spatial domain $X$ is a bounded open set of $\mathbb{R}^d$ of class $C^{4}$ \cite[p.279]{Tanabe1996} (locally) (and globally uniformly regular of class $C^2$ \cite[sec.5.2]{Tanabe1996}.
		\item The coefficients of the differential operator $G(\cdot)$ and the boundary operator $B(t,x)(\xi) := \langle n(x) , \xi \rangle$, with $n(x)$ being the unit outer normal at $x \in \partial X$ and $\xi \in \mathbb{R}^d$, are Hölder-continuous in $t$ \cite[sec.~6.13]{Tanabe1996}.
		\item The coefficients of $G(\cdot)$ are bounded and uniformly continuous in $x$ on $\overline{X}$ \cite[sec.~6.13]{Tanabe1996}.
		\item The differential operator $G(\cdot)$ and the boundary operator $B(t,x)(\xi) := \langle n(x) , \xi \rangle$, satisfy the complementing conditions \cite[p.131]{Tanabe1996} for all $t \in [0,\tau]$.
		\item The family $(G(t))_{t\in [0,\tau]}$ is uniformly strongly elliptic \cite[Def.5.4]{Tanabe1996} and $G(t)$ satisfies the root condition \cite[p.130]{Tanabe1996} for all $t\in [0,\tau]$.
		\item The conditions \cite[(P1), p.221]{Tanabe1996}, \cite[(P2), p.222]{Tanabe1996} and \cite[(P4), p.256]{Tanabe1996} are fulfilled.
	\end{enumerate}
\end{lemma}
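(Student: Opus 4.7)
The plan is to verify each of the six conditions separately, exploiting the fact that the principal part of $G(t)$ is the time-independent, uniformly elliptic operator $-\tfrac{\varepsilon^2}{2}\Delta$, while all time-dependence enters only through the first- and zeroth-order coefficients of the convective term, which we rewrite as $-\mathrm{div}_x(v(t,\cdot)f) = -v(t,\cdot)\cdot\nabla_x f - (\mathrm{div}_x v(t,\cdot))f$. Since $v \in C^{(1,1)}([0,\tau]\times\overline{X};\mathbb{R}^d)$ and $\overline{X}$ is compact, the coefficients $v$, $\nabla_x v$, and their time derivatives are uniformly bounded and uniformly continuous on $[0,\tau]\times\overline{X}$.

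Conditions (1)--(3) should be essentially immediate. Condition (1) is the standing assumption on $X$; if Tanabe requires globally $C^4$ boundary we invoke the assumption that $\partial X$ is piecewise $C^4$ (and a smoothing argument on flat pieces if needed). For (2), Hölder continuity in $t$ of the coefficients follows from $v\in C^{(1,1)}$, which gives Lipschitz-in-$t$ (hence Hölder of any exponent) on the compact interval $[0,\tau]$; the boundary operator $B(t,x)(\xi)=\langle n(x),\xi\rangle$ is independent of $t$, and $n(x)$ is Hölder on the piecewise smooth boundary. Condition (3) is a direct consequence of the $C^{(1,1)}$ regularity and compactness of $\overline{X}$.

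Conditions (4) and (5) depend only on the principal symbol and the leading-order boundary operator, which for every $t$ are the Laplacian and the normal derivative, respectively. I would cite the classical fact that the Neumann boundary condition complements the Laplacian (this is the standard textbook example of a Lopatinskii/Shapiro pair), and that the principal symbol $\tfrac{\varepsilon^2}{2}|\xi|^2$ satisfies the uniform strong ellipticity estimate with constant $\tfrac{\varepsilon^2}{2}$ independent of $(t,x)$. The root condition then reduces to showing that the roots $\lambda$ of $\tfrac{\varepsilon^2}{2}(|\xi'|^2+\lambda^2)=\mu$ with $\mathrm{Re}\,\mu\le 0$, $\xi'\ne 0$, split evenly in the upper/lower half-plane, which is routine.

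The main obstacle is condition (6), the verification of (P1), (P2), (P4) from Tanabe. For (P1) I would appeal to the standard result that for each fixed $t$, $G(t)$ with domain $\mathcal{D}_p$ generates an analytic semigroup on $L^p(X)$: the constant-coefficient principal part plus a lower-order perturbation whose coefficients are bounded gives sectoriality on $L^p$, uniformly in $t$ because the bounds on $v$ and $\nabla_x v$ are uniform. For (P2), Hölder continuity of $t\mapsto G(t)$ as a map into $\mathcal{L}(\mathcal{D}_p,L^p(X))$ again follows from the $C^{(1,1)}$ regularity of $v$, since on $\mathcal{D}_p\subset W^{2,p}$ the map $f\mapsto (G(t)-G(s))f$ has norm controlled by $\|v(t,\cdot)-v(s,\cdot)\|_{C^1}\lesssim |t-s|$. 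The most delicate piece is (P4), which requires that the domains of the fractional powers $(-G(t))^\alpha$ (for some $\alpha\in(0,1)$) coincide as subspaces of $L^p(X)$ independently of $t$. Here I would invoke the classical Seeley/Kato theory for uniformly elliptic operators with smooth coefficients and Neumann boundary conditions, according to which these fractional power domains are interpolation spaces between $L^p(X)$ and $\mathcal{D}_p$ and hence $t$-independent; the key input needed is uniform (in $t$) control of the resolvent kernels in a sector, which again follows from the uniform bounds of Lemma's hypotheses. This last step is where care must be taken, as verifying invariance of the fractional power domain is in general the most technical aspect of applying Tanabe's framework.
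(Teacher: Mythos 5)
Your proposal is correct and follows the paper's overall structure of checking the six conditions one at a time, exploiting that the principal part of $G(t)$ is the time-independent Laplacian. For conditions (1)--(5) your arguments are essentially those of the paper: on (1) the paper simply notes compatibility and remarks that the uniform-$C^2$ requirement is only relevant for unbounded domains (you reach for a piecewise/smoothing argument, which is unnecessary but harmless); on (2)--(3) both proofs read these off the $C^{(1,1)}$ regularity of $v$ and compactness of $\overline{X}$; on (4) the paper does the small explicit root computation $r_{1,2}=\pm i$ and checks $B(t,x)(\xi + r n(x)) = r$ rather than citing the Lopatinskii--Shapiro pair, and on (5) it invokes Tanabe's Theorem~5.4 that strong ellipticity implies the root condition rather than rederiving it, but these are the same observations dressed differently.

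The one place where you take a genuinely different route is condition (6). The paper disposes of (P1), (P2), (P4) in a single citation to Tanabe, Section~6.13, where these are verified for advection--diffusion operators with Neumann data. You instead sketch a first-principles verification: sectoriality on $L^p$ for (P1), Lipschitz-in-$t$ of $t\mapsto G(t)\in\mathcal{L}(\mathcal{D}_p, L^p)$ for (P2), and $t$-independence of the fractional power domains via Seeley/Kato interpolation for (P4). This is more self-contained and makes explicit where the analytical weight sits (the fractional power domains), which you correctly identify as the technical heart of Tanabe's framework; the cost is that a fully rigorous version of your (P4) argument would need the uniform sectorial resolvent estimates that Tanabe's citation packages for you. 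Both routes are valid; the paper's is shorter because it outsources exactly the step you flag as delicate.
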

Before we continue with the proof, let us remark, that most of the proof has already been done in \cite{Tanabe1996} and some conditions ((3),(4) and (5)) become trivial in our case because the highest order part (called principal part in \cite{Tanabe1996}) of $G(t)$ is the time-independent Laplace operator $\Delta$.
\begin{proof}
One important aspect of this lemma is to establish that our choice $\mathcal{D}_p \subset W^{2,p}(X)$ for the domain $\mathcal{D}(G(t))$ is appropriate.\\
Most of the statements above are proven at some point in \cite{Tanabe1996}.
\begin{enumerate}[(1)]
	\item The regularity required is compatible with our general assumption on the regularity of $X$. The condition for uniformly $C^2$ in \cite{Tanabe1996} is irrelevant for our context as it is only important for unbounded domains.
	\item $B(t,x)$ is constant in $t$ and therefore H\"older continuous. Our assumptions guarantee that $v$ and $\partial_x v$ are H\"older continuous in $t$.
	\item This is satisfied by the assumptions we make on $v$.
	\item This can be verified by straightforward calculations. The important thing to note is that the principal part (called $L^0$ in \cite{Tanabe1996}) of $G(t)$ is time-independently $\frac{\varepsilon^2}{2}\Delta =:L^0$, which implies the roots $r_1 = i, r_2 = -i$ of $L^0(x,\xi+r n(x))$ for $\xi$ perpendicular to $n(x)$ with unit norm, and that $B(t,x) (\xi + r n(x)) = r$ for $\xi$ being perpendicular to $n(x)$.
	\item It is well known that the operator family $(G(t))_{t\in[0,\tau]}$, defined by \eqref{eq:op-fam}, is uniformly elliptic in our setting. Now \cite[Thm.~5.4]{Tanabe1996} states that every strongly elliptic operator satisfies the Root Condition.
	\item \cite[sec.~6.13]{Tanabe1996} shows that the assumptions (P1), (P2) and (P4) are satisfied for advection-diffusion type operators as in our setting.
\end{enumerate}
\end{proof}

\begin{theorem}\label{thm:NACP}
	The NACP \eqref{eq:NACP} with the operators defined by \eqref{eq:op-fam} and the assumptions stated in this paper has a unique solution
	\[
	f \in C([0,\tau];L^p(X)) \ \cap\ C^1((0,\tau];L^p(X))
	\]
	given by $f(t) = \mathcal{P}_{s,t} f(s)$, and that $ f(t) \in \mathcal{D}(G(t))$ for $t\in (0,\tau]$. Further, the family $(\mathcal{P}_{s,t})_{t\geq s}$ is a family of linear, bounded and even compact operators (for $t>s$) on $L^p(X)$.
\end{theorem}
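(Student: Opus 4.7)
The plan is to invoke the general theory of parabolic non-autonomous Cauchy problems on $L^p(X)$ as developed by Tanabe~\cite{Tanabe1996} and Lunardi~\cite{Lunardi1995}. The crucial preparatory step, namely the verification of the structural hypotheses (uniform strong ellipticity, H\"older continuity in $t$ and uniform continuity in $x$ of the coefficients, the complementing and root conditions for the Neumann boundary operator, and conditions (P1), (P2), (P4)), has already been carried out in Lemma~\ref{lem:NACP_cond}. Under these hypotheses, the construction in~\cite[Sections 5.2 and 6.13]{Tanabe1996} produces an evolution family $(\mathcal{P}_{s,t})_{0 \le s \le t \le \tau}$ of bounded linear operators on $L^p(X)$ that satisfies $\mathcal{P}_{s,s}=I$ and the cocycle property $\mathcal{P}_{s,t}\mathcal{P}_{r,s}=\mathcal{P}_{r,t}$, and such that for every $f_s \in L^p(X)$ the function $f(t) := \mathcal{P}_{s,t} f_s$ is continuous on $[s,\tau]$, continuously differentiable on $(s,\tau]$ with values in $L^p(X)$, takes values in $\mathcal{D}(G(t)) = \mathcal{D}_p$ for $t>s$, and solves $f'(t)=G(t)f(t)$ in $L^p(X)$. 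This will deliver existence together with the asserted regularity.

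Uniqueness will be obtained within the same framework: if $f_1,f_2$ are two solutions sharing the initial datum, then $w := f_1-f_2$ solves~\eqref{eq:NACP} with $w(s)=0$. The analyticity of each pointwise semigroup $e^{rG(t_0)}$, inherited from uniform ellipticity, together with a Gronwall argument applied to the mild-solution representation of $w$ through Tanabe's fundamental solution $U(t,s)$, forces $w\equiv 0$. An alternative route, convenient for $p=2$ under the divergence-free assumption on $v$, is an energy estimate of the form $\tfrac{d}{dt}\|w(t)\|_{L^2}^2 \le C\|w(t)\|_{L^2}^2$ followed by Gronwall; the general $p$ case then follows by density and duality.

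For the compactness of $\mathcal{P}_{s,t}$ when $t>s$, the crucial ingredient is the parabolic smoothing bound
\[
\|\mathcal{P}_{s,t}f_s\|_{W^{2,p}(X)} \le C\,(t-s)^{-1}\|f_s\|_{L^p(X)}, \qquad 0 \le s < t \le \tau,
\]
which is a standard consequence of Tanabe's construction (see also~\cite[Ch.~6]{Lunardi1995}) and reflects the instantaneous smoothing effect of the parabolic flow. Consequently $\mathcal{P}_{s,t}$ maps bounded subsets of $L^p(X)$ into bounded subsets of $W^{2,p}(X)$. Since $X$ is bounded with sufficiently smooth boundary, the Rellich--Kondrachov theorem provides a compact embedding $W^{2,p}(X)\hookrightarrow L^p(X)$, and compactness of $\mathcal{P}_{s,t}$ as an operator on $L^p(X)$ follows.

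The main obstacle in turning this plan into a self-contained argument is establishing the smoothing estimate directly in the non-autonomous setting: in the autonomous case it is immediate from analyticity of the semigroup generated by a single elliptic operator, whereas here one must propagate analyticity through Tanabe's fundamental-solution construction. Fortunately, exactly the required bounds are catalogued in~\cite{Tanabe1996} under the conditions already checked in Lemma~\ref{lem:NACP_cond}, so the remaining work consists in assembling references rather than performing new calculations.
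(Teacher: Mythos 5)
Your proposal follows essentially the same route as the paper's proof: both rely on Lemma~\ref{lem:NACP_cond} to verify Tanabe's hypotheses, cite Tanabe's fundamental-solution/evolution-family construction (\cite[Thms.~6.5--6.6]{Tanabe1996}) for existence, uniqueness, regularity, and membership in $\mathcal{D}_p$, and obtain compactness of $\mathcal{P}_{s,t}$ via boundedness $L^p(X)\to W^{2,p}(X)$ combined with the Rellich--Kondrachov embedding. The only cosmetic differences are that you supply a redundant Gronwall/energy-estimate argument for uniqueness (already delivered by Tanabe's framework) and that you state the parabolic smoothing bound $\|\mathcal{P}_{s,t}f_s\|_{W^{2,p}}\le C(t-s)^{-1}\|f_s\|_{L^p}$ explicitly, which makes the boundedness into $W^{2,p}(X)$ cleaner than the paper's terser phrasing.
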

\begin{proof}
	Lemma~\ref{lem:NACP_cond} ensures that we can apply the results from \cite{Tanabe1996} that we will use in the following.
	The results from \cite[sec.~6.13]{Tanabe1996} that use more abstract results from Acquistapace and Terreni (\cite{AT86} and \cite{AT87}) give the existence of a unique solution $f$ to \eqref{eq:NACP} \cite[Thm.~6.6]{Tanabe1996} and a corresponding two parameter family of solution operators (called fundamental solution in \cite{Tanabe1996}) $(\mathcal{P}_{s,t})$.
	\begin{itemize}
		\item The function $f$ is a classical solution \cite[Def.6.1]{Tanabe1996}, i.e.
		\begin{equation*}
			f \in C([0,\tau];L^p(X)) \cap C^1((0,\tau];L^p(X))
		\end{equation*}
		and $f(t) \in \mathcal{D}(G(t)) = \mathcal{D}_p$ for $t>0$ follows directly from \cite[Thm.~6.6]{Tanabe1996}.
		\item For all $t>s$ we the solution can be expressed as $f(t)=\mathcal{P}_{s,t}f(s)$ with the two parameter solution family $(\mathcal{P}_{s,t})_{t\geq s}$. This follows from \cite[Thm.~6.5]{Tanabe1996}.
		\item For every $t>s$ the operator $\mathcal{P}_{s,t}$ is compact on $L^p(X)$. The result \cite[Thm.~6.6]{Tanabe1996}, i.e., $\mathcal{P}_{s,t} f(s) = f(t) \in \mathcal{D}_p$ for $f(s) \in L^p(X)$), gives that for $t>s$ $\mathcal{P}_{s,t} \, : \, L^p(X) \rightarrow \mathcal{D}_p \subset W^{2,p}(X)$ is a bounded linear operator from $L^p(X)$ to $W^{2,p}(X)$. Now the Rellich--Kondrachov embedding theorem \cite[Thm.~6.3]{AF2003} states that $W^{2,p}(X)$ is compactly embedded in $L^p(X)$. Thus $\mathcal{P}_{s,t}$ is a compact operator from $L^p(X)$ to $L^p(X)$, because it maps bounded sets in $L^p$ to bounded sets in $W^{2,p}(X)$ which are relatively compact sets in $L^p(X)$.
	\end{itemize}
\end{proof}

Due to the common domain of all $G(t)$ we get a better regularity for the the solution if the initial condition is more regular. More precisely, we use the result \cite[Cor.~6.1.9 (iv)]{Lunardi1995} to prove the following.

\begin{theorem}\label{thm:regularity}
\quad
	\begin{enumerate}[(i)]
		\item It holds that $t \mapsto f(t) = \mathcal{P}_{0,t} f_0 \in C([0,\tau];\mathcal{D}_p)) \cap C^1([0,\tau];X)$ if and only if $f_0 \in \mathcal{D}_p$ and $G(0) f_0 \in L^p(X)$.
		\item The regularity $f \in C([0,\tau]; W^{m,p}(X))$ implies H\"older continuity\footnote{$C^{\alpha}(\overline{X})$ denotes the space of conitinous functions that are H\"older continuous on $\overline{X}$ with exponent $\alpha \in (0,1)$.} in space for every time slice, i.e., $f(t,\cdot) \in C^{\alpha}(\overline{X})$ for all $p$ such that $0<\alpha \leq m - \frac{d}{p}$, with uniformly bounded H\"older-norm in $t$.
		\item Further, $f \in C([0,\tau];W^{2,p}(X))$ implies $f \in C([0,\tau]\times X)$.
	\end{enumerate}
\end{theorem}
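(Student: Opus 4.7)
The plan is to treat the three parts separately: (i) is an application of Lunardi's non-autonomous parabolic theory, while (ii) and (iii) follow from Sobolev/Morrey embedding combined with the continuity in time that is already in hand.

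For (i), I would invoke \cite[Cor.~6.1.9(iv)]{Lunardi1995} directly. That corollary applies to parabolic evolution problems in which the domain $\mathcal{D}(G(t))$ is independent of $t$ and the operator family depends Hölder-continuously on $t$ in an appropriate sense; under these assumptions it characterises precisely the initial data for which the classical solution extends, in the graph norm, continuously down to $t=0$ and for which $\frac{d}{dt}f(t)$ is continuous up to $t=0$. Lemma~\ref{lem:NACP_cond}, together with the choice $\mathcal{D}(G(t))=\mathcal{D}_p$ for all $t$, supplies exactly these hypotheses, and the resulting characterisation reduces to the requirement $f_0 \in \mathcal{D}_p$ (the condition $G(0)f_0 \in L^p(X)$ is then automatic, since $G(0)\colon \mathcal{D}_p \to L^p(X)$). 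The converse implication is immediate from evaluating at $t=0$.

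For (ii), I would use the Sobolev embedding $W^{m,p}(X) \hookrightarrow C^{\alpha}(\overline{X})$ valid for $0<\alpha \le m-d/p$, as in \cite[Thm.~4.12]{AF2003}; the required boundary regularity is guaranteed by our piecewise $C^4$ assumption on $\partial X$. For each fixed $t$ this yields
\[
\|f(t,\cdot)\|_{C^{\alpha}(\overline{X})} \le C\,\|f(t,\cdot)\|_{W^{m,p}(X)}.
\]
Continuity of $t\mapsto f(t,\cdot)\in W^{m,p}(X)$ on the compact interval $[0,\tau]$ gives $\sup_{t\in[0,\tau]} \|f(t,\cdot)\|_{W^{m,p}} < \infty$, and composing with the embedding yields the claimed uniform-in-$t$ Hölder bound.

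For (iii), I would apply part (ii) with $m=2$ at some $\alpha\in(0,2-d/p)$, which requires implicitly that $p>d/2$ so that this range is non-empty. Then each time slice $f(t,\cdot)$ lies in $C^{\alpha}(\overline X)$ with Hölder norm uniformly bounded in $t$, while composing $t\mapsto f(t,\cdot)\in W^{2,p}(X)$ with the continuous embedding $W^{2,p}(X)\hookrightarrow C(\overline X)$ shows that $t\mapsto f(t,\cdot)$ is continuous as a $C(\overline X)$-valued function. Joint continuity on $[0,\tau]\times \overline{X}$ then follows from the triangle inequality
\[
|f(t,x)-f(s,y)| \le |f(t,x)-f(t,y)| + |f(t,y)-f(s,y)|,
\]
where the first term is controlled by the uniform spatial Hölder bound and the second by continuity in $t$ in the uniform norm. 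The main (and only) obstacle is in (i), where one must match our setting to the exact hypotheses of \cite[Cor.~6.1.9(iv)]{Lunardi1995}; however, the preparatory work carried out in Lemma~\ref{lem:NACP_cond} essentially already certifies those hypotheses, so this step reduces to a citation. Parts (ii) and (iii) are routine functional-analytic consequences of Sobolev embedding and do not require delicate estimates.
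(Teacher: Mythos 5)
Your proposal is correct and follows essentially the same route as the paper: part (i) is the same citation to \cite[Cor.~6.1.9(iv)]{Lunardi1995} relying on the constant domain established in Lemma~\ref{lem:NACP_cond}; part (ii) uses the same Sobolev/Morrey embedding from \cite[Thm.~4.12]{AF2003}; and part (iii) combines the $C([0,\tau];C(\overline X))$ regularity with a triangle-inequality decomposition equivalent to the one in the paper (the paper splits as $\|f(t_n,\cdot)-f(t,\cdot)\|_\infty + |f(t,x_n)-f(t,x)|$, you split as $|f(t,x)-f(t,y)|+|f(t,y)-f(s,y)|$, which amounts to the same estimate). Your parenthetical remark in (i) that $G(0)f_0\in L^p(X)$ is automatic once $f_0\in\mathcal{D}_p$ is a reasonable observation that the paper does not make, but it does not alter the argument.
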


\begin{proof}
\quad
	\begin{enumerate}[(i)]
		\item This result is basically \cite[Cor.~6.1.9 (iv)]{Lunardi1995} with $\mathcal{D}_p = \mathcal{D}(G(t))$ being the common domain of the family of unbounded operators $(G(t))_{t\in[0,\tau]}$ in the Banach space $L^p(X)$ ($\mathcal{D}_p$ is dense in $L^p(X)$), and the resulting two parameter evolution family is $(\mathcal{P}_{s,t})_{t\geq s}$.
		\item For every $t \in [0,\tau]$ we have $f(t,\cdot ) \in W^{m,p}(X)$. From the appropriate Sobolev embedding theorem \cite[Thm.~4.12]{AF2003} (also cf. Morrey) follows that for domains as smooth as our $X\subset \mathbb{R}^n$ the space $W^{m,p}(X)$ is continuously embedded in $C^{\alpha}(\overline{X})$ for $0<\alpha \leq m-\frac{d}{p}$, i.e., there exists a $K>0$ such that
		\begin{equation}\label{eq:cont_emb}
			\| g \|_{C^{\alpha}} \leq K \| g \|_{W^{2,p}}
		\end{equation}
		for all $g \in W^{2,p}(X)$. This with the regularity $f \in C([0,\tau]; W^{m,p}(X))$ further implies that the constant for the continuous embedding for $(f(t))_{t\in [0,\tau]}$ into $C^{\alpha}$ can be taken as independent of~$t$.
		\item We consider a function $f \in C([0,\tau];W^{2,p}(X))$. Now (ii) gives $f(t,\cdot) \in C^{\alpha}(X))$ for every $t$ with $0<\alpha \leq 2 - \frac{d}{p}$. Together with (i) this implies
		\begin{equation*}
			\sup_{t\in[0,\tau]} \| f(t,\cdot) \|_{C^{\alpha}} \leq K \sup_{t\in[0,\tau]} \| f(t,\cdot) \|_{W^{2,p}} \leq M < \infty
		\end{equation*}
		This immediately implies $f \in C([0,\tau];C(\overline{X}))$ which is equivalent to $f \in C([0,\tau ]\times \overline{X})$. Let us briefly show the direction of the last statement that we need: For $f \in C([0,\tau];C(\overline{X}))$ we have
		\begin{equation*}
			|f(t_n,x_n) - f(t,x)| \leq \| f(t_n, \cdot) - f(t,\cdot) \|_{\infty} + |f(t,x_n) - f(t,x)| \xrightarrow{n\rightarrow \infty} 0 \; .
		\end{equation*}
	\end{enumerate}
\end{proof}

\begin{corollary}\label{cor:conti}
	Let $\bm{f}$ be an eigenfunction of $\bm{\hat{G}}$ considered on $L^p$, $p> \frac{d}{2}$, then $\bm{f} \in C([0,2\tau] \times X)$.
\end{corollary}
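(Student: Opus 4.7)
The plan is to bootstrap from the abstract $L^p$-framework into classical continuity via Proposition~\ref{augmen} and Theorem~\ref{thm:regularity}. The key observation is that an eigenfunction $\bm{f}$ of $\bm{\hat{G}}$, evaluated along the fiber $\theta=0$, is also an eigenfunction of $\hat{\mathcal{P}}_{0,2\tau}$, and the smoothing of the evolution family immediately pushes $\bm{f}(0,\cdot)$ into $\mathcal{D}_p$.

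First I would record the scalar identity obtained by inserting $s=0$, $t=2\tau$ into Proposition~\ref{augmen} and using the periodicity in $\theta$ encoded in $\mathcal{D}(\bm{\hat G})$ (i.e.\ $\bm{f}(2\tau,\cdot)=\bm{f}(0,\cdot)$):
\begin{equation*}
\hat{\mathcal{P}}_{0,2\tau}\bm{f}(0,\cdot) = e^{2\tau\mu}\bm{f}(0,\cdot).
\end{equation*}
Since $\mu<0$ (hence $e^{2\tau\mu}\neq 0$), Theorem~\ref{thm:NACP} applied to the concatenated reflected Fokker--Planck problem says that $\hat{\mathcal{P}}_{0,2\tau}$ maps $L^p(X)$ into $\mathcal{D}_p\subset W^{2,p}(X)$, so $\bm{f}(0,\cdot)\in\mathcal{D}_p$. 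In particular $G(0)\bm{f}(0,\cdot)\in L^p(X)$, so Theorem~\ref{thm:regularity}(i) applies with initial datum $\bm{f}(0,\cdot)$.

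Next I would propagate this regularity across the full time interval in two pieces, mirroring the concatenated definition of $\hat{v}$. On $[0,\tau]$ the forward Fokker--Planck problem \eqref{FP0} with initial datum $\bm{f}(0,\cdot)\in\mathcal{D}_p$ gives, by Proposition~\ref{augmen}, $\bm{f}(t,\cdot)=e^{-\mu t}\hat{\mathcal{P}}_{0,t}\bm{f}(0,\cdot)$, and Theorem~\ref{thm:regularity}(i) yields
\begin{equation*}
t\mapsto \bm{f}(t,\cdot)\in C\bigl([0,\tau];\mathcal{D}_p\bigr)\subset C\bigl([0,\tau];W^{2,p}(X)\bigr).
\end{equation*}
In particular $\bm{f}(\tau,\cdot)\in\mathcal{D}_p$. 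Repeating the same argument on $[\tau,2\tau]$ for the reflected, shifted problem \eqref{FPBshift}, using the evolution family $\hat{\mathcal{P}}_{\tau,t}$, gives $\bm{f}\in C([\tau,2\tau];W^{2,p}(X))$. The two pieces agree at $t=\tau$ because the $L^2$-continuous concatenation built into $\mathcal{D}(\bm{\hat{G}})$ (noted in the remark after \eqref{augG}) forces $\bm{f}(\tau^-,\cdot)=\bm{f}(\tau^+,\cdot)$ in $L^p$, and both one-sided limits already lie in $W^{2,p}(X)$.

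Finally I would invoke Theorem~\ref{thm:regularity}(iii) separately on $[0,\tau]$ and on $[\tau,2\tau]$ to upgrade $C([\,\cdot\,];W^{2,p}(X))$ to joint continuity $C([\,\cdot\,]\times \overline{X})$; matching at $t=\tau$ then yields $\bm{f}\in C([0,2\tau]\times \overline{X})$. The hypothesis $p>d/2$ enters precisely in this last step through the Sobolev embedding $W^{2,p}(X)\hookrightarrow C^\alpha(\overline{X})$ with $\alpha\le 2-d/p>0$, which is the content of Theorem~\ref{thm:regularity}(ii)--(iii). The main obstacle I expect is a purely bookkeeping one: verifying that the $\mathcal{D}(\bm{\hat{G}})$-membership of $\bm{f}$ really does give the matching at $t=\tau$ (and at the wrap-around $t=0\sim 2\tau$) with the regularity needed for Theorem~\ref{thm:regularity}(i) to apply on each half-interval; everything else is a direct combination of Proposition~\ref{augmen} with the results collected in Appendix~\ref{app:NACP}.
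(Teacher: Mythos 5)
Your proposal is correct and follows essentially the same route as the paper's proof: use Proposition~\ref{augmen} at $t=2\tau$ together with the smoothing of Theorem~\ref{thm:NACP} to conclude $\bm{f}(0,\cdot)\in\mathcal{D}_p$, then invoke Theorem~\ref{thm:regularity}(i) and (iii); you simply spell out the piecewise application on $[0,\tau]$ and $[\tau,2\tau]$ which the paper leaves implicit (justified by the remark in Appendix~\ref{app:NACP} that the reflected problem is treated analogously). One small nit: you invoke ``$\mu<0$'' to justify $e^{2\tau\mu}\neq 0$, but $\mu<0$ is not a hypothesis of the corollary---the exponential is nonzero for any $\mu\in\mathbb{C}$, which is all you need.
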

\begin{proof}
	For an eigenfunction $\bm{f}$ follows
\begin{equation*}
	\hat{\mathcal{P}}_{0,t} \bm{f}(0,\cdot) = e^{\mu t} \bm{f}(t \! \!\!\! \mod 2\tau, \cdot)
\end{equation*}
from theorem \ref{augmen}. Further theorem \ref{thm:NACP} implies $e^{-\mu \tau}\hat{\mathcal{P}}_{0,2\tau} \bm{f} (0,\cdot) = \bm{f}(0,\cdot) \in \mathcal{D}(\hat{G}(2\tau))$. Now theorem \ref{thm:regularity} (i) and (iii) with $\mathcal{D}(\hat{G}(2\tau)) = \mathcal{D}_p$ gives the claim.
\end{proof}

\section{Proof of Theorem~\ref{thm:cohratio} and Proposition~\ref{thm:cohratio2}}
\label{app:cohratio_proof}
\begin{proof}[Proof of Theorem~\ref{thm:cohratio}]
The proof strongly follows the  lines of those of~\cite[Theorems 16 and 19]{FK17}, which, in turn, borrows ideas from~\cite{FrSt10,FrSt13}.
It consists of two main steps. First, we consider the events
\[
\mathcal{E}_n := \left\{\omega\,\vert\, x_{r_i}(\omega)\in A_{r_i}^+,\ \forall\,i=1,\ldots,n\right\}\,
\]
for a dense sequence of times, and show that $\mathcal{E}_n\downarrow\mathcal{E}:=\bigcap_{r\in[0,\tau]} \{\omega\,\vert\, x_r(\omega)\in A_r^+\}$. Second, we use this approximation to bound the retention probability in the family, hence the coherence ratio.

\emph{Step 1.}
Let $(r_i)_{i\in\mathbb{N}}$ be a dense sequence in $[0,\tau]$ such that $r_1 = \tau$; this latter condition is needed such that the decomposition in~\eqref{eq:eventsplit} below is always possible.
The events $\mathcal{E}_n $ are clearly measurable. Since the paths $t\mapsto x_t(\omega)$ of the process \eqref{dynsystem} are continuous, so is $t\mapsto \bm{f}(t,x_t(\omega)) =: F_t(\omega)$. By corollary \ref{cor:conti} we have that $\bm{f} \in C([0,2\tau] \times \overline{X})$.
To see $\mathcal{E}_n\downarrow\mathcal{E}$, note that by the continuity of $t\mapsto F_t$ it holds
\[
0 \le F_t(\omega) \ \forall t\in[0,\tau]
\quad \Leftrightarrow \quad
0 \le \inf_{t\in[0,\tau]} F_t(\omega)
\quad \Leftrightarrow \quad
0 \le \inf_{i \in \mathbb{N}} F_{r_i}(\omega).
\]

For a finite measure $\pi$ let $\mathbb{P}_{\pi} := \mathbb{P}(\cdot\,\vert\, x_0\sim \pi)$ denote the law of the process \eqref{dynsystem} with initial distributions $\pi$. If $\pi$ is not a probability measure, then we denote $\mathbb{P}_{\pi} = \pi(\Omega) \mathbb{P}_{\pi / \pi(\Omega)}$, where $\Omega$ is the entire event space. If, additionally, $\pi$ is a signed measure, then  $\mathbb{P}_{\nu} := \mathbb{P}_{\pi^+} - \mathbb{P}_{\pi^-}$,
where $\pi^+$ and $\pi^-$ denote the positive and negative parts of the signed measure $\pi$, respectively, in the sense of the Hahn decomposition, i.e. $\pi = \pi^+ - \pi^-$.
Now, the $\sigma$-additivity of $\mathbb{P}$ yields also $\mathbb{P}_{\pi}(\mathcal{E}_n) \to \mathbb{P}_{\pi}(\mathcal{E})$ as $n\to\infty$.

\emph{Step 2.}
By Proposition~\ref{augmen} we have that $\bm{f}(0,\cdot)$ is an eigenfunction of the integral preserving operator~$\mathcal{P}_{0,\tau}^*\mathcal{P}_{0,\tau}$ at the eigenvalue~$e^{2\mu\tau}<1$. Thus we have $\int_X \bm{f}(0,\cdot)\,dm = 0$, and with it $\int_X \bm{f}(t,\cdot)\,dm = e^{-\mu t}\int_X \mathcal{P}_{0,t}\bm{f}(0,\cdot)\,dm = 0$ again by the integral-preserving property.
We define the signed measure $\nu$ via $d\nu(x) = \bm{f}(0,x)\,dm(x)$. With the given scaling of~$\bm{f}$ we have that~$\int_{A_{\tau}^{\pm}}\bm{f}(\tau,\cdot)\,dm = \pm 1$. By Proposition~\ref{augmen} we have that~$\mathcal{P}_{0,t}\bm{f}(0,\cdot) = e^{\mu t}\bm{f}(t,\cdot)$, thus for every $n\in\mathbb{N}$ we have
\begin{eqnarray}
e^{\mu \tau} & = & e^{\mu \tau}\int_{A_{\tau}^+} \bm{f}(\tau,\cdot) dm\nonumber\\
& = & \int_{A_{\tau}^+} \mathcal{P}_{0,\tau} \bm{f}(0,\cdot) dm \nonumber\\
& = & \mathbb{P}_{\nu}(x_{\tau}\in A_{\tau}^+) \nonumber\\
& = & \mathbb{P}_{\nu}(x_{r_i}\in A_{r_i}^+,\ i=1,\ldots,n) + \sum_{j=2}^n \underbrace{\mathbb{P}_{\nu}(x_{r_j}\notin A_{r_i}^+,\ x_{r_i}\in A_{r_i}^+,\ \forall\ r_i>r_j)}_{=: p_j}. \label{eq:eventsplit}
\end{eqnarray}
The last equality follows from the decomposition of the event~$\{x_{r_1} \in A_{\tau}^+\} = \{x_{\tau} \in A_{\tau}^+\}$ into disjoint events $\{x_{r_i}\in A_{r_i}^+ \text{ for all }r_i>r_j, \text{ but }x_{r_j}\notin A_{r_j}^+\}$, $j=2,\ldots,n$, and $\{x_{r_i}\in A_{r_i}^+ \text{ for all }i=1,\ldots,n\}$. One can see~\cite{FJK13} that $p_j\le 0$, because the set of initial conditions $x_{r_j}\notin A_{r_j}^+$ is contained in the non-positive support of~$\nu$. It follows that
\begin{equation}
e^{\mu \tau}\le \mathbb{P}_{\nu}(x_{r_i}\in A_{r_i}^+,\ i=1,\ldots,n) = \mathbb{P}_{\nu}(\mathcal{E}_n).
\label{eq:eventestim}
\end{equation}
Thus, by step 1,
\[
e^{\mu \tau} \le \lim_{n\to\infty} \mathbb{P}_{\nu}(\mathcal{E}_n) = \mathbb{P}_{\nu}(\mathcal{E}) = \mathbb{P}_{\nu}\left(\cap_{t\in[0,\tau]}\{x_t\in A_t^+\}\right)\,.
\]
The same bound can be obtained for the family $\{A_t^-\}_{t\in [0,\tau]}$.
%
Noting that~$\mathbb{P}_{\nu^{\pm}} \le \|\bm{f}(0,\cdot)\|_{L^{\infty}} \mathbb{P}_{|X| m}$, the claim follows, since $|A_0^\pm| = |X|\, m(A_0^\pm)$.
\end{proof}

\begin{proof}[Proof of Proposition~\ref{thm:cohratio2}]
The proof is entirely analogous to that of Theorem~\ref{thm:cohratio}, except that the system of equations containing \eqref{eq:eventsplit} is altered. The deviating part is the system of inequalities,  which follows by the assumptions of the proposition:
\begin{equation}
\label{eq:LinCombEfun}
\begin{aligned}
e^{\mu_k \tau}\int_{A^+_{\tau}} \bm{f}(\tau,\cdot)\,dm &=  \sum_{i=1}^k  e^{\mu_k \tau} \alpha_i \int_{A^+_{\tau}} \bm{f}_i(\tau,\cdot)\,dm \\
&\le \sum_{i=1}^k e^{\mu_i \tau} \alpha_i \int_{A^+_{\tau}} \bm{f}_i(\tau,\cdot)\,dm\\
&= \int_{A^+_{\tau}} \sum_{i=1}^k \alpha_i \big( e^{\mu_i \tau} \bm{f}_i(\tau,\cdot)\big) \,dm\\
&{\stackrel{\eqref{eq:eigenfunction}}{=}} \int_{A^+_{\tau}} \sum_{i=1}^k \alpha_i \mathcal{P}_{0,\tau} \bm{f}_i(0,\cdot) \,dm\\
&=  \int_{A^+_{\tau}} \mathcal{P}_{0,\tau} \bm{f}(0,\cdot) \,dm\,.
\end{aligned}
\end{equation}
\phantom{X}
\end{proof}

\section{Bilinear form}

The following theorem is also mentioned in \cite[Appendix]{McIntosh1968} but only partially proven there. We prove the part that is important for this work.
\begin{theorem}\label{thm:bil_form}
	For a bounded bilinear form $B$ acting on a Hilbert-Space $(H,\langle \cdot, \cdot \rangle_H , \| \cdot \|_H)$,
\begin{equation*}
	B \, :\, H \times H \rightarrow \mathbb{R} \qquad \exists \gamma >0 \; : \; B(u,v) \leq \gamma \| u \|_H \|v\|_H  \qquad \forall u,v \in H
\end{equation*}
	there exists a linear, bounded operator $T \, : \, H \rightarrow H$ such that
\begin{equation*}
	B(u,v) = \langle u,Tv\rangle_H \qquad \forall u,v \in H \; .
\end{equation*}
If $B$ is symmetric, then $T$ is self-adjoint. Further if $B$ is additionally positive definite
\begin{equation*}
\exists \beta >0  \; : \;\beta \| u \|_H ^2 \leq B(u,u) \quad \forall u \in H
\end{equation*}
then $T$ is also injective and has a continuous inverse.
\end{theorem}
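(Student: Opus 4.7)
My plan is to construct $T$ by a pointwise application of the Riesz representation theorem and then to read off the three asserted properties in turn; none of the steps is deep, but care is needed in the last step to pass from injectivity to surjectivity so that $T^{-1}$ is defined on all of $H$.

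For the construction, I would fix $v \in H$ and consider the linear functional $\ell_v : u \mapsto B(u,v)$. Boundedness of $B$ gives $|\ell_v(u)| \leq \gamma \|u\|_H \|v\|_H$, so $\ell_v \in H^{\ast}$ with $\|\ell_v\| \leq \gamma \|v\|_H$. By the Riesz representation theorem there is a unique $w_v \in H$ with $\ell_v(u) = \langle u, w_v\rangle_H$ for every $u$, and I would set $Tv := w_v$. Linearity of $T$ is forced by bilinearity of $B$ in its second slot combined with the uniqueness clause of Riesz (if $\ell_{\alpha v_1 + \beta v_2} = \alpha \ell_{v_1} + \beta \ell_{v_2}$, then the representatives must satisfy the same linear relation), and boundedness is immediate from $\|Tv\|_H = \|w_v\|_H = \|\ell_v\| \leq \gamma \|v\|_H$. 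If $B$ is symmetric, then for all $u,v$,
$\langle u, Tv\rangle_H = B(u,v) = B(v,u) = \langle v, Tu\rangle_H = \langle Tu, v\rangle_H$,
which gives $T = T^{\ast}$ (using that the inner product is real-valued here).

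For the last claim, assume additionally that $B$ is positive definite. Combining this with the Cauchy--Schwarz inequality,
$\beta \|u\|_H^2 \leq B(u,u) = \langle u, Tu\rangle_H \leq \|u\|_H \|Tu\|_H$,
so $\|Tu\|_H \geq \beta \|u\|_H$ for all $u \in H$. This coercivity bound gives injectivity at once and also yields that the range of $T$ is closed: any Cauchy sequence $(Tu_n)$ forces $(u_n)$ to be Cauchy, hence convergent to some $u$, and continuity of $T$ gives $Tu_n \to Tu$. The main obstacle is upgrading ``closed range'' to ``all of $H$''. Here I would invoke self-adjointness (from the symmetric case) and apply the standard orthogonal decomposition $H = \overline{\mathrm{Range}(T)} \oplus \ker(T^{\ast}) = \mathrm{Range}(T) \oplus \ker(T) = \mathrm{Range}(T)$, using $\ker(T) = \{0\}$ together with the closedness of the range. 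Consequently $T^{-1} : H \to H$ is well defined, and rewriting $\|Tu\|_H \geq \beta \|u\|_H$ in terms of $w = Tu$ yields $\|T^{-1} w\|_H \leq \beta^{-1} \|w\|_H$, which is the continuity of the inverse.
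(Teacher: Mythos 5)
Your construction and the first three claims track the paper's proof exactly: fix $v$, apply Riesz to $\ell_v = B(\cdot,v)$, read off linearity from uniqueness of Riesz representatives, read off boundedness, and verify self-adjointness by a one-line computation using symmetry of $B$. For the last claim the paper also uses positive definiteness together with Cauchy--Schwarz to obtain the coercivity estimate, and from this concludes injectivity and continuity of $T^{-1}$. The one place you genuinely go beyond the paper is in establishing surjectivity of $T$. The paper is deliberately content with the weaker statement ``$T^{-1}$ is defined on $\mathrm{ran}(T)$ and is continuous there,'' and elsewhere (when inverting $J_B(\omega)$) it separately argues that the element to be preimaged actually lies in the range. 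You instead close this gap outright: the coercivity bound forces $\mathrm{ran}(T)$ to be closed, and then, invoking self-adjointness and $\ker T = \{0\}$, the orthogonal decomposition $H = \mathrm{ran}(T) \oplus \ker(T^{\ast})$ gives $\mathrm{ran}(T) = H$. This is a correct and worthwhile refinement, but note it relies on reading ``additionally positive definite'' as \emph{symmetric and} positive definite (so that $T = T^{\ast}$ is available); the paper's argument for the last part uses only positive definiteness and hence applies even to non-symmetric $B$, at the price of not asserting surjectivity.
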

\begin{proof}
	First let us fix $v \in H$ and consider the linear functional $\ell_v \, : \, u \mapsto B(u,v) = \ell_v (u)$. By the Riesz representation theorem there exists and element $z_v \in H$ such that $\ell_v(u) = \langle u, z_v\rangle_H$ for all $u \in H$. It remains to show that the mapping $T\, : \, v \mapsto z_v = T(v)$ is linear and bounded.\\
Consider $v + \lambda w  \in H$ for $\lambda \in \mathbb{R}$ and $v,w \in H$, then
\begin{equation*}
	\langle u, T(v+\lambda w) \rangle_H = B(u,v+\lambda w ) = B(u,v) + \lambda B(v,w) = \langle u , T(v) \rangle_H + \lambda \langle u, T(w) \rangle_H
\end{equation*}
holds for all $u \in H$. Thus $T$ is linear. The boundedness follows trivially from
\begin{equation*}
	\| Tv \|_H = \sup_{\|u\|_H = 1} |\langle u,Tv \rangle_H| =  \sup_{\|u\|_H = 1} |B(u,v)|  \leq \gamma \|v\|_H \; .
\end{equation*}
Let us assume that $B$ is symmetric, then
\begin{equation*}
	\langle u,Tv \rangle_H  = B(u,v) = B(v,u) = \langle v, Tu\rangle_H
\end{equation*}
shows that $T$ is self-adjoint.\\
Now assume that $B$ is additionally positive definite. The estimate
\begin{equation*} \label{eq:T_inj}
	\|v-w\|_H\, \|Tv - Tw\|_H \ge \langle v-w, T(v-w) \rangle_H \geq \beta \| v-w\|_H^2 >0 \qquad \text{for } v\neq w \in H
\end{equation*}
immediately implies injectivity. Thus $T^{-1}$ is defined on the range of $T$ ($\text{ran}(T)$) and it is continuous.
\begin{align*}
	\| T^{-1} u - T^{-1} v \|_H = \| T^{-1}Tx - T^{-1}Ty \|_H = \|x -y \|_H \\
 \qquad \stackrel{\eqref{eq:T_inj}}{\leq} \frac{1}{\beta} \| Tx - Ty \|_H = \frac{1}{\beta} \| u-v\|_H \qquad \forall u,v \in \text{ran}(T) \; .
\end{align*}
\end{proof}

\bibliographystyle{myalpha}
\bibliography{fkp2bib}

\end{document}